\definecolor{darkblue}{RGB}{0,0,160}
\def\eps{\varepsilon}
\def\d{{\rm d}}
\def\R {\mathbb{R}}
\def\N {\mathbb{N}}
\def\supp {{\mathrm{supp}\,}}
\def\Z {{\mathbb Z}}
\newcommand{\cic}{\bm}
\numberwithin{equation}{section}
\numberwithin{counter2}{section}
\newtheorem{proposition}[subsection]{Proposition}
\newtheorem{theorem}[counter]{Theorem}
\newtheorem{lemma}[subsection]{Lemma}
\theoremstyle{definition}
\newtheorem{definition}[subsection]{Definition}
\newtheorem*{remark*}{Remark}
\newtheorem*{warn*}{A word of warning}
\theoremstyle{plain}
\title[Fractionally modulated discrete Carleson and pointwise Ergodic Theorems]{Fractionally modulated discrete Carleson's Theorem and pointwise Ergodic Theorems along certain curves}
\author{Leonidas Daskalakis} 
\address[Leonidas Daskalakis]{Institute of Mathematics,
Polish Academy of Sciences,
\'Sniadeckich 8,
00-656 Warszawa, Poland}
\email{ldaskalakis@impan.pl}
\author[A. Fragkos]{Anastasios Fragkos}
\address[Anastasios Fragkos]{School of Mathematics\\
Georgia Institute of Technology\\
Atlanta,  GA 30332 USA}
\email{anastasiosfragkos@gatech.edu}
\begin{document}
	\maketitle
	\begin{abstract} For $c\in(1,2)$ we consider the following operators
\[
\mathcal{C}_{c}f(x) \coloneqq \sup_{\lambda \in [-1/2,1/2)}\bigg| \sum_{n \neq 0}f(x-n) \frac{e^{2\pi i\lambda \lfloor |n|^{c} \rfloor}}{n}\bigg|\text{,}\quad 	\mathcal{C}^{\mathsf{sgn}}_{c}f(x) \coloneqq \sup_{\lambda \in [-1/2,1/2)}\bigg| \sum_{n \neq 0}f(x-n) \frac{e^{2\pi i\lambda \mathsf{sign(n)} \lfloor |n|^{c} \rfloor}}{n}\bigg| \text{,}
		\]
and prove that both extend boundedly on $\ell^p(\mathbb{Z})$, $p\in(1,\infty)$. The second main result is establishing almost everywhere pointwise convergence for the following ergodic averages
		\[
		A_Nf(x)\coloneqq\frac{1}{N}\sum_{n=1}^Nf(T^nS^{\lfloor n^c\rfloor}x)\text{,}
		\]
		where $T,S\colon X\to X$ are commuting measure-preserving transformations on a $\sigma$-finite measure space $(X,\mu)$, and $f\in L_{\mu}^p(X)$, $p\in(1,\infty)$. The point of departure for both proofs is the study of exponential sums with phases $\xi_2 \lfloor |n^c|\rfloor+ \xi_1n$ through the use of a simple variant of the circle method.
	\end{abstract}
	\section{Introduction}
	We begin by stating the main theorems of the present work.
\begin{definition}
For every $c\in(1,2)$ and every finitely supported $f\colon\mathbb{Z}\to\mathbb{C}$ we define the $c$-modulated Carleson operators by
\begin{equation}\label{defCc}
\mathcal{C}_{c}f(x) \coloneqq \sup_{\lambda \in [-1/2,1/2)} \bigg| \sum_{n \neq 0}f(x-n) \frac{e^{2\pi i\lambda \lfloor |n|^{c} \rfloor}}{n} \bigg|\text{,}\quad 	\mathcal{C}^{\mathsf{sgn}}_{c}f(x) \coloneqq \sup_{\lambda \in [-1/2,1/2)}\bigg| \sum_{n \neq 0}f(x-n) \frac{e^{2\pi i\lambda \mathsf{sign}(n) \lfloor |n|^{c} \rfloor}}{n}\bigg|\text{.}
\end{equation}
\end{definition}
	\begin{theorem}\label{maintheorem}
		For every $c\in(1,2)$ and $p\in(1,\infty)$ there exists a positive constant $C_{c,p}$ such that
\begin{equation}\label{mainestimate}
\|\mathcal{C}_cf\|_{\ell^p(\mathbb{Z})}\le C_{c,p}\|f\|_{\ell^p(\mathbb{Z})}\text{,}\quad\|\mathcal{C}^{\mathsf{sgn}}_cf\|_{\ell^p(\mathbb{Z})}\le C_{c,p}\|f\|_{\ell^p(\mathbb{Z})}\text{.}
\end{equation}
\end{theorem}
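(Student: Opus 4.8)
The plan is to reduce \eqref{mainestimate}, via linearization and a dyadic decomposition of the kernel, to estimates for the symbols $m_j(\xi,\lambda)=\sum_{|n|\sim 2^j}e^{2\pi i(\lambda\lfloor|n|^c\rfloor-n\xi)}/n$, and to control these through the exponential sums with phases $\xi_2\lfloor|n|^c\rfloor+\xi_1 n$. By a standard argument it suffices to bound, uniformly over measurable $\lambda\colon\mathbb{Z}\to[-1/2,1/2)$, the linearized operator $T_{\lambda(\cdot)}f(x)=\sum_{n\neq0}f(x-n)e^{2\pi i\lambda(x)\lfloor|n|^c\rfloor}/n$ (and its $\mathsf{sgn}$ analogue) on $\ell^p(\mathbb{Z})$. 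I would decompose $T_{\lambda(\cdot)}=\sum_{j\ge0}T^j_{\lambda(\cdot)}$ with a smooth partition of unity localizing $|n|\sim 2^j$, and at each point $x$ split the scales at the threshold $J_{\lambda(x)}\sim\tfrac1c\log_2(1/|\lambda(x)|)$, below which $|\lambda(x)|2^{jc}\lesssim 1$ (the modulation is essentially inert) and above which it genuinely oscillates.

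On the inert scales the contribution is $\sum_{1\le|n|\lesssim 2^{J_{\lambda(x)}}}f(x-n)e^{2\pi i\lambda(x)\lfloor|n|^c\rfloor}/n$; since the unimodular weight $n\mapsto e^{2\pi i\lambda(x)\lfloor|n|^c\rfloor}$ has total variation $\lesssim|\lambda(x)|2^{J_{\lambda(x)}c}\lesssim 1$ over that range, summation by parts dominates this part pointwise, and uniformly in the linearizing function, by a constant multiple of the maximally truncated discrete Hilbert transform $\sup_{K\ge1}\bigl|\sum_{1\le|n|\le K}f(x-n)/n\bigr|$, which is bounded on $\ell^p(\mathbb{Z})$ for $1<p<\infty$. (For $\mathcal{C}^{\mathsf{sgn}}_c$ the kernel additionally contains an even component dominated by $|n|^{c-1}/|n|$ on $|n|\lesssim 2^{J_{\lambda}}$, harmless since $c>1$.)

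For the oscillatory scales I would use the announced variant of the circle method. Via the sawtooth identity $\lfloor t\rfloor=t-\tfrac12-\psi(t)$, $\psi(t)=\{t\}-\tfrac12$, together with a Vaaler-type trigonometric approximation of $\psi$ of small degree $H=2^{\delta_0 j}$ ($\delta_0>0$ small; the approximation error is summable against the $O(1)$-norm kernel $\mathbf{1}_{|n|\sim 2^j}/|n|$ because $\{|n|^c\}$ equidistributes quantitatively), one rewrites $e^{2\pi i\lambda\lfloor|n|^c\rfloor}$ as a short, absolutely convergent sum of genuine oscillatory weights $e^{2\pi i(\lambda+r)|n|^c}$ with $|r|\lesssim H$ and summable coefficients. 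This reduces $m_j(\xi,\lambda)$ to weighted combinations of $\sum_{|n|\sim 2^j}e^{2\pi i(\beta|n|^c-n\xi)}/n$ with $|\beta|\lesssim 2^{\delta_0 j}$; since $c\notin\mathbb{Z}$, the phase $\beta t^c$ has all derivatives of their expected size, and van der Corput's second-derivative estimate (the second derivative being $\sim|\beta|2^{j(c-2)}$, which is $\lesssim 1$ once $\delta_0<2-c$) gives $\|m_j(\cdot,\beta)\|_{L^\infty(\mathbb{T})}\lesssim 2^{(c/2-1+O(\delta_0))j}$ uniformly in such $\beta$ and in $\xi$ — a genuine power saving $\lesssim 2^{-\eta j}$, $\eta=\eta(c,\delta_0)>0$, once $\delta_0$ is chosen small. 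Interpolating this $\ell^2$-bound against the trivial estimate $\|T^j_\beta\|_{\ell^p\to\ell^p}=O(1)$ (the kernel $\mathbf{1}_{|n|\sim 2^j}/n$ has $\ell^1$-norm $O(1)$) yields $\ell^p$-decay of each oscillatory dyadic piece; no exponent-pair or Ionescu--Wainger machinery is needed, which is what keeps the circle-method input "simple".

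The delicate point — and what I expect to be the main obstacle — is that these dyadic estimates cannot be summed termwise: the single-scale maximal operator $\sup_\lambda|T^j_\lambda f|$ is only $O(1)$ on $\ell^p$ and does not decay (for fixed $x$, $\lambda\mapsto T^j_\lambda f(x)$ is a trigonometric polynomial whose spectrum, of cardinality $\sim 2^j$, lies in $[0,2^{jc}]$, so its supremum is comparable, for generic $f$, to the local $\ell^2$-average of $f$), whence $\sum_j\sup_\lambda|T^j_\lambda f|$ diverges. One must therefore perform the supremum over $\lambda$ and the summation over scales simultaneously, by an almost-orthogonality argument: after the circle-method reduction $T^j_{\lambda(\cdot)}$ coincides, up to errors summable in $j$ uniformly in the linearizing function, with a Fourier-multiplier operator whose symbol is localized to a frequency region pinned to the scale $2^j$, so distinct scales are genuinely almost orthogonal, and the assembled main term is an honest maximally modulated singular integral along the curve $t\mapsto\lfloor|t|^c\rfloor$, whose $\ell^p$-boundedness is obtained by adapting the treatment of the continuous oscillatory singular integral $\sup_\lambda|\mathrm{p.v.}\int f(x-t)e^{2\pi i\lambda|t|^c}\,\mathrm{d}t/t|$ of Stein--Wainger type — which on $\mathbb{R}$ reduces by rescaling to a single Calderón--Zygmund operator and needs no Carleson-type time-frequency analysis. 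Assembling all of this, uniformly in $\lambda(\cdot)$, and then passing from the $\ell^2$ estimate to the full range $p\in(1,\infty)$ by Marcinkiewicz interpolation, yields \eqref{mainestimate}; the curvature of $t\mapsto t^c$ for non-integer $c$ is precisely what renders the problem softer than genuine discrete Carleson.
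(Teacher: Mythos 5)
Your overall architecture (dyadic decomposition of the kernel, floor removal via the Fourier expansion of $e(-x\{y\})$, van der Corput at each scale, and reduction of the main term to the continuous Stein--Wainger/Guo theory) matches the paper's, and you correctly identify the central obstacle: single-scale symbol bounds cannot be summed because the supremum over $\lambda$ sits inside the sum over scales. But your resolution of that obstacle is asserted rather than proved, and this is exactly where the paper's real work lies. Two concrete problems. First, the claimed uniform power saving $\|m_j(\cdot,\beta)\|_{L^\infty(\mathbb{T})}\lesssim 2^{(c/2-1+O(\delta_0))j}$ is false near the lower end of your oscillatory range: the second-derivative test gives $|\beta|^{1/2}2^{j(c/2-1)}+|\beta|^{-1/2}2^{-jc/2}$ after dividing out the kernel size $2^{-j}$, and at $|\beta|\sim 2^{-jc}$ (i.e.\ $|\lambda|2^{jc}\sim 1$, the boundary of your inert region) the second term is $O(1)$; so there is a transition zone $1\lesssim|\lambda|2^{jc}\lesssim 2^{\sigma j}$ in which neither your summation-by-parts argument nor van der Corput yields any decay. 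Second, and more seriously, even where a symbol bound $\sup_{\xi,\lambda}|E_j(\xi,\lambda)|\lesssim 2^{-\eta j}$ does hold, it does not bound the linearized operator $f\mapsto\big(\hat f\,E_j(\cdot,\lambda(x))\big)^{\vee}(x)$ on $\ell^2$ uniformly in the measurable choice $\lambda(\cdot)$ --- this is the whole difficulty of maximal modulation --- and ``almost orthogonality of scales'' does not supply it: the significant frequency support of $m_j(\cdot,\lambda)$ is a box around the origin that shrinks with $j$, so the different scales all pile up near $(0,0)$ rather than being orthogonal, and the assembled main term is Guo's (nontrivial) continuous theorem, not a consequence of a Cotlar--Stein argument.

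The paper closes this gap with two devices for which your proposal offers no substitute: (a) a genuinely maximal $TT^*$ argument (Propositions~\ref{awayfrom0} and~\ref{keyptwiseest}), bounding the kernel $K_{\tau,j}(x,y)=\sum_m e(\lambda_x|x-m|^c-\lambda_y|y-m|^c)\psi_j^{\tau}(x-m)\psi_j^{\tau}(y-m)$ for an arbitrary measurable linearization $x\mapsto\lambda_x$, which discards all $|\lambda|>2^{(-c+\nu)j}$ with a power saving in $j$; and (b) for the remaining interval $X_j=\{|\lambda|\le 2^{(-c+\nu)j}\}$, a Sobolev embedding in $\lambda$ (Lemma~\ref{sobolev}) applied to $E_j=m_j-H_j$, which requires not only the sup bound $O(2^{-\varepsilon j/4})$ but also the derivative bound $|\partial_\lambda E_j|\lesssim 2^{cj}$ and the smallness of $|X_j|$, the product $2^{-\varepsilon j/4}\cdot 2^{cj}\cdot 2^{(-c+\nu)j}$ being summable only because $\nu<\varepsilon/10$. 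Without reproducing (a) and (b), or some equivalent mechanism that beats the supremum over $\lambda$, the argument as written is incomplete precisely at the step you yourself flag as ``the main obstacle.''
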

In 1995 Stein \cite{steinquadratic} proved that the purely quadratic Carleson operator defined by
\[
\mathcal{C}_2 f(x) \coloneqq \sup_{\lambda \in \R} \bigg| \int_{\R}f(x-t)\frac{e^{2\pi i \lambda t^2}}{t} \d t\bigg|
\]
is bounded on $L^2(\R)$. Subsequently, Guo \cite{GuoOscillatory} generalized the above result by establishing boundedness on $L^p(\mathbb{R})$ for the following operators
\[
\mathcal{C}^{\mathsf{even}}_{\eps}f(x) \coloneqq \sup_{\lambda \in \R} \bigg| \int_{\R}f(x-t) \frac{e^{2\pi i \lambda |t|^\eps}}{t} \d t \bigg|,\quad \mathcal{C}^{\mathsf{odd}}_{\eps}f(x) \coloneqq \sup_{\lambda \in \R} \bigg| \int_{\R}f(x-t) \frac{e^{2\pi i\lambda \mathsf{sign}(t) |t|^\eps}}{t} \d t \bigg|. \] This line of investigation has attracted attention, and since then, variational bounds have been obtained even for higher dimensional analogues, see for example \cite{Guo2020} or \cite{wan2024sharpvariationalinequalitiesoscillatory}, and the natural question regarding the corresponding discrete analogues for more general phases has been raised.
		
		Lillian Pierce conjectured that the discrete analogue of $\mathcal{C}_2$ is bounded on $\ell^2(\Z).$ The first result in this direction was by Krause and Lacey \cite{KrauseLacey16} where the $\ell^2(\Z)$-boundedness was verified under the restriction that the set of modulations has sufficiently small arithmetic Minkowski dimension. Subsequently, this restriction was dropped, the result was generalized in higher dimensions and in the multiparameter setting \cites{JB,KrauseRoos22,Krause24} with the $\ell^p$ case settled as well, for $p\in(1,\infty)$.
		
		Theorem~$\ref{maintheorem}$ is a certain discrete analogue of Guo's result and the $c$-modulated Carleson operators in \eqref{defCc} can be viewed as intermediate operators between the discrete Carleson operator
\[
f \mapsto \sup_{\lambda \in [-1/2,1/2)}\bigg|\sum_{n \neq 0}f(\cdot-n) \frac{e^{2\pi i \lambda n }}{n} \bigg|
\]
and the discrete quadratic Carleson operator
\[
f \mapsto \sup_{\lambda \in [-1/2,1/2)} \bigg|\sum_{n \neq 0}f(\cdot-n) \frac{e^{2\pi i \lambda n^2}}{n}\bigg|.
\]

Understanding the behavior of the operators \eqref{defCc} naturally leads to the study of exponential sums with phases $\xi_2 \lfloor |n^c|\rfloor+ \xi_1n$. Exponential sums of such flavor are frequently encountered in the context of pointwise convergence problems in ergodic theory and discrete harmonic analysis, and our analysis allowed us to establish the following pointwise ergodic theorem, which is the second main result of the present work.
	\begin{theorem}\label{PET}
		Assume $c\in(1,2)$ and let $(X,\mathcal{B},\mu)$ be a $\sigma$-finite measure space equipped with two invertible measure-preserving transformations $T,S\colon X\to X$ which commute. For every $p\in(1,\infty)$ and every $f\in L^p_{\mu}(X)$ we have that
		\[
		\lim_{N\to\infty}\frac{1}{N}\sum_{n=1}^N f(T^nS^{\lfloor n^c\rfloor}x)\text{ exists for $\mu$-a.e. }x\in X\text{.}
		\] 
	\end{theorem}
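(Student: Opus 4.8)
\emph{Overall plan.} The plan is to follow the by-now-standard route for discrete pointwise ergodic theorems (going back to Bourgain): first apply the Calder\'on transference principle to reduce matters to the integer model, then establish a maximal and an $r$-variational inequality there via the circle-method analysis of the exponential sums with phase $\xi_2\lfloor n^c\rfloor+\xi_1 n$ carried out earlier in this paper. Concretely, since $T$ and $S$ commute and are invertible, $(k_1,k_2)\mapsto T^{k_1}S^{k_2}$ is a measure-preserving $\mathbb Z^2$-action, and $A_Nf$ is the transference to $(X,\mu)$ of the model operator
\[
\widetilde A_N g(x)\ :=\ \frac1N\sum_{n=1}^N g\big(x_1-n,\ x_2-\lfloor n^c\rfloor\big),\qquad x=(x_1,x_2)\in\mathbb Z^2 .
\]
So I would aim to prove, on $\ell^p(\mathbb Z^2)$ for every $p\in(1,\infty)$, the maximal inequality $\big\|\sup_{N\ge1}|\widetilde A_Ng|\big\|_{\ell^p}\lesssim\|g\|_{\ell^p}$ together with the $r$-variational inequality $\big\|\,\big\|(\widetilde A_Ng)_{N\ge1}\big\|_{V^r}\,\big\|_{\ell^p}\lesssim\|g\|_{\ell^p}$ for some finite $r=r(p)$; after transference the variational inequality alone forces $N\mapsto A_Nf(x)$ to have finite $r$-variation for $\mu$-a.e.\ $x\in X$, hence to converge. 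As usual I would split the $r$-variation into the long variation over the lacunary scales $N=2^j$ and the short variations $\big(\sum_j\sup_{2^j\le N<2^{j+1}}|\widetilde A_Ng-\widetilde A_{2^j}g|^2\big)^{1/2}$; the short variations I expect to handle by standard methods (they are easier than the long variation), leaving the maximal and long-variational control of the lacunary family $(\widetilde A_{2^j})_j$.

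\emph{Multiplier decomposition.} For that, note that $\widetilde A_N$ is the Fourier multiplier operator with symbol $m_N(\xi)=\frac1N\sum_{n=1}^N e\big(n\xi_1+\lfloor n^c\rfloor\xi_2\big)$, $\xi=(\xi_1,\xi_2)\in\mathbb T^2$, $e(t):=e^{2\pi i t}$. I would use the circle-method estimates of this paper to split $m_N=\mathfrak M_N+\mathcal E_N$ with a power-saving error bound $\sup_{\xi\in\mathbb T^2}|\mathcal E_N(\xi)|\lesssim N^{-\delta}$ for some $\delta=\delta(c)>0$; since $\mathcal E_N$ also has a convolution kernel of bounded $\ell^1$-norm, this makes its contribution to the maximal and long-variational norms summable over the scales $2^j$, for every $p\in(1,\infty)$, hence harmless. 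The main part $\mathfrak M_N$ should be a fixed smooth cutoff localizing $\xi$ to the neighborhood of the origin of dimensions $\sim N^{-1}$ in $\xi_1$ and $\sim N^{-c}$ in $\xi_2$, multiplied by the continuous symbol
\[
\Phi_N(\xi)\ :=\ \frac1N\int_0^N e\big(t\xi_1+t^c\xi_2\big)\,\d t .
\]
The key structural point is that for $c\in(1,2)$ there is no major arc other than this neighborhood of the origin: the linear exponential sums $\sum_{n\le N}e(n\xi_1)$ are uniformly bounded away from $\xi_1=0$, so partial summation disposes of them, while the curvature of $t\mapsto t^c$ controls the rest via van der Corput --- so, unlike in the Waring (integer-power) regime, no Ionescu--Wainger-type arithmetic input is needed.

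\emph{The main term.} The main term I would treat by comparison with the continuous averaging operator $\mathcal A_Nf(x)=\frac1N\int_0^N f\big(x_1-t,\ x_2-t^c\big)\,\d t$ on $\mathbb R^2$, whose Fourier multiplier is exactly $\Phi_N$. The curve $t\mapsto(t,t^c)$ is non-degenerate on $(0,\infty)$ --- the vectors $(1,ct^{c-1})$ and $(0,c(c-1)t^{c-2})$ are linearly independent, since $c(c-1)\ne0$ --- with only a mild, integrable degeneracy at $t=0$, so $(\mathcal A_N)_N$ satisfies the maximal and $r$-variational bounds on $L^p(\mathbb R^2)$ for every $p\in(1,\infty)$ and $r>2$, by the classical square-function/local-smoothing theory for averages along well-curved curves. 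I would then transfer these bounds to $\ell^p(\mathbb Z^2)$ by the Magyar--Stein--Wainger sampling principle, using the frequency localization built into $\mathfrak M_N$, obtaining the maximal and long-variational inequalities for $(\mathfrak M_{2^j})_j$ with constants independent of the number of scales. Together with the error estimate this closes the maximal and variational inequalities for $(\widetilde A_{2^j})_j$, and transference concludes the theorem. (In passing, $m_N\to\mathbf 1_{\{(0,0)\}}$ pointwise on $\mathbb T^2$, so the limit of $A_Nf$ is the projection of $f$ onto the functions invariant under both $T$ and $S$.)

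\emph{Main obstacle.} The step I expect to be the main obstacle is the circle-method input of the second paragraph: proving the minor-arc bound $|m_N(\xi)|\lesssim N^{-\delta}$ with a genuine power saving, uniformly in $\xi$ outside the origin neighborhood, and identifying $\mathfrak M_N$ as a smooth cutoff times $\Phi_N$ there, in the presence of the non-polynomial and non-smooth phase $\lfloor n^c\rfloor$; for $c$ close to $2$ the requisite van der Corput and equidistribution estimates for $n^c$ (and for $\lfloor n^c\rfloor$) are the delicate ingredient. Everything else --- the transference, the continuous variational theory for averages along $(t,t^c)$, the sampling principle, and the long/short-variation bookkeeping --- I expect to be standard, if technical.
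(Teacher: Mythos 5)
Your proposal matches the paper's argument in all essentials: Calder\'on transference to the $\mathbb{Z}^2$ shift system, a circle-method approximation of the multiplier $\tfrac1N\sum_{n\le N}e(n\xi_1+\lfloor n^c\rfloor\xi_2)$ by a smooth cutoff times $\tfrac1N\int_0^N e(t\xi_1+t^c\xi_2)\,\mathrm{d}t$ with a uniform power-saving error (the origin being the only major box, via van der Corput and the floor-removal Fourier expansion), the Jones--Seeger--Wright variational bounds for averages along $(t,t^c)$, and the Magyar--Stein--Wainger sampling principle, with interpolation against trivial bounds for $p\neq 2$. The only difference is cosmetic bookkeeping at the end: rather than a full $r$-variation including short variations, the paper proves the $\ell^p$ maximal inequality plus the lacunary $r$-variation along $\lfloor\lambda^n\rfloor$ on $\ell^2$ for every $\lambda>1$, which suffices by the standard oscillation argument for Ces\`aro-type averages.
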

After Bourgain's seminal work \cite{bg2,bg3,BET} on pointwise ergodic theorems along polynomial and prime orbits there has been an extensive effort in trying to understand pointwise phenomena in ergodic theory along sparse orbits. The case for the averages taken along the orbits $\big(\lfloor n^c\rfloor\big)_{n\in\mathbb{N}}$ for $c\in(1,\infty)\setminus\mathbb{N}$ and $f\in L^p$, $p\in(1,\infty)$, has been settled in \cite{ncfull} and for exponents close to $1$ one has pointwise convergence even on $L^1$, see \cite{UrbanZienkiewicz2007,Mirek2015}. Theorem~$\ref{PET}$ should be understood as a variant of such results where the averaging is taken along the curve $\big(n,\lfloor n^c\rfloor\big)_{n\in\mathbb{N}}$.
\subsection{Strategy of our proofs.} Here we wish to give a brief description of the strategy of our proofs. Although the proof of Theorem~$\ref{maintheorem}$ is much more intricate than that of Theorem~$\ref{PET}$, the philosophy underlying both treatments is the same.

For the proof of Theorem~$\ref{PET}$ Calder\'on's transference principle and standard considerations suggest it suffices to establish $\ell^p$-estimates for the maximal function corresponding to our ergodic averages  considered on the integer shift system as well as $r$-variation estimates on $\ell^2$ along lacunary sequences for any $r>2$, see Theorem~$\ref{quantestpoint}$.

We begin by approximating the corresponding multipliers by their continuous counterparts, see Proposition~$\ref{multapproxforpetclean}$. To carry out this approximation we employ a simple variant of the circle method where the main contribution comes from all the frequencies in a box around the frequencies $(0,0)$. This is typical for non-polynomial phases since they do not introduce any arithmeticity and their treatment is much more straightforward than the corresponding one for $c\in\mathbb{N}$. The main tools used are van der Corput-type estimates and certain floor-removing techniques relying on a famous Fourier series with uniform bounds on the tail, see for example the proof of Proposition~$\ref{expsummin}$, or page 29 in \cite{LPE}. Notably, usually such considerations impose restrictions on the largeness of $c$, but we manage to get, in some sense, the optimal range $(1,2)$. If one tries to naively extend our approximations for $c\in(1,\infty)\setminus\mathbb{N}$ using van der Corput for higher order derivatives, for example Theorem~8.20 in \cite{IK}, the numerology makes the argument fail since such estimates are much weaker than their continuous counterpart. 

After performing the approximation, one may exploit the continuous counterparts of these estimates which have already been established, see Theorem~1.5 in \cite{jim}, by transfering these estimates appropriately to the discrete setting using a general sampling principle, see Proposition~2.1 from \cite{TransferenceLp} and Proposition~3.1 in \cite{MVV}. Such considerations give the desired $\ell^2$-estimates relatively straightforwardly, and to obtain the $\ell^p$-maximal estimates we use the fact that the trivial estimates hold since we are dealing with averaging operators and Riesz-Thorin interpolation with the $\ell^2$-estimates to conclude.  

The proof of Theorem~$\ref{maintheorem}$ is much more delicate. We start by decomposing our kernel smoothly to dyadic pieces, see \eqref{smoothdyad} and then approximate each piece with its continuous counterpart with an argument analogous to the one discussed above, see Proposition~$\ref{approxcom}$.

Ultimately, the same sampling principle allow us to appropriately transfer the continuous analogues of this estimate, see Theorem~$\ref{Guo}$, although this is not as straightforward as before.  

We will use a Sobolev embedding lemma,  but since we do not have any nontrivial control of the derivatives of our error terms, to make the argument work one needs to discard almost all $\lambda$-frequencies before appealing to this lemma. We show that this is possible using a rather delicate $TT^*$ argument. 

Similarly to the proof of Theorem~$\ref{PET}$, the argument relies on $\ell^2$-techniques and our error terms when addressing the $\ell^p$-case are handled via interpolation with $\ell^2$-estimates.

We have organized the paper as follows: in Section~2 we decompose our kernel to dyadic pieces and establish certain exponential sum estimates used for the employment of the circle method, in Section~3 we perform the circle method, in Section~4 we give a short proof of the pointwise ergodic theorem, in Section~5  we perform the aforementioned elaborate $TT^*$ argument allowing us to discard almost all $\lambda$-frequencies for the proof of Theorem~$\ref{maintheorem}$ and we use the Sobolev embedding lemma reducing our proof to the treatment of the main term of our approximation, and in the final section we discuss how one may use the continuous theory to handle the main term.

\subsection{Short remarks on the nonuniformity of the constants}
Motivated by the atypically large, in some sense optimal for such arguments, range of exponents $c$ for which Theorem~$\ref{maintheorem}$ is established, we wish to make some brief comments on the constant $C_{c,p}$ in the estimates \eqref{mainestimate}. Theorem~$\ref{maintheorem}$ holds for $c\in[1,2]$, with the case $c=1$ corresponding essentially to the $L^p$-boundedness of the Carleson operator and the case $c=2$ to the $\ell^2$-boundness of the discrete quadratic Carleson operator. 

It will be clear that the constant $C_{c,p}$ for Theorem~$\ref{maintheorem}$ guaranteed by our proof blows up as $c\to 1^+$ or $c\to 2^-$. At the expense of making our estimates slightly more cumbersome, we have quantified this nonuniformity for $p=2\footnote{As discussed in the previous subsection the constant $C_{c,p}$ for $p\neq 2$ is obtained through various interpolative arguments with $\ell^2$-estimates, and the blowup will propagate accordingly. For the sake of keeping the exposition more reasonable, and since we believe the blowup \eqref{finalblowup} is far from optimal, we have confined ourselves in quantifying the failure of nonuniformity with respect to $c\in(1,2)$ only for the case $p=2$.}$, and the more precise estimate our argument affords us amounts to 
\begin{equation}\label{finalblowup}
C_{c,2}\lesssim (c-1)^{-\frac{1}{2}}(2-c)^{-\frac{13}{4}}\text{.} 
\end{equation}
where the implied constant is absolute. Both blowups are imposed by the method essentially due to applications of van den Corput lemma, either for trigonometric sums or oscillatory integrals. 

This is not suprising since the proposed method here exploits the range $c\in(1,2)$ in two different ways. On the hand, the so-called lack of arithmetical obstructions for the exponential sums with phases $(\xi_2 \lfloor |n|^c\rfloor+\xi_1 n)_{n\in\{1,\dotsc,N\}}$ for $c\in(1,2)$ allows us to compare the corresponding operators with their continuous counterpart without additional number-theoretic considerations, namely the appearence of certain multi-frequency phenomena and Gauss sums, present for $c=2$. On the other hand, since $c>1$, a certain kind of curvature is introduced, and modulation-invariance phenomena which ought to be addressed for the case $c=1$, yielding the Carleson operator, are avoided.

A rather interesting natural question is whether the constant of Theorem~$\ref{maintheorem}$ can be chosen independently of $c\in[1,2]$. An analysis affording uniformity in the estimates of such strength most likely should not only take into account the way in which arithmetical obstructions appear as $c\to 2^{-}$ in a very quantitative manner, but also establish the continuous analogue of the result, namely, the main result from \cite{GuoOscillatory}, with uniform constants for $c\in[1,2]$, an endeavor closely realated to Carleson's theorem, see also Question~1.4 in \cite{Guo Oscillatory}. Both tasks seem to go beyond the current technology and certainly beyond the methods of the present work. 

\subsection{Notation}If $A,B$ are two non-negative quantities, we write $A\lesssim B$ or $B \gtrsim A$  to denote that there exists an absolute positive constant $C$ such that $A\le C B$. We write $A\simeq B$ to denote that $A\lesssim B$ and $A\gtrsim B$. In the following sections we write $e(x)$ instead of $e^{2\pi i x}$. For every $r\in\mathbb{R}$ we let $\mathbb{N}_{\ge r}\coloneqq \mathbb{N}\cap[r,\infty)$. We denote the torus $\mathbb{R}/\mathbb{Z}$ by $\mathbb{T}$ and as usual we identify it with $[-1/2,1/2)$, so that $\|\xi\|=|\xi|$, where $\|\xi\|=\min\{|\xi-n|:\,n\in\mathbb{Z}\}$. Finally, for any $d\in\mathbb{N}$ and any finitely supported function $f\colon \mathbb{Z}^d\to\mathbb{C}$ we define the Fourier transform of $f$ by
\[
\mathcal{F}_{\mathbb{Z}^d}f(\xi)=\hat{f}(\xi)=\sum_{n\in\mathbb{Z}^d}f(n)e(-\xi\cdot n)\text{,}\quad\xi\in\mathbb{T}^d\text{,}
\] 
and for any $g\in L^1(\mathbb{T}^d)$ we define the inverse Fourier transform by
\[
\mathcal{F}_{\mathbb{Z}^d}^{-1}g(x)=g^{\vee}(x)=\int_{\mathbb{T}^d}g(\xi)e(\xi\cdot x) \d \xi\text{,}\quad x\in\mathbb{Z}^d\text{.} 
\]
By orthogonality for any finitely supported $f\colon\mathbb{Z}^d\to\mathbb{C}$ we get $(\hat{f})^{\vee}=f$. For any $f\in L^1(\mathbb{R}^d)$ we define the Fourier transform and the inverse Fourier transform of $f$ by
\[
\mathcal{F}_{\mathbb{R}^d}[f](\xi)=\int_{\mathbb{R}^d}f(x)e(-\xi\cdot x)\d x
\quad\text{and}\quad\mathcal{F}^{-1}_{\mathbb{R}^d}[f](x)=\int_{\mathbb{R}^d}f(\xi)e(\xi\cdot x)\d\xi\quad\text{ respectively.}
\]
For every $r\in[1,\infty)$, $\mathbb{I}\subseteq \mathbb{N}$ with $|\mathbb{I}|\ge 2$, and every family of complex-valued functions $(a_t(x):\,t\in\mathbb{I})$, we define the $r$-variation seminorms by
\[
V^r(a_t(x):\,t\in\mathbb{I})\coloneqq\sup_{J\in\mathbb{N}}\sup_{\substack{t_0<\dots<t_J\\t_j\in\mathbb{I}}}\bigg(\sum_{j=0}^{J-1}|a_{t_{j+1}}(x)-a_{t_{j}}(x)|^r\bigg)^{1/r}\text{.}
\]
Finally, to make the notation less cumbersome and to treat $\mathcal{C}_c$ and $\mathcal{C}_c^{\mathsf{sgn}}$ simultaneously, we let 
\[ 
[t]_{\mathsf{i}}^c=\mathsf{sign}(t)^{\mathsf{i}}|t|^c=\left\{
\begin{array}{ll}
      |t|^c\text{,} & \text{if }\mathsf{i}=0 \\
      \mathsf{sign}(t)|t|^c\text{,} &\text{if }\mathsf{i}=1\text{.}\\
\end{array} 
\right. 
\]
\section*{Acknowledgment}We would like to thank Michael Lacey and Mariusz Mirek for their constant support and helpful discussions. The authors are also grateful to the referee for their careful reading of the manuscript and useful remarks that led to the improvement of the presentation.
\section{Kernel Decomposition and Exponential Sum Estimates}
For any $j\in\mathbb{N}$, let $\psi_j\in\mathcal{C}^{\infty}(\mathbb{R})$ be such that $\sum_{j\in\mathbb{N}}\psi_j(x)=\frac{1}{x}$ for all $|x|\ge 1$, $\psi_j$ odd and $\supp(\psi_j)\subseteq [-2^{j-1},-2^{j-3}]\cup[2^{j-3},2^{j-1}]$. This can be easily achieved by fixing an even smooth bump function $\varphi$ with $\cic{1}_{[-1/4,1/4]}\le\varphi\le \cic{1}_{[-1/2,1/2]}$ and letting $\psi_j(x)=\frac{1}{x}\big(\varphi(2^{-j}x)-\varphi(2^{-j+1}x)\big)$. Also, for $\mathsf{i} \in \left\{0,1\right\}$ let
\begin{equation}\label{smoothdyad}
	m_j^{\mathsf{i}}(\xi,\lambda):=\sum_{n\neq 0}e(\lambda \mathsf{sign}(n)^{\mathsf{i}} \lfloor |n|^c \rfloor-\xi n)\psi_j(n)\text{,}\quad\lambda,\xi\in\mathbb{T}\text{.} 
\end{equation}
Through standard approximation arguments, one sees that it suffices to establish \eqref{mainestimate} for finitely supported functions and from now on we always work with such functions. Fourier inversion yields
\begin{multline}
\sum_{n \neq 0}f(x-n) \frac{e(\lambda \mathsf{sign}(n)^{\mathsf{i}}  \lfloor |n|^{c} \rfloor )}{n}= \int_{\mathbb T} \widehat{f}(\xi) e(\xi x) \left( \sum_{n \neq 0} \frac{e(\lambda \mathsf{sign}(n)^{\mathsf{i}}  \lfloor |n|^{c} \rfloor- \xi n )}{n} \right) \d  \xi
\\
=\sum_{j \in\mathbb{N}} \int_{\mathbb{T}}\widehat{f}(\xi)e(\xi x)  \left(\sum_{n \neq 0} e(\lambda \mathsf{sign}(n)^{\mathsf{i}}  \lfloor |n|^{c} \rfloor-\xi n ) \psi_j(n)  \right) \d \xi=\sum_{j \in\mathbb{N}} \int_{\mathbb{T}} \widehat{f}(\xi) e(\xi x)m_j^{\mathsf{i}}(\xi,\lambda) \d \xi \text{.}
\end{multline}
We now establish certain exponential sum estimates which allow us to carry out the circle method and specifically treat the minor arcs appropriately.
\begin{proposition}\label{expsummin}
	There exists an absolute positive constant $C$ such that for every $c\in(1,2)$ and all $N,M\in\mathbb{N}$ and $\xi_1,\xi_2\in[-1/2,1/2)\setminus\{0\}$ we have
\begin{multline}\label{estwithc}
	\Big|\sum_{n=1}^Ne(\xi_2\lfloor n^c\rfloor+\xi_1 n)\Big|\le C(2-c)^{-1}(c-1)^{-\frac{1}{2}}\cdot
\\
\cdot\min\big\{\big(1+\log(M)\big)\big(NM^{-1}+N^{\frac{c}{2}}M^{\frac{1}{2}}+N^{1-\frac{c}{2}}|\xi_2|^{-\frac{1}{2}  }\big),|\xi_1|^{-1}(N^{c}|\xi_2|+1)\big\}\text{.}
\end{multline}
\end{proposition}
\begin{proof}
	We begin with the first bound. We estimate dyadic parts of the sum: let $P,P'\in\mathbb{N}_{\ge 2}$ be such that $P'\in[P,2P]$, then
	\[
	\bigg|\sum_{P\le n\le P'}e(\xi_2 \lfloor n^c\rfloor+\xi_1 n) \bigg|=\bigg|\sum_{P\le n\le P'}e(\xi_2 n^c+\xi_1 n)e(-\xi_2\{n^c\}) \bigg|\text{.}
	\] 
	We have that for any $M\in\mathbb{N}$, $0<|x|\le 1/2$ and $y\in\mathbb{R}$ 
	\begin{equation}\label{floorapprox}
		e(-x\{y\})=\sum_{|m|\le M}c_m(x)e(my)+O\bigg(\min\bigg\{1,\frac{1}{M\|y\|}\bigg\}\bigg)\text{,}\quad\text{where}\quad c_m(x)=\frac{1-e(-x)}{2\pi i (x+m)}
	\end{equation}
	and the implied constant is absolute, see page 29 in \cite{LPE}. For convenience, let 
	\[
	g_{M,\xi_2}(n)=e(-\xi_2\{n^{c}\})- \sum_{|m|\le M}c_m(\xi_2)e(mn^c)=O\bigg(\min\bigg\{1,\frac{1}{M\|n^c\|}\bigg\}\bigg)\text{,}
	\]
	and note that
	\begin{multline}\label{Interterms}
	\bigg|\sum_{P\le n\le P'}e (\xi_2 \lfloor n^c\rfloor+\xi_1 n) \bigg|
	\\
		\le\bigg|\sum_{P\le n\le P'}g_{M,\xi_2}(n)e(\xi_2 n^c+\xi_1 n)\bigg|+\bigg|\sum_{P\le n\le P'}e(\xi_2 n^c+\xi_1 n)\sum_{|m|\le M}c_m(\xi_2)e(mn^c)\bigg|\text{.}
	\end{multline}
	Following \cite{LPE}, we define in an identical manner the following quantities
	\[
	U_{P,P'}(t,\xi)=\bigg|\sum_{P\le n\le P'}e(tn^c+\xi n)\bigg|\text{,}\quad\text{for $t\in\mathbb{R}$ and $\xi\in[-1/2,1/2)$}
	\]
	and
	\[
	V_{P,P'}(M)=\sum_{P\le n\le P'}\min\Big\{1,\frac{1}{M\|n^c\|}\Big\}\text{,}\quad\text{for $M\in\mathbb{N}$ }\text{.}
\]
Applying van der Corput lemma, see for example Corollary~8.13 in \cite{IK}, yields that
\begin{equation}
U_{P,P'}(t,\xi)\lesssim P^{\frac{c}{2}}|t|^{\frac{1}{2}}+(c-1)^{-\frac{1}{2}}P^{1-\frac{c}{2}}|t|^{-\frac{1}{2}}\text{,}
\end{equation}
which we may use in an identical manner as in the proof of Lemma~4.4 in \cite{LPE} to obtain
\begin{equation}\label{mindyadic}
V_{P,P'}(M)\lesssim (c-1)^{-\frac{1}{2}}(1+\log M)\big(M^{-1}P+P^{\frac{c}{2}}M^{\frac{1}{2}}\big)\text{.}
\end{equation}
For the first summand in the second line of \eqref{Interterms} we immediately see that
	\[
	\bigg|\sum_{P\le n\le P'}g_{M,\xi_2}(n)e(\xi_2 n^c+\xi_1 n)\bigg|\lesssim V_{P,P'}(M)\lesssim  (c-1)^{-\frac{1}{2}}(1+\log M)(M^{-1}P+P^{\frac{c}{2}}M^{\frac{1}{2}})\text{,}
	\]
and for the second summand we note that
	\begin{multline}
	\bigg|\sum_{P\le n\le P'}e(\xi_2 n^c+\xi_1 n)\sum_{|m|\le M}c_m(\xi_2)e(mn^c)\bigg|=\bigg|\sum_{|m|\le M}c_m(\xi_2)\sum_{P\le n\le P'}e((\xi_2+m) n^c+\xi_1 n)\bigg|
	\\
	\le \sum_{|m|\le M}|c_m(\xi_2)|U_{P,P'}(\xi_2+m,\xi_1)	\le	\bigg(\sum_{|m|\le M}|c_m(\xi_2)|\bigg)\sup_{|\xi_2|\le |t|\le M+1/2}U_{P,P'}(t,\xi_1)
\\
\lesssim(1+\log M)\sup_{|\xi_2|\le |t|\le M+1/2}\Big( P^{\frac{c}{2}}|t|^{\frac{1}{2}}+(c-1)^{-\frac{1}{2}}P^{1-\frac{c}{2}}|t|^{-\frac{1}{2}}\Big)	\lesssim
	(c-1)^{-\frac{1}{2}}(1+\log M)\Big( P^{\frac{c}{2}}M^{\frac{1}{2}}+P^{1-\frac{c}{2}}|\xi_2|^{-\frac{1}{2}}\Big)\text{.}
	\end{multline}
Combining these estimes yields
	\begin{equation}\label{dyadfinest}
	\bigg|\sum_{P\le n\le P'}e (\xi_2 \lfloor n^c\rfloor+\xi_1 n) \bigg|\lesssim (c-1)^{-\frac{1}{2}} (1+\log M)\Big( M^{-1}P+P^{\frac{c}{2}}M^{\frac{1}{2}}+P^{1-\frac{c}{2}}|\xi_2|^{-\frac{1}{2}}\Big)\text{.}
	\end{equation}
	For the full sum note that
	\begin{multline}
	\bigg|\sum_{1\le n\le N}e (\xi_2 \lfloor n^c\rfloor+\xi_1 n) \bigg|\lesssim1+\sum_{l=1}^{\lfloor\log_2N\rfloor}\Big|\sum_{2^l \le n\le \min(2^{l+1}-1,N)}e(\xi_2 \lfloor n^c\rfloor+\xi_1 n)\Big|
	\\
	\lesssim 1+(c-1)^{-\frac{1}{2}}\sum_{l=0}^{\lfloor \log_2N\rfloor } (1+\log M) (M^{-1}2^l+2^{lc/2}M^{1/2}+2^{l(1-c/2)}|\xi_2|^{-1/2})
	\\
	\lesssim
\frac{1}{(2-c)\sqrt{c-1}}\cdot(1+\log M)\big(NM^{-1}+N^{\frac{c}{2}}M^{1/2}+N^{1-\frac{c}{2}}|\xi_2|^{-1/2}\big)\text{,}
	\end{multline}
	where for the final estimate we used that for every $\theta>0$ we have 
\begin{equation}\label{sumwithconstants}
\sum_{l=0}^{\lfloor \log_2N\rfloor }2^{l\theta}=\frac{\big(2^\theta\big)^{\lfloor \log_2N\rfloor}-1}{2^{\theta}-1}\le\frac{N^{\theta}}{e^{\theta\log2}-1}\le\frac{N^{\theta}}{\theta\log(2)}\text{,}\quad\text{since $e^x-1\ge x$.}
\end{equation}

For the second estimate, summation by parts yields
	\begin{multline}\label{fortherest}
	\Big|\sum_{n=1}^Ne(\xi_2\lfloor n^c\rfloor+\xi_1 n)\Big|\le \Big|\sum_{n=1}^Ne(\xi_2\lfloor n^c\rfloor+\xi_1 n)-e(\xi_1 n)\Big|+\Big|\sum_{n=1}^Ne(\xi_1 n)\Big|
	\\
	\lesssim
	\Big|\sum_{n=1}^Ne(\xi_1 n)\big(e(\xi_2\lfloor n^c\rfloor)-1\big)\Big|+|\xi_1|^{-1}
	\\
	\lesssim
	\Big|\sum_{n=1}^Ne(\xi_1 n)\Big|\cdot|\big(e(\xi_2\lfloor N^c\rfloor)-1\big)|+\Big|\sum_{n=1}^{N-1}\sum_{k=1}^ne(\xi_1 k)\big(e(\xi_2\lfloor n^c\rfloor)-e(\xi_2\lfloor (n+1)^c\rfloor)\big)\Big|+|\xi_1|^{-1}
	\\
	\lesssim
	|\xi_1|^{-1}+\sum_{n=2}^{N}|\xi_1|^{-1}|\xi_2|\big((n+1)^c-(n-1)^c\big)\lesssim |\xi_1|^{-1}\big(1+|\xi_2|N^c\big)\text{,}\quad\text{as desired.}
	\end{multline}
\end{proof}
As an immediate corollary we obtain the following estimate.
\begin{proposition}\label{exponentialweighted} There exists an absolute positive constant $C$ such that for every $c\in(1,2)$, $\mathsf{i} \in \left\{0,1\right\}$ and all $j\in\mathbb{N}_{\ge(2-c)^{-1}}$ and $\xi_1,\xi_2\in[-1/2,1/2)\setminus\{0\}$ we have 
\begin{multline}
\Big|\sum_{n \in \Z} e(\xi_2 \mathsf{sign}(n)^{\mathsf{i}}  \lfloor |n|^c \rfloor+\xi_1 n ) \psi_j(n)\Big|
\\
\le C 2^{-j}\min\big\{(c-1)^{-\frac{1}{2}} (2-c)j\big(2^{(\frac{c+1}{3})j}+2^{(1-\frac{c}{2})j}|\xi_2|^{-\frac{1}{2}}\big),|\xi_1|^{-1}(2^{cj}|\xi_2|+1)\big\}
\text{.}
\end{multline}
\end{proposition}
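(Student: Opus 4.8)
The plan is to deduce Proposition~\ref{exponentialweighted} from Proposition~\ref{expsummin} by summation by parts against the smooth cutoff $\psi_j$, after a routine reduction to sums over the positive integers. First I would reduce to $n>0$: since $\psi_j$ is odd and supported in $\{2^{j-3}\le|n|\le 2^{j-1}\}$, splitting the sum into $n>0$ and $n<0$ and substituting $n\mapsto-n$ in the second piece shows that, for $\mathsf{i}=0$,
\[
\sum_{n\in\Z}e(\xi_2\lfloor|n|^c\rfloor+\xi_1 n)\psi_j(n)=\sum_{m\ge 1}e(\xi_2\lfloor m^c\rfloor+\xi_1 m)\psi_j(m)-\sum_{m\ge 1}e(\xi_2\lfloor m^c\rfloor-\xi_1 m)\psi_j(m),
\]
while for $\mathsf{i}=1$ the second sum is instead $\sum_{m\ge 1}e(-\xi_2\lfloor m^c\rfloor-\xi_1 m)\psi_j(m)$. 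Since the right-hand side of Proposition~\ref{expsummin} depends on $\xi_1,\xi_2$ only through $|\xi_1|,|\xi_2|$, it is enough to bound a single sum of the form $\big|\sum_{m\ge 1}e(\xi_2\lfloor m^c\rfloor+\xi_1 m)\psi_j(m)\big|$.

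Next I would apply Abel summation. Writing $S_N\coloneqq\sum_{m=1}^N e(\xi_2\lfloor m^c\rfloor+\xi_1 m)$ (so $S_0=0$), and letting $\tilde B(N)$ denote the right-hand side of Proposition~\ref{expsummin} after taking the infimum over $M\in\N$ in its first member, one has $|S_N|\le\tilde B(N)$, and $\tilde B$ is non-decreasing in $N$ (for fixed $M$ each member of the minimum is non-decreasing, using $1-\tfrac c2\in(0,\tfrac12)$ when $c\in(1,2)$; an infimum of non-decreasing functions is non-decreasing). Since $\psi_j$ is compactly supported,
\[
\sum_{m\ge 1}e(\xi_2\lfloor m^c\rfloor+\xi_1 m)\psi_j(m)=\sum_{m\ge 1}S_m\big(\psi_j(m)-\psi_j(m+1)\big),
\]
and, bounding $|\psi_j(m)-\psi_j(m+1)|\le\int_m^{m+1}|\psi_j'(t)|\,\d t\lesssim 2^{-2j}$ and noting that only $O(2^j)$ indices $m\le 2^{j-1}$ contribute,
\[
\Big|\sum_{m\ge 1}e(\xi_2\lfloor m^c\rfloor+\xi_1 m)\psi_j(m)\Big|\lesssim 2^{-2j}\cdot 2^{j}\cdot\sup_{1\le N\le 2^{j-1}}|S_N|\lesssim 2^{-j}\,\tilde B(2^{j-1}).
\]
This is precisely the mechanism packaged in Lemma~5.5 of \cite{Demeter}.

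Finally I would optimize the free parameter. Since $\tilde B(2^{j-1})\simeq\tilde B(2^j)$, the second member of the minimum already contributes $|\xi_1|^{-1}(2^{cj}|\xi_2|+1)$; for the first, choosing $M\coloneqq\lceil 2^{(2-c)j/3}\rceil\in\N$ balances $2^jM^{-1}$ against $2^{cj/2}M^{1/2}$, making both $\lesssim 2^{(c+1)j/3}$, while $1+\log M\lesssim j$ since $j\in\N$; this gives the first member $\lesssim j\big(2^{(c+1)j/3}+2^{(1-c/2)j}|\xi_2|^{-1/2}\big)$. Substituting into the last display yields the claimed estimate.

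The argument is essentially bookkeeping, so I do not expect a real obstacle; the only places requiring genuine thought are the choice of the parameter $M$, which is what produces the exponent $(c+1)/3$, together with the observation that the bound in Proposition~\ref{expsummin} is insensitive to the signs of $\xi_1$ and $\xi_2$ (needed so that it applies to the reflected sums arising in the reduction to $n>0$).
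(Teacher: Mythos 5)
Your proposal is correct and follows essentially the same route as the paper, which obtains the proposition by combining Proposition~\ref{expsummin} with the summation-by-parts mechanism of Lemma~5.5 in \cite{Demeter} (your Abel-summation computation with $\|\psi_j'\|_{L^\infty}\lesssim 2^{-2j}$ over $O(2^j)$ terms is exactly that lemma), together with the choice $M\simeq 2^{(2-c)j/3}$ that produces the exponent $(c+1)/3$.
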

\begin{proof}
Note that for every such $j\in\mathbb{N}$ and $P'\in[2^{j-3},2^{j-1}]$ we may apply the estimate \eqref{dyadfinest} for $M=\lfloor 2^{\frac{2-c}{3}j}\rfloor$ to obtain
\begin{multline}
\Big|\sum_{2^{j-3}\le n\le P'} e(\xi_2 \mathsf{sign}(n)^{\mathsf{i}}  \lfloor |n|^c \rfloor+\xi_1 n )\Big|\lesssim(c-1)^{-\frac{1}{2}} (1+(2-c)j)\Big( \lfloor 2^{\frac{2-c}{3}j}\rfloor^{-1}2^j+2^{\frac{c}{2}j}2^{\frac{2-c}{6}j}+2^{(1-\frac{c}{2})j}|\xi_2|^{-\frac{1}{2}}\Big)
\\
\lesssim (c-1)^{-\frac{1}{2}}(2-c)j\Big(2^{(\frac{c+1}{3})j}+2^{(1-\frac{c}{2})j}|\xi_2|^{-\frac{1}{2}}\Big)\text{,}
\end{multline}
where we used that $\lfloor 2^{\frac{2-c}{3}j}\rfloor\ge 2^{\frac{2-c}{3}j}-1\ge \Big(\frac{2^{\frac{1}{3}}-1}{2^{\frac{1}{3}}}\Big)2^{\frac{2-c}{3}j}$, where the last inequality is true since it is equivalent to
\[
\frac{1}{2^{\frac{1}{3}}}2^{\frac{2-c}{3}j}\ge 1\iff2^{\frac{(2-c)j}{3}}\ge2^{\frac{1}{3}}\text{,}\quad\text{which is true by our assumption.}
\]
Using triangle inequality, breaking the sum above as follows $\big|\sum_{2^{j-3}\le n\le P'}\big|\le\big|\sum_{1\le n\le P'}\big|\le+\big|\sum_{1\le n< 2^{j-3}}\big|$ and using the estimate \eqref{fortherest} yields
\[
\Big|\sum_{2^{j-3}\le n\le P'} e(\xi_2 \mathsf{sign}(n)^{\mathsf{i}}  \lfloor |n|^c \rfloor+\xi_1 n )\Big|\lesssim|\xi_1|^{-1}\big(2^{cj}|\xi_2|+1\big)\text{.}
\]
Combining the two estimates above with Lemma~5.5 from \cite{Demeter} yields the following
\begin{multline}
\Big|\sum_{n\ge 0} e(\xi_2 \mathsf{sign}(n)^{\mathsf{i}}  \lfloor |n|^c \rfloor+\xi_1 n ) \psi_j(n)\Big|
\\
\lesssim 2^{-j}\min\big\{ (c-1)^{-\frac{1}{2}}(2-c)j\big(2^{(\frac{c+1}{3})j}+2^{(1-\frac{c}{2})j}|\xi_2|^{-\frac{1}{2}}\big),|\xi_1|^{-1}(2^{cj}|\xi_2|+1)\big\}
\text{,}
\end{multline}
and a symmetric argument establishes the same estimates for $n<0$. The proof is complete.
\end{proof}
It is clear that in both propositions above we do not have to exclude the cases where only one frequency is $0$. More specifically, if $\xi_2=0$ and $\xi_1\neq 0$, then the second estimate remains true. Similarly, for $\xi_2\neq 0$ and $\xi_1=0$, the first estimate holds.

\section{Major Box and Multiplier Approximations}
The exponential sum estimates allow us to employ a simplified variant of the circle method. The arithmeticity present in the analysis required when dealing with polynomial phases does not enter the picture here. The only
frequencies that are expected to be significant for $m^{\mathsf{i}}_j$ are the ones close to $(0,0)$. More precisely, let us fix one parameter $\varepsilon\coloneqq \frac{2-c}{4}\in(0,\min\{1/4,2-c\})\footnote{Let us note that if one does not wish to keep track of the dependencies of the strength of the estimates with respect to $c\in(1,2)$, as we do for the majority of the estimates in this section, any choice of $\varepsilon\in(0,\min\{1/4,2-c\})$ will be admissible.}$ and define the major box at scale $2^j$ as
\[
\mathfrak{M}_j \coloneqq \big\{ (\xi,\lambda)\in[-1/2,1/2)^2:\,|\xi| \leq 2^{(2\eps-1)j},\,| \lambda| \leq 2^{(\eps-c)j}\big\}\text{.}
\]
We collect some useful lemmas to appropriately approximate our multiplier, see Proposition~$\ref{approxcom}$.
\begin{lemma}\label{L1}
For every $j\in\mathbb{N}$, $\mathsf{i}\in\{0,1\}$, and $(\xi,\lambda)\in\mathfrak{M}_j$ we have that
	\begin{equation}\label{reduction1}
	m_j^{\mathsf{i}}(\xi,\lambda)=\int_{\mathbb{R}}e(\lambda[t]_{\mathsf{i}}^c-\xi t)\psi_j(t)\d t+O\big(2^{(2\varepsilon-1)j}\big)\text{.}
\end{equation}
where the implied constant is absolute.
\end{lemma}
\begin{proof}
Since we have the trivial estimates $\big|m_j^{\mathsf{i}}(\xi,\lambda)\big|\lesssim 1$ and $\Big|\int_{\mathbb{R}}e(\lambda[t]_{\mathsf{i}}^c-\xi t)\psi_j(t)\d t\Big|\lesssim 1$, it suffices to establish \eqref{reduction1} for $j\ge 3$, and we have
\[
m_j^{\mathsf{i}}(\xi,\lambda)-\int_{\mathbb{R}}e(\lambda[t]_{\mathsf{i}}^c-\xi t)\psi_j(t)\d t=\sum_{n\neq0}\int_n^{n+1}\big(e(\lambda \mathsf{sign}^{\mathsf{i}}(n) \lfloor |n|^c\rfloor-\xi n)\psi_j(n)-e(\lambda[t]_{\mathsf{i}}^c-\xi t)\psi_j(t)\big)\d t\text{.}
\]
Note that for $n\ge 1$ and $t\in[n,n+1]$ we have
\[
|e(\lambda |t|^c-\xi t)-e(\lambda \lfloor n^c \rfloor-\xi n)| \lesssim  |\lambda| \big|t^c- \lfloor n^c \rfloor\big|+|\xi| \big|t-n\big| \lesssim 2^{(\varepsilon-c)j}n^{c-1}+2^{(2\varepsilon-1)j}\text{.}
\]
Thus
\[
\begin{split}
\left|e(\lambda \lfloor |n|^{c} \rfloor-\xi n ) \psi_j(n)-e(\lambda |t|^c-\xi t)\psi_j(t)\right| & \leq \left| \left(e(\lambda |t|^c-\xi t)-e(\lambda \lfloor n^c \rfloor-\xi n)\right) \psi_j(n) \right|  \\ & + \left| e(\lambda |t|^c-\xi t)\psi_j(n)-e(\lambda |t|^c-\xi t) \psi_j(t) \right| \\ & \lesssim \big(2^{(\varepsilon-c)j}n^{c-1}+2^{(2\varepsilon-1)j}\big)|\psi_j(n)|+|\psi_j(n)-\psi_j(t)|\text{.}\end{split}
\]
	For the first summand we get
	\[
	|\psi_j(n)|\big(2^{(\varepsilon-c)j}n^{c-1}+2^{(2\varepsilon-1)j}\big)\lesssim 2^{-j} \big(2^{(\varepsilon-c)j}2^{(c-1)j}+2^{(2\varepsilon-1)j}\big)=2^{-j} \big(2^{(\varepsilon-1)j}+2^{(2\varepsilon-1)j}\big)\lesssim 2^{-j}2^{(2\varepsilon-1)j}\text{,}
	\]
	and for the second one, by the Mean Value Theorem we get
	\[
	|\psi_j(n)-\psi_j(t)|\le\|\psi_j'\|_{L^{\infty}}\lesssim 2^{-2j}\lesssim 2^{-j}2^{(2\varepsilon-1)j}\text{,}
	\]
	where the second to last estimate can be derived straightforwardly, using for example the definition of $\psi_j$.
	
	An identical treatment can be used for $n\le -1$, and putting everything together yields
	\[
	\Big|m_j^{ \mathsf{i}}(\xi,\lambda)-\int_{\mathbb{R}}e(\lambda[t]_{\mathsf{i}}^c-\xi t)\psi_j(t)dt\Big|\lesssim \sum_{2^{j-4}\le|n|\le 2^j}2^{-j}2^{(2\varepsilon-1)j}\lesssim 2^{(2\varepsilon-1)j}\text{,}\quad\text{as desired.}
	\]
\end{proof}
\begin{lemma}\label{minorarcestimate}For every $j\in\mathbb{N}$, $\mathsf{i}\in\{0,1\}$, and $(\xi,\lambda)\in[-1/2,1/2)^2\setminus\mathfrak{M}_j$ we have that
\[
|m_{j}^{\mathsf{i}}(\xi,\lambda)| \lesssim (c-1)^{-\frac{1}{2}}  2^{-\frac{\varepsilon}{4} j}\text{.}
\]
\end{lemma}
\begin{proof}We can assume that $j \varepsilon\ge 1$ because for $j$ such that $j \varepsilon<1$ the estimate above is immediate from the trivial estimate $|m_{j}^{\mathsf{i}}(\xi,\lambda)| \lesssim 1$. Thus Proposition~$\ref{exponentialweighted}$ is applicable and for $(\xi,\lambda)\notin \mathfrak{M}_j$ we get 
\[
|m_j^{\mathsf{i}}(\xi,\lambda)|\lesssim 2^{-j}\min\big\{ (c-1)^{-\frac{1}{2}}(2-c)j\big(2^{(\frac{c+1}{3})j}+2^{(1-\frac{c}{2})j}|\lambda|^{-\frac{1}{2}}\big),|\xi|^{-1}(2^{cj}|\lambda|+1)\big\}\text{.}
	\]
If $|\lambda|>2^{(\varepsilon-c)j}$, then
\[
\frac{|m_j^{\mathsf{i}}(\xi,\lambda)|}{(c-1)^{-\frac{1}{2}}}\lesssim (2-c)j2^{(\frac{c-2}{3})j}+(2-c)j2^{-\frac{c}{2}j}2^{(-\frac{\varepsilon}{2}+\frac{c}{2})j}\lesssim \varepsilon j2^{-\frac{\varepsilon}{2}j}\lesssim2^{-\frac{\varepsilon}{4}j}\text{,}
\]
where we used that $\varepsilon=\frac{2-c}{4}$, and the fact that
\begin{equation}\label{cleantrick}
 x 2^{- \frac{x}{2}}\lesssim 2^{-\frac{x}{4}}\text{.}
\end{equation}
On the other hand, if $|\lambda|\le 2^{(\varepsilon-c)j}$, since $(\xi,\lambda)\notin \mathfrak{M}_j$,  we get that $|\xi|>2^{(2\varepsilon-1)j}$, and thus
\[
|m_j^{\mathsf{i}}(\xi,\lambda)|\lesssim 2^{-j}2^{(1-2\varepsilon)j}(2^{cj}2^{(\varepsilon-c)j}+1)\lesssim 2^{-\varepsilon j}+2^{-2\varepsilon j}\lesssim (c-1)^{-\frac{1}{2}}2^{-\frac{\varepsilon}{4}j}\text{.}
\]
The proof is complete.
\end{proof}
\begin{lemma}\label{continuousvdc} Let $\mathsf{i}\in\{0,1\}$. If $\lambda,\xi\in\mathbb{R}$ are such that $
	|\lambda|>2^{(\varepsilon-c)j}$ or $|\xi|>2^{(2\varepsilon-1)j}$,
	then
	\begin{equation}\label{goalhere}
		\Big| \int_{\mathbb{R}}e(\lambda [t]_{\mathsf{i}}^c-\xi t) \psi_j(t) \d t  \Big|\lesssim (c-1)^{-\frac{1}{2}}2^{-\frac{\varepsilon }{2}j}\text{.}
	\end{equation}
\end{lemma}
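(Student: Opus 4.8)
The plan is a routine application of van der Corput's lemma, preceded by a symmetry reduction that exploits $\psi_j$ being odd. Splitting $\int_{\mathbb{R}}=\int_0^\infty+\int_{-\infty}^0$: on $(0,\infty)$ we have $[t]_{\mathsf{i}}^c=t^c$, so the phase is $\lambda t^c-\xi t$; on $(-\infty,0)$ the substitution $t\mapsto -t$ together with $\psi_j(-t)=-\psi_j(t)$ rewrites the contribution as $-\int_0^\infty e\big((-1)^{\mathsf{i}}\lambda t^c+\xi t\big)\psi_j(t)\,dt$, which has the same shape with $(\lambda,\xi)$ replaced by $\big((-1)^{\mathsf{i}}\lambda,-\xi\big)$ — quantities of the same absolute value, so the hypothesis $|\lambda|>2^{(\varepsilon-c)j}$ or $|\xi|>2^{(2\varepsilon-1)j}$ is preserved. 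Hence it suffices to show
\[
\Big|\int_0^\infty e(\mu t^c+\nu t)\,\psi_j(t)\,dt\Big|\lesssim 2^{-\varepsilon j/2}
\]
whenever $|\mu|>2^{(\varepsilon-c)j}$ or $|\nu|>2^{(2\varepsilon-1)j}$; here $\psi_j$ is supported where $|t|\simeq 2^j$ and satisfies $\|\psi_j\|_{L^\infty}+\|\psi_j'\|_{L^1}\lesssim 2^{-j}$.

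Write $\Psi(t)=\mu t^c+\nu t$, so $\Psi'(t)=c\mu t^{c-1}+\nu$ and $\Psi''(t)=c(c-1)\mu t^{c-2}$, and note $\Psi''$ has constant sign on $\supp\psi_j$, so $\Psi'$ is monotone there. If $|\mu|>2^{(\varepsilon-c)j}$, then on the support $|\Psi''(t)|\simeq|\mu|2^{(c-2)j}\gtrsim 2^{(\varepsilon-2)j}$, and the second-derivative van der Corput estimate gives
\[
\Big|\int_0^\infty e(\Psi(t))\psi_j(t)\,dt\Big|\lesssim\big(2^{(\varepsilon-2)j}\big)^{-1/2}\big(\|\psi_j\|_{L^\infty}+\|\psi_j'\|_{L^1}\big)\lesssim 2^{(2-\varepsilon)j/2}\cdot 2^{-j}=2^{-\varepsilon j/2}\text{,}
\]
which is exactly the claimed bound; this is the tight case. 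If instead $|\mu|\le 2^{(\varepsilon-c)j}$ and $|\nu|>2^{(2\varepsilon-1)j}$, we may assume $j$ exceeds a threshold depending only on $c,\varepsilon$, since for bounded $j$ the integral is $\lesssim\|\psi_j\|_{L^1}\lesssim 1$. On the support $|c\mu t^{c-1}|\lesssim|\mu|2^{(c-1)j}\le 2^{(\varepsilon-1)j}$, and since $2\varepsilon-1>\varepsilon-1$ this is $<\tfrac12 2^{(2\varepsilon-1)j}<\tfrac12|\nu|$ once $j$ is large, so $|\Psi'(t)|\ge\tfrac12|\nu|>\tfrac12 2^{(2\varepsilon-1)j}$. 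The first-derivative van der Corput estimate (valid because $\Psi'$ is monotone) then yields
\[
\Big|\int_0^\infty e(\Psi(t))\psi_j(t)\,dt\Big|\lesssim\frac{\|\psi_j\|_{L^\infty}+\|\psi_j'\|_{L^1}}{2^{(2\varepsilon-1)j}}\lesssim\frac{2^{-j}}{2^{(2\varepsilon-1)j}}=2^{-2\varepsilon j}\le 2^{-\varepsilon j/2}\text{,}
\]
and the two cases exhaust the hypothesis.

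There is no serious obstacle; the only points demanding a little care are carrying out the symmetry reduction so that $\mathsf{i}\in\{0,1\}$ is handled uniformly, checking that $\Psi''$ (hence $\Psi'$) does not change sign on $\supp\psi_j$ so that van der Corput applies on a single interval, and verifying that the exponents in the oscillatory case combine to give precisely $2^{-\varepsilon j/2}$ rather than something weaker.
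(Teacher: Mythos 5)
Your proof is correct and follows essentially the same route as the paper: reduce to large $j$ via the trivial bound, restrict to a half-line by the (odd) symmetry of $\psi_j$, then apply the second-derivative van der Corput test when $|\lambda|>2^{(\varepsilon-c)j}$ and the first-derivative test (after checking the linear term dominates and the derivative is monotone) when $|\xi|>2^{(2\varepsilon-1)j}$. The only cosmetic difference is that the paper treats $\lambda=0$ separately via Fourier decay of $\psi_j$, whereas you correctly absorb it into the first-derivative case since a constant derivative is still monotone.
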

\begin{proof}
Using the trivial estimate $\Big| \int_{\mathbb{R}}e(\lambda [t]_{\mathsf{i}}^c-\xi t) \psi_j(t) dt  \Big|\lesssim 1$, we see that it suffices to establish \eqref{goalhere} for $\varepsilon j\gtrsim1$, in particular we prove it below for $j\ge 2\varepsilon^{-1}$. Taking into account the support of $\psi_j$ we may split the integral into the intervals $(-2^{j-1},-2^{j-3})$ and $(2^{j-3},2^{j-1})$ and apply van der Corput's lemma. Without loss of generality, we may assume that $\lambda\ge 0$. In particular, if $|\lambda|>  2^{(\varepsilon-c)j}$ by the second derivative test, see for example Corollary~2.6.8. in \cite{Grafakos}, one has
\[
\Big|\int_{\mathbb{R}}e(\lambda [t]_{\mathsf{i}}^c-\xi t) \psi_j(t) \d t\Big|\lesssim (c(c-1))^{-\frac{1}{2}}  |\lambda|^{-\frac{1}{2}}2^{\frac{2-c}{2}j}2^{-j}\lesssim(c-1)^{-\frac{1}{2}} 2^{(-\frac{\varepsilon}{2}+\frac{c}{2})j}2^{-\frac{c}{2}j}\lesssim(c-1)^{-\frac{1}{2}} 2^{-\frac{\varepsilon }{2}j}\text{.}
	\]
On the other hand, if $|\lambda| \le 2^{(\varepsilon-c)j}$, then $|\xi| > 2^{(2\varepsilon-1)j}$, and we apply the first derivative test. We do this for the interval of positive numbers but the argument is identical for the symmetric one. We start with $\lambda\neq0$, for all $t\in[2^{j-3},2^{j-1}]$ we have that
	\[
	|(\lambda t^c-\xi t)'|=|\lambda c t^{c-1}-\xi|\ge |\xi|-|\lambda c t^{c-1}|\ge |\xi|-2^{(\varepsilon-c)j}c 2^{(c-1)j}>2^{(2\varepsilon-1)j}-c2^{(\varepsilon-1)j}
	\]
	\[
	=2^{(2\varepsilon-1)j}(1-c2^{-\varepsilon j})\ge \frac{1}{2}2^{(2\varepsilon-1)j}\text{,}
	\]
	since $1-c2^{-\varepsilon j}\ge 1/2\iff j\ge \varepsilon^{-1}\log_2(2c)$, which is true because $j\ge 2\varepsilon^{-1}\ge \varepsilon^{-1}\log_2(2c)$.  We have that $(\lambda t^c-\xi t)''=\lambda c (c-1)t^{c-2}>0$ and thus the first derivative is monotone, and van der Corput applicable. We get that
	\[
	\Big|\int_{\mathbb{R}}e(\lambda [t]_{\mathsf{i}}^c-\xi t) \psi_j(t) \d t\Big|\lesssim 2^{(1-2\varepsilon)j}2^{-j}\lesssim 2^{-\frac{\varepsilon}{2} j}\text{.}
	\]
Finally, for $\lambda=0$, using the uniform Schwartz decay of $\psi_j$'s we obtain
\[
\bigg|	\int_{\mathbb{R}}e(-\xi t) \psi_j(t) \d t\bigg| \lesssim |\xi 2^{j}|^{-1} \lesssim 2^{-\frac{\varepsilon}{2}j}\text{,}\quad\text{and the proof is complete.}
\] 
\end{proof}
\begin{lemma}\label{derivtrivial}
For every $\mathsf{i}\in\{0,1\}$ and $j\in\mathbb{N}$ we have that
	\[
	|\partial_{\lambda}m_j^{\mathsf{i}}(\xi,\lambda)|\lesssim 2^{cj}\quad\text{and}\quad\Big|\partial_{\lambda}\Big(\int_{\mathbb{R}}e(\lambda[t]_{\mathsf{i}}^c-\xi t)\psi_j(t)\d t\Big)\Big|\lesssim 2^{cj}\text{.}
	\]
\end{lemma}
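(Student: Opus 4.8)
The plan is to differentiate under the (finite) sum, respectively under the integral, and then estimate crudely term by term, using only the compact support and the pointwise size of $\psi_j$; this is the reason for the name of the lemma. Since $\psi_j$ is supported in $[-2^{j-1},-2^{j-3}]\cup[2^{j-3},2^{j-1}]$, the series defining $m_j^{\mathsf{i}}$ is in fact a finite sum, so we may freely interchange $\partial_\lambda$ with it to obtain
\[
\partial_\lambda m_j^{\mathsf{i}}(\xi,\lambda)=2\pi i\sum_{n\neq 0}\mathsf{sign}(n)^{\mathsf{i}}\lfloor|n|^c\rfloor\, e\big(\lambda\,\mathsf{sign}(n)^{\mathsf{i}}\lfloor|n|^c\rfloor-\xi n\big)\psi_j(n).
\]
On $\supp\psi_j$ we have $|n|\le 2^{j-1}$, hence $\lfloor|n|^c\rfloor\le|n|^c\lesssim 2^{cj}$; moreover $\|\psi_j\|_{L^\infty}\lesssim 2^{-j}$, which is immediate from the formula $\psi_j(x)=\frac1x\big(\varphi(2^{-j}x)-\varphi(2^{-j+1}x)\big)$ recorded at the beginning of Section~2, and the number of integers in $\supp\psi_j$ is $\lesssim 2^j$. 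Multiplying these three bounds gives $|\partial_\lambda m_j^{\mathsf{i}}(\xi,\lambda)|\lesssim 2^j\cdot 2^{cj}\cdot 2^{-j}=2^{cj}$.

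For the second estimate the argument is identical with sums replaced by integrals: the integrand is smooth and compactly supported in $t$, so differentiation under the integral sign is legitimate and
\[
\partial_\lambda\Big(\int_{\mathbb{R}}e(\lambda[t]_{\mathsf{i}}^c-\xi t)\psi_j(t)\,\d t\Big)=2\pi i\int_{\mathbb{R}}[t]_{\mathsf{i}}^c\, e(\lambda[t]_{\mathsf{i}}^c-\xi t)\psi_j(t)\,\d t.
\]
Since $|[t]_{\mathsf{i}}^c|=|t|^c\lesssim 2^{cj}$ on $\supp\psi_j$, $\|\psi_j\|_{L^\infty}\lesssim 2^{-j}$, and $|\supp\psi_j|=3\cdot 2^{j-2}\lesssim 2^j$, the right-hand side is bounded in absolute value by $2^j\cdot 2^{cj}\cdot 2^{-j}=2^{cj}$, as claimed.

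There is essentially no obstacle in this lemma; it is a purely bookkeeping estimate. The only points requiring (routine) care are the justification of interchanging $\partial_\lambda$ with the finite sum and with the compactly supported integral, and the two elementary facts $\|\psi_j\|_{L^\infty}\lesssim 2^{-j}$ and $|\supp\psi_j|\lesssim 2^j$, both of which follow at once from the explicit definition of $\psi_j$. Note that the crude triangle-inequality bound used here is wasteful — there is cancellation in both expressions — but since the lemma will only be invoked to control $\lambda$-derivatives of error terms whose smallness we do not wish to leverage, the trivial bound $2^{cj}$ is exactly what is needed.
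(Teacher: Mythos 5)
Your proof is correct and follows essentially the same route as the paper, which simply differentiates under the (finite) sum and under the compactly supported integral and invokes the trivial bounds $\lfloor|n|^c\rfloor\lesssim 2^{cj}$, $\|\psi_j\|_{L^\infty}\lesssim 2^{-j}$, and $|\supp\psi_j|\lesssim 2^j$. The only difference is that you spell out the bookkeeping that the paper dismisses as obvious.
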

\begin{proof}
	This is obvious since 
	\[
	|\partial_{\lambda}m_j(\xi,\lambda)|=\Big|\sum_{2^{j-3}\le |n|\le 2^{j-1}}2\pi i  \mathsf{sign}(n)^{\mathsf{i}} \lfloor |n|^c \rfloor e(\lambda\mathsf{sign}(n)^{\mathsf{i}}\lfloor |n|^c \rfloor-\xi n)\psi_j(n)\Big|\lesssim 2^{cj}\text{,}
	\]
	and by the dominated convergence theorem we get
	\[
	\Big|\partial_{\lambda}\Big(\int_{\mathbb{R}}e(\lambda[t]_{\mathsf{i}}^c-\xi t)\psi_j(t)\d t\Big)\Big|=\Big|\int_{2^{j-3}\le|t|\le 2^{j-1}}2\pi i[t]_{\mathsf{i}}^c e(\lambda[t]_{\mathsf{i}}^c-\xi t)\psi_j(t)\d t\Big|\lesssim 2^{cj}\text{.}
	\]
\end{proof}
We now combine the previous four lemmas to establish the following proposition.
\begin{proposition}\label{approxcom}Fix $\eta_1\in\mathcal{C}^{\infty}_c(\mathbb{R})$ such that $\cic{1}_{[-1/4,1/4]}\le\eta_1\le \cic{1}_{[-1/2,1/2]}$ and let $\eta(\xi,\lambda)=\eta_1(\xi)\eta_1(\lambda)$. For every $j\in\mathbb{N}$ and $\mathsf{i}\in\{0,1\}$, let
	\[
	H_j^{\mathsf{i}}(\xi,\lambda):=\eta(\xi,\lambda)\int_{\mathbb{R}}e(\lambda [t]_{\mathsf{i}}^c-\xi t)\psi_j(t)\d t\quad\text{and}\quad E_j^{\mathsf{i}}:=m^{\mathsf{i}}_j-H_j^{\mathsf{i}}\text{.}
	\]
	Then we have
	\[
	\sup_{\xi,\lambda\in[-1/2,1/2)}|E_j^{\mathsf{i}}(\xi,\lambda)|\lesssim(c-1)^{-\frac{1}{2}} 2^{-\frac{\varepsilon}{4} j}\quad\text{and}\quad\sup_{\xi,\lambda\in[-1/2,1/2)}|\partial_{\lambda} E_j^{\mathsf{i}}(\xi,\lambda)|\lesssim  2^{cj}\text{.}
	\]
\end{proposition}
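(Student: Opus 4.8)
The plan is to deduce the proposition by simply assembling Lemmas~\ref{L1}--\ref{derivtrivial}, splitting according to whether $(\xi,\lambda)$ lies in the major box $\mathfrak{M}_j$ and treating the finitely many small $j$ via the trivial bound. Throughout I will write $I_j(\xi,\lambda):=\int_{\mathbb{R}}e(\lambda[t]_{\mathsf{i}}^c-\xi t)\psi_j(t)\,\d t$, so that $H_j^{\mathsf{i}}=\eta\cdot I_j$ and $E_j^{\mathsf{i}}=m_j^{\mathsf{i}}-\eta\cdot I_j$. I will use freely the a priori bounds $|m_j^{\mathsf{i}}|\le\sum_{n}|\psi_j(n)|\lesssim 1$ and $|I_j|\le\int_{\mathbb{R}}|\psi_j|\lesssim 1$, which already give $|E_j^{\mathsf{i}}|\lesssim 1$ for every $j$.

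For the derivative estimate I would just differentiate the product: $\partial_\lambda E_j^{\mathsf{i}}=\partial_\lambda m_j^{\mathsf{i}}-(\partial_\lambda\eta)\,I_j-\eta\,\partial_\lambda I_j$. Since $\|\partial_\lambda\eta\|_{L^\infty}\lesssim 1$, $|I_j|\lesssim 1$, $\|\eta\|_{L^\infty}\le 1$, and both $|\partial_\lambda m_j^{\mathsf{i}}|\lesssim 2^{cj}$ and $|\partial_\lambda I_j|\lesssim 2^{cj}$ by Lemma~\ref{derivtrivial}, this yields $|\partial_\lambda E_j^{\mathsf{i}}|\lesssim 2^{cj}$ immediately; no case analysis is needed.

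For the sup estimate, I would first fix a threshold $j_0=j_0(c,\varepsilon)$ large enough that $\mathfrak{M}_j\subseteq[-1/4,1/4]^2$ for all $j\ge j_0$ — this is possible because $2\varepsilon-1<0$ and $\varepsilon-c<0$ — and dispose of the range $j<j_0$ using $|E_j^{\mathsf{i}}|\lesssim 1\lesssim 2^{-\frac{\varepsilon}{4}j}$ with implied constant $2^{\varepsilon j_0/4}$. For $j\ge j_0$ and $(\xi,\lambda)\in[-1/2,1/2)^2$ I would split into two cases. If $(\xi,\lambda)\in\mathfrak{M}_j$ then $\eta(\xi,\lambda)=1$ since $\mathfrak{M}_j\subseteq[-1/4,1/4]^2$, so $E_j^{\mathsf{i}}(\xi,\lambda)=m_j^{\mathsf{i}}(\xi,\lambda)-I_j(\xi,\lambda)=O(2^{(2\varepsilon-1)j})$ by Lemma~\ref{L1}, and since $\varepsilon<1/4<4/9$ one has $\tfrac{9}{4}\varepsilon-1<0$, hence $2^{(2\varepsilon-1)j}\le 2^{-\frac{\varepsilon}{4}j}$. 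If $(\xi,\lambda)\notin\mathfrak{M}_j$ then Lemma~\ref{minorarcestimate} gives $|m_j^{\mathsf{i}}(\xi,\lambda)|\lesssim 2^{-\frac{\varepsilon}{4}j}$, while for $H_j^{\mathsf{i}}$ I may assume $\eta(\xi,\lambda)\neq 0$ (otherwise $H_j^{\mathsf{i}}=0$ and there is nothing to do), in which case $|\xi|,|\lambda|\le 1/2$ and the failure of $(\xi,\lambda)\in\mathfrak{M}_j$ forces $|\xi|>2^{(2\varepsilon-1)j}$ or $|\lambda|>2^{(\varepsilon-c)j}$, so Lemma~\ref{continuousvdc} applies and gives $|H_j^{\mathsf{i}}(\xi,\lambda)|\le|I_j(\xi,\lambda)|\lesssim 2^{-\frac{\varepsilon}{2}j}\lesssim 2^{-\frac{\varepsilon}{4}j}$; combining the two contributions yields $|E_j^{\mathsf{i}}(\xi,\lambda)|\lesssim 2^{-\frac{\varepsilon}{4}j}$, as claimed.

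I do not expect a genuine obstacle: the proposition is a bookkeeping exercise combining the four preceding lemmas. The only points that need a little attention are (i) that one never has to control the transition region where $\eta$ drops from $1$ to $0$, precisely because everywhere outside $\mathfrak{M}_j$ both $m_j^{\mathsf{i}}$ (via Lemma~\ref{minorarcestimate}) and $I_j$ (via Lemma~\ref{continuousvdc}) already decay by a fixed negative power of $2^j$, uniformly in $(\xi,\lambda)$; and (ii) the elementary exponent inequality $2\varepsilon-1\le-\tfrac{\varepsilon}{4}$, valid since $\varepsilon<\tfrac49$, which is exactly what lets the $O(2^{(2\varepsilon-1)j})$ error produced by Lemma~\ref{L1} be absorbed into the target bound $2^{-\varepsilon j/4}$.
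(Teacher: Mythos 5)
Your proposal is correct and follows essentially the same route as the paper: reduce to large $j$ via the trivial bound, use Lemma~\ref{L1} on $\mathfrak{M}_j$ (where $\eta\equiv 1$) together with the exponent inequality $2\varepsilon-1<-\varepsilon/4$, use Lemmas~\ref{minorarcestimate} and~\ref{continuousvdc} separately off $\mathfrak{M}_j$, and get the derivative bound from Lemma~\ref{derivtrivial}. Your explicit product-rule treatment of the $(\partial_\lambda\eta)I_j$ term is a minor point the paper leaves implicit, but it changes nothing of substance.
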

\begin{proof}
	A similar reasoning with the one in the beginning of the proof of Lemma~$\ref{continuousvdc}$ shows that it suffices to establish the first estimate for $j\ge3$ and for all such $j$ we get that $\mathfrak{M}_j\subseteq [-1/4,1/4]^2$ and thus if $(\xi,\lambda)\in\mathfrak{M}_j$, then $\eta(\xi,\lambda)=1$. 
	The first estimate for $(\xi,\lambda)\in\mathfrak{M}_j$ is therefore immediate from Lemma~$\ref{L1}$, since for all such $(\xi,\lambda)$ we have $|m^{\mathsf{i}}_j(\xi,\lambda)-H^{\mathsf{i}}_j(\xi,\lambda)|\lesssim 2^{(2\varepsilon-1)j}\le 2^{-\frac{\varepsilon}{4} j}$, and to see why the last estimate holds simply note
	\begin{equation}\label{epsilons}
		2\varepsilon-1<-\frac{\varepsilon}{4}\iff\varepsilon<\frac{4}{9}\text{,}\quad\text{which is true since $\varepsilon<1/4$.}
	\end{equation}
	If $(\xi,\lambda)\notin\mathfrak{M}_j$, then Lemmas~$\ref{minorarcestimate}$,$\ref{continuousvdc}$ give that 
\[
|m^{\mathsf{i}}_j(\xi,\lambda)-H^{\mathsf{i}}_j(\xi,\lambda)|\le |m^{\mathsf{i}}_j(\xi,\lambda)|+|H^{\mathsf{i}}_j(\xi,\lambda)|\lesssim(c-1)^{-\frac{1}{2}}2^{-\frac{\varepsilon}{4} j}\text{.}
\]
Therefore we have shown that
	\[
	\sup_{\xi,\lambda\in[-1/2,1/2)}|E_j^{\mathsf{i}}(\xi,\lambda)|\lesssim (c-1)^{-\frac{1}{2}}2^{-\frac{\varepsilon}{4} j}\text{,}
	\]
and Lemma~$\ref{derivtrivial}$ immediately yields the latter assertion of the proposition concluding the proof.
\end{proof}
The afforementioned approximation will play a crucial role in establishing Theorem~$\ref{maintheorem}$. We now proceed in stating and proving the corresponding necessary approximation for the proof of Theorem~$\ref{PET}$. The key ingredient of our argument for establishing the pointwise ergodic theorem is to perform a similar analysis to the one of Proposition~$\ref{approxcom}$ in order to approximate the following multiplier 
\[
k_t(\xi_1,\xi_2)\coloneqq\frac{1}{t}\sum_{1\le n\le t}e(-\xi_2\lfloor n^c\rfloor-\xi_1 n)
\]
by its continuous counterpart. More precisely, fix $c\in(1,2)$, $\tau\in(0,\min\{1/4,2-c\})$ and for every $t> 1$ let 
\[
\widetilde{\mathfrak{M}}_{t}\coloneqq \big\{ (\xi_1,\xi_2)\in[-1/2,1/2)^2:\,|\xi_1| \leq t^{2\tau-1},\,| \xi_2| \leq t^{\tau-c}\big\}\text{.}
\]
We have the following proposition.
\begin{proposition}\label{multapproxforpetclean}
	For every $t\in\mathbb{N}$, let $\mathcal{L}_t(\xi_1,\xi_2):=\frac{1}{t}\int_{0}^{t}e(-\xi_2s^c-\xi_1 s)\d s$.
	There exists a positive constant $C=C(c,\tau)$ such that
	\begin{equation}\label{goalPet}
		\sup_{\xi_1,\xi_2\in[-1/2,1/2)}|k_t(\xi_1,\xi_2)-\mathcal{L}_t(\xi_1,\xi_2)| \le C t^{-\frac{\tau}{4}}\text{,} 
	\end{equation}
	and more precisely,
	\begin{equation}\label{goal1Pet}
		\sup_{(\xi_1,\xi_2)\in\widetilde{\mathfrak{M}}_t}|k_t(\xi_1,\xi_2)-\mathcal{L}_t(\xi_1,\xi_2)| \le C t^{-\frac{\tau}{4}}\text{,}
	\end{equation}
	\begin{equation}
		\sup_{(\xi_1,\xi_2)\in[-1/2,1/2)^2\setminus\widetilde{\mathfrak{M}}_t}|k_t(\xi_1,\xi_2)|\le C t^{-\frac{\tau}{4}}
		\text{,}\quad\text{and}\quad
		\sup_{(\xi_1,\xi_2)\in\mathbb{R}^2\setminus\widetilde{\mathfrak{M}}_t}|\mathcal{L}_t(\xi_1,\xi_2)|\le C t^{-\frac{\tau}{4}}\text{.}
	\end{equation}
\end{proposition}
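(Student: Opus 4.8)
The plan is to prove Proposition~$\ref{multapproxforpetclean}$ by running, essentially verbatim, the same circle-method scheme used for Proposition~$\ref{approxcom}$, but now in the simpler setting of averaging multipliers rather than dyadically localized Carleson pieces. The three displayed estimates split the argument into a major-box comparison, a minor-arc decay bound for $k_t$, and a minor-arc decay bound for $\mathcal{L}_t$; together with the trivial bounds $|k_t|\le 1$ and $|\mathcal{L}_t|\le 1$ these give \eqref{goalPet}. As in the proof of Lemma~$\ref{continuousvdc}$, the trivial bounds reduce everything to $t$ larger than an absolute constant depending on $c,\varepsilon$, and I will assume this throughout without further comment.

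First I would prove \eqref{goal1Pet}. Fix $(\xi_1,\xi_2)\in\widetilde{\mathfrak{M}}_t$. Writing $\lfloor n^c\rfloor=n^c-\{n^c\}$ and then comparing the sum $\frac1t\sum_{1\le n\le t}e(-\xi_2 n^c-\xi_1 n)$ with the integral $\frac1t\int_0^t e(-\xi_2 s^c-\xi_1 s)\,\d s$ term by term on each interval $[n,n+1]$, one bounds the difference exactly as in Lemma~$\ref{L1}$: for $s\in[n,n+1]$ with $1\le n\le t$ one has $|\xi_2|\,|s^c-n^c|\lesssim t^{\varepsilon-c}t^{c-1}=t^{\varepsilon-1}$ and $|\xi_1|\,|s-n|\le t^{2\varepsilon-1}$, so each of the $O(t)$ terms contributes $O(t^{2\varepsilon-1})$ after division by $t$, i.e.\ a total of $O(t^{2\varepsilon-1})$; since $2\varepsilon-1<-\varepsilon/4$ for $\varepsilon<4/9$ (cf.\ \eqref{epsilons}) this is $O(t^{-\varepsilon/4})$. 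The floor-removal error is the only genuinely new bookkeeping: one must control $\frac1t\sum_{1\le n\le t}\big(e(-\xi_2\{n^c\})-1\big)$, and here I would invoke the Fourier expansion \eqref{floorapprox} with a parameter $M$ together with the van der Corput estimate $U_{P,P'}(t',\xi)\lesssim P^{c/2}|t'|^{1/2}+P^{1-c/2}|t'|^{-1/2}$ from Lemma~4.3 of \cite{LPE} and the bound on $V_{P,P'}(M)$ from Lemma~4.4 — in fact this is precisely the content of the first bound in Proposition~$\ref{expsummin}$, applied with the same choice of $M$ that optimizes $NM^{-1}+N^{c/2}M^{1/2}$ (taking $M\simeq N^{(2-c)/3}$ gives a gain of $N^{-(2-c)/3+o(1)}$), so that on $\widetilde{\mathfrak M}_t$ the summed floor-error is $O(t^{1-\varepsilon/2})$-type after accounting for the trivial term and is dominated by $t^{1-\varepsilon/4}$; dividing by $t$ closes \eqref{goal1Pet}. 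Alternatively, and perhaps more cleanly, summation by parts reduces $\sum_{n\le t}e(-\xi_1 n)\big(e(-\xi_2\lfloor n^c\rfloor)-1\big)$ to the second bound of Proposition~$\ref{expsummin}$, but since $\xi_1$ may be tiny on $\widetilde{\mathfrak M}_t$ the van der Corput route via the first bound is the safe one.

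Next, the decay estimate $\sup_{(\xi_1,\xi_2)\notin\widetilde{\mathfrak M}_t}|k_t(\xi_1,\xi_2)|\lesssim t^{-\varepsilon/4}$ follows directly from Proposition~$\ref{expsummin}$ applied with $N=\lfloor t\rfloor$, exactly mirroring the proof of Lemma~$\ref{minorarcestimate}$: if $|\xi_2|>t^{\varepsilon-c}$ use the first bound (with the optimal $M$) to get $j\cdot t^{(c-2)/3+o(1)}+j\cdot t^{-\varepsilon/2+o(1)}\lesssim t^{-\varepsilon/4}$ since $\varepsilon<2-c$; if $|\xi_2|\le t^{\varepsilon-c}$ then necessarily $|\xi_1|>t^{2\varepsilon-1}$, and the second bound $|\xi_1|^{-1}(t^c|\xi_2|+1)\lesssim t^{1-2\varepsilon}(t^{\varepsilon}+1)$, after division by $t$, gives $t^{-\varepsilon}+t^{-2\varepsilon}\lesssim t^{-\varepsilon/4}$. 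The continuous minor-arc bound $\sup_{(\xi_1,\xi_2)\notin\widetilde{\mathfrak M}_t}|\mathcal L_t(\xi_1,\xi_2)|\lesssim t^{-\varepsilon/4}$ is the analogue of Lemma~$\ref{continuousvdc}$: after the change of variables $s=tu$ one has $\mathcal L_t(\xi_1,\xi_2)=\int_0^1 e(-\xi_2 t^c u^c-\xi_1 t u)\,\d u$, and one applies the second-derivative (van der Corput) test when $|\xi_2 t^c|\gtrsim t^{\varepsilon}$ is large, gaining $(|\xi_2| t^c)^{-1/2}\lesssim t^{-\varepsilon/2}$, and the first-derivative test when $|\xi_2|\le t^{\varepsilon-c}$ (so $|\xi_1|>t^{2\varepsilon-1}$), where on $[0,1]$ the phase derivative $|{-}\xi_2 c t^c u^{c-1}-\xi_1 t|\gtrsim |\xi_1| t\gtrsim t^{2\varepsilon}$ is monotone (its derivative has a fixed sign), yielding a bound $\lesssim t^{-2\varepsilon}$; the degenerate case $\xi_2=0$ is the elementary $|\int_0^1 e(-\xi_1 t u)\,\d u|\lesssim (|\xi_1| t)^{-1}\lesssim t^{-2\varepsilon}$. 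Combining the three displays with the trivial bounds on the complementary regions yields \eqref{goalPet}. The main obstacle is purely technical — keeping the logarithmic losses $(1+\log M)$ and the optimization of $M$ consistent so that the exponents genuinely beat $-\varepsilon/4$ on $\widetilde{\mathfrak M}_t$ — and this is already packaged for us in Proposition~$\ref{expsummin}$, so no new idea beyond careful exponent arithmetic is required.
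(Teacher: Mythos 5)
Your overall architecture matches the paper's: a term-by-term comparison on the major box $\widetilde{\mathfrak{M}}_t$, the two bounds of Proposition~\ref{expsummin} (with $M\simeq t^{(2-c)/3}$) for $k_t$ off the major box, and first/second derivative tests for $\mathcal{L}_t$ off the major box, assembled by the triangle inequality. The minor-arc parts of your argument are correct; your treatment of $\mathcal{L}_t$ by rescaling $s=tu$ and applying van der Corput on $[0,1]$ directly (using $|\phi''(u)|\gtrsim|\xi_2|t^c$ there) is in fact a small simplification of the paper's argument, which instead decomposes $[0,t]$ dyadically before applying the derivative tests.

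The one genuine misstep is your handling of the floor-removal error on the major box. You propose to control $\frac1t\sum_{n\le t}\bigl(e(-\xi_2\{n^c\})-1\bigr)$ by the Fourier expansion \eqref{floorapprox} combined with Lemmas~4.3--4.4 of \cite{LPE}, asserting that ``this is precisely the content of the first bound in Proposition~\ref{expsummin}.'' That bound contains the term $N^{1-\frac{c}{2}}|\xi_2|^{-\frac12}$, which comes from the $m=0$ term $U_{P,P'}(\xi_2,\xi_1)$ and blows up as $\xi_2\to0$; on $\widetilde{\mathfrak{M}}_t$ one has $|\xi_2|\le t^{\varepsilon-c}$ (and $\xi_2=0$ is allowed, where the first bound of Proposition~\ref{expsummin} does not even apply), so the quoted estimate is vacuous exactly where you need it, and your claimed ``$O(t^{1-\varepsilon/2})$-type'' bound does not follow. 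The point is that on the major box no oscillation is needed at all: $|e(-\xi_2\{n^c\})-1|\lesssim|\xi_2|\le t^{\varepsilon-c}\ll t^{-\varepsilon/4}$ termwise. This is what the paper does, in the slightly repackaged form $|\xi_2|\,\bigl|\lfloor m^c\rfloor-s^c\bigr|\lesssim|\xi_2|\,m^{c-1}\le t^{\varepsilon-c}m^{c-1}$ for $s\in[m-1,m]$, which absorbs both the floor defect $\{m^c\}\le1$ and the drift $m^c-s^c$ in one stroke and sums to $O(t^{\varepsilon})$ before division by $t$. (Your route could be salvaged by noting that the $m=0$ coefficient of $e(-\xi_2\{y\})-1$ is $c_0(\xi_2)-1=O(|\xi_2|)$ rather than $O(1)$, so no $|\xi_2|^{-1/2}$ ever appears, but this is far more work than the trivial bound.) With that step replaced, the proof closes as you describe.
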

\begin{proof}
	We fix $c$ and $\tau$ as before and all implied constants here may depend on them. It suffices to establish the estimates for $t\gtrsim_{c,\tau} 1$, and in particular, we will prove the result for $t\ge(c2^c)^{1/\tau}$. For the first assertion of \eqref{goal1Pet} note that for all $(\xi_1,\xi_2)\in\widetilde{\mathfrak{M}}_t$ we have that
	\begin{multline}
	t|k_t(\xi_1,\xi_2)-\mathcal{L}_t(\xi_1,\xi_2)|=\bigg|\sum_{m=1}^{t}\int_{m-1}^{m}\big(e(-\xi_1 m-\xi_2\lfloor m^c\rfloor)-e(-\xi_1 s-\xi_2s^c)\big)\d s\bigg|
	\\
	\le\sum_{m=1}^{t}\int_{m-1}^m\big|e(-\xi_1 m-\xi_2\lfloor m^c\rfloor)-e(-\xi_1 s-\xi_2s^c)\big|\d s
\\
	\lesssim\sum_{m=1}^{t}\int_{m-1}^m\Big(|\xi_1|(m-s)+|\xi_2|(\lfloor m^c\rfloor- s^c)\Big)ds\lesssim \sum_{m=1}^{t}\big(t^{2\tau-1}+t^{\tau-c}m^{c-1}\big)\lesssim
	t^{2\tau}+t^{\tau}\lesssim t^{2\tau}\text{.}
	\end{multline}
	We have used that for all $s\in[m-1,m]$, we have $\lfloor m^c\rfloor- s^c\le m^c-(m-1)^c\lesssim m^{c-1}$, and the last estimate can be justified by the Mean Value Theorem. This immediately implies that 
	\begin{equation}\label{goal1done}		\sup_{\xi\in\widetilde{\mathfrak{M}}_t}|k_t(\xi)-\mathcal{L}_t(\xi)|\lesssim t^{2\tau-1}\le t^{-\frac{\tau}{4}}\text{,}\quad\text{where the last inequality is justified as in \eqref{epsilons}.}
	\end{equation}
Now we establish the other two estimates in two steps. Let $\widetilde{\mathfrak{M}}_t^{\mathsf{c}}=[-1/2,1/2)\setminus\widetilde{\mathfrak{M}}_t$ and let $(\xi_1,\xi_2)\in \widetilde{\mathfrak{M}}_t^{\mathsf{c}}$. If $|\xi_2|>t^{\tau-c}$, then by Proposition~$\ref{expsummin}$ applied for $N=t$ and $M=\lfloor N^{\frac{2-c}{3}}\rfloor$ we get
	\[
	|k_t(\xi_1,\xi_2)|\lesssim (1+\log t)(t^{\frac{c-2}{3}}+t^{-\frac{c}{2}}t^{\frac{c-\tau}{2}})\lesssim t^{-\frac{\tau}{4}}\text{,}\quad \text{since $\tau<2-c$.}
	\]
	If $|\xi_2|\le t^{\tau-c}$, then since $(\xi_1,\xi_2)\in\widetilde{\mathfrak{M}}^{\mathsf{c}}_t$, we must have that $|\xi_1|>t^{2\tau-1}$. But then by the second estimate of Proposition~$\ref{expsummin}$ we obtain
	\begin{equation}
		\label{goal22half}
		|k_t(\xi_1,\xi_2)|\lesssim t^{-1}|\xi_1|^{-1}(1+t^c|\xi_2|)\lesssim t^{-1}t^{1-2\tau}(1+t^ct^{\tau-c})\lesssim t^{-2\tau }+t^{-\tau}\lesssim t^{-\frac{\tau}{4}}\text{.} 
	\end{equation}
	Thus we have shown that $\sup_{\xi\in\widetilde{\mathfrak{M}}^\mathsf{c}_t}|k_t(\xi)|\lesssim t^{-\frac{\tau}{4}}$. It remains to show that $\sup_{\xi\in\mathbb{R}^2\setminus\widetilde{\mathfrak{M}}_t}|\mathcal{L}_t(\xi)|\lesssim t^{-\frac{\tau}{4}}$ and we achieve this via a similar reasoning. Let $(\xi_1,\xi_2)\in \mathbb{R}^2\setminus\widetilde{\mathfrak{M}}_t$. If $|\xi_2|>t^{\tau-c}$, then note that
	\begin{equation}\label{dyadpartition}
		|\mathcal{L}_t(\xi_1,\xi_2)|\le t^{-1}\Big|\int_{0}^{t}e(-\xi_2s^c-\xi_1 s)\d s\Big|\le O(t^{-1})+t^{-1}\sum_{m=0}^{\lfloor\log_2 t\rfloor}\Big|\int_{2^m}^{\min\{2^{m+1},t\}}e(-\xi_2s^c-\xi_1 s)\d s\Big|\text{.}
	\end{equation}
	For any $m\in\{0,\dotsc,\lfloor \log_2 t\rfloor\}$ and $s\in[2^m,2^{m+1}]$ we have that the second derivative of the phase can be estimated as follows  
	\[
	|(-\xi_2s^c-\xi_1 s)''|\gtrsim|\xi_2|s^{c-2}\gtrsim |\xi_2|2^{m(c-2)}\text{,}
	\]
	and we may apply  van der Corput to obtain
	\[
	\Big|\int_{2^m}^{\min\{2^{m+1},t\}}e(-\xi_2s^c-\xi_1 s)\d s\Big|\lesssim |\xi_2|^{-1/2}2^{\frac{2-c}{2}m}\lesssim t^{\frac{c-\tau}{2}}2^{\frac{2-c}{2}m} \text{.}
	\]
	Applying this estimate to \eqref{dyadpartition} yields
	\[
	|\mathcal{L}_t(\xi_1,\xi_2)|\lesssim t^{-1}+t^{-1}\sum_{m=0}^{\lfloor \log_2t\rfloor}t^{\frac{c-\tau}{2}}2^{\frac{2-c}{2}m}=\lesssim t^{-1}+t^{\frac{c-\tau}{2}-1}\sum_{m=0}^{\lfloor \log_2t\rfloor}\big(2^{\frac{2-c}{2}}\big)^m\lesssim t^{-1}+t^{\frac{c-\tau}{2}-1}t^{\frac{2-c}{2}}\lesssim t^{-\frac{\tau}{4}} \text{.}
	\]
	If $|\xi_2|\le t^{\tau-c}$, then since $(\xi_1,\xi_2)\in\mathbb{R}^2\setminus\widetilde{\mathfrak{M}}_t$, we have that $|\xi_1|>t^{2\tau-1}$. We will use the first derivative test here. For any $m\in\{0,\dotsc,\lfloor \log_2 t\rfloor\}$ and $s\in[2^m,2^{m+1}]$ we have that
	\[
	|(-\xi_2s^c-\xi_1s)'|\ge|\xi_1|-|c\xi_2s^{c-1}|> t^{2\tau-1}-ct^{\tau-c}2^{(m+1)(c-1)}\ge \frac{1}{2}t^{2\tau-1}\text{,}
	\]
	where for the last estimate we used the fact that $t\ge(c2^c)^{1/\tau}$. More precisely
	\[
	t^{2\tau-1}-ct^{\tau-c}2^{(m+1)(c-1)}\ge t^{2\tau-1}\big(1-c2^{c-1}t^{-\tau}\big)\ge \frac{1}{2}t^{2\tau-1}\text{,}
	\]
	since $	1-c2^{c-1}t^{-\tau}\ge 1/2\iff t\ge(c2^c)^{1/\tau}$.
	If $\xi_2\neq 0$, then the first derivative of the phase is monotonic and by van der Corput we obtain
	\[
	|\mathcal{L}_t(\xi_1,\xi_2)|\lesssim t^{-1}+t^{-1}
	\sum_{m=0}^{\lfloor \log_2 t\rfloor}\Big|\int_{2^m}^{\min\{2^{m+1},t\}}e(-\xi_2s^c-\xi_1 s)\d s\Big|\lesssim t^{-1}+t^{-1}
	\sum_{m=0}^{\lfloor \log_2 t\rfloor}t^{1-2\tau}\lesssim t^{-\frac{\tau}{4}}\text{.}
	\]
	If $\xi_2=0$ the estimate is trivial since
	\begin{equation}\label{done}
		|\mathcal{L}_t(\xi_1,\xi_2)|= t^{-1}\Big|\int_{0}^{t}e(-\xi_1 s)ds\Big|\lesssim t^{-1}|\xi_1|^{-1}\lesssim t^{-2\tau}\lesssim t^{-\frac{\tau}{4}}\text{.}
	\end{equation}
	We have shown that $\sup_{\xi\in\mathbb{R}^2\setminus\widetilde{\mathfrak{M}}_t}|\mathcal{L}_t(\xi)|\lesssim t^{-\frac{\tau}{4}}$ and the proof is complete.
\end{proof} 
\section{Pointwise Ergodic Theorem}
In this section we give a short proof of Theorem~$\ref{PET}$. For every $t\in\mathbb{N}$ and $f\colon\mathbb{Z}^2\to\mathbb{C}$ let 
\[
A_tf(x_1,x_2)\coloneqq\frac{1}{t}\sum_{n\le t}f(x_1-n,x_2-\lfloor n^c\rfloor)\text{.}
\]
By Calder\'on's transference principle and standard considerations it suffices to establish the following.
\begin{theorem}\label{quantestpoint}Assume $c\in(1,2)$, $p\in(1,\infty)$, $\lambda\in(1,\infty)$ and $r\in(2,\infty)$. Then there exist positive constants $C=C(c,p)$ and $C'=C'(c,p,\lambda,r)$ such that
	\[
	\big\|\sup_{t\in\mathbb{N}}|A_tf|\big\|_{\ell^p(\mathbb{Z}^2)}\le C\|f\|_{\ell^p(\mathbb{Z}^2)}
	\quad\text{and}
	\quad
	\|V^r(A_{\lfloor \lambda^n\rfloor}f:\,n\in\mathbb{N})\|_{\ell^2(\mathbb{Z}^2)}\le C' \|f\|_{\ell^2(\mathbb{Z}^2)}\text{.}
	\]
\end{theorem}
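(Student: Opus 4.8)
The plan is to follow the now-standard recipe for pointwise ergodic theorems via multiplier approximation, using Proposition~\ref{multapproxforpetclean} as the workhorse. On the Fourier side, the average $A_tf$ has multiplier $k_t(\xi_1,\xi_2)$, which Proposition~\ref{multapproxforpetclean} tells us is, up to an $\ell^2$-summable-in-scale error of size $t^{-\varepsilon/4}$, equal to the continuous multiplier $\mathcal{L}_t(\xi_1,\xi_2)=\tfrac1t\int_0^t e(-\xi_2 s^c-\xi_1 s)\,\d s$ cut off to the box $\widetilde{\mathfrak M}_t$. So I would first write $A_t = M_t + \mathcal{E}_t$ where $M_t$ has multiplier $\mathcal{L}_t$ (extended periodically, or rather its sampled version) localized near the origin, and $\mathcal{E}_t$ has multiplier $k_t-\mathcal{L}_t\cdot(\text{cutoff})$. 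The error piece satisfies $\sup_t|\widehat{\mathcal E_t}|\lesssim t^{-\varepsilon/4}$ pointwise, hence summing the estimate $\|\mathcal E_{2^k}f\|_{\ell^2}\lesssim 2^{-k\varepsilon/4}\|f\|_{\ell^2}$ over dyadic $k$ controls both $\sup_{t\in\mathbb N}|\mathcal E_tf|$ (after a further $\ell^2\hookrightarrow \ell^\infty$ or trivial-bound-plus-interpolation argument on the non-dyadic scales) and the $r$-variation $\|V^r(\mathcal E_{\lfloor\lambda^n\rfloor}f)\|_{\ell^2}$, for every $r\ge 1$, by crude triangle-inequality bounds on the variation seminorm. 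Thus all the work reduces to the main term $M_t$.

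For the main term I would invoke the general sampling principle — Proposition~2.1 from \cite{TransferenceLp} and Proposition~3.1 in \cite{MVV} — to transfer the corresponding estimates for the continuous averaging operators $\tilde A_t g(x)=\tfrac1t\int_0^t g(x_1-s,x_2-s^c)\,\d s$ on $\mathbb R^2$, which are exactly the averages along the curve $(s,s^c)$. The needed continuous bounds — the $L^p$ maximal inequality and the $L^2$ $r$-variational inequality along lacunary scales for $r>2$ — are precisely Theorem~1.5 in \cite{jim} (the second being a Jones–Seeger–Wright-type variation estimate for curves). Because the box $\widetilde{\mathfrak M}_t$ shrinks polynomially in $t$, the sampling is applied at a scale comparable to $t$, and the localization of $\mathcal L_t$ to $\widetilde{\mathfrak M}_t$ is harmless: it can be absorbed either by noting it agrees with the full continuous multiplier up to an error already controlled by the last estimate of Proposition~\ref{multapproxforpetclean}, or by a Littlewood–Paley projection whose symbol is a smooth bump on a comparable box. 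This yields the $\ell^2$ $r$-variation bound for $M_{\lfloor\lambda^n\rfloor}$ directly.

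For the $\ell^p$ maximal estimate I would not transfer $L^p$ directly but instead interpolate: the trivial bound $\|\sup_t|A_tf|\|_{\ell^\infty}\le\|f\|_{\ell^\infty}$ (these are averages of a nonnegative kernel of total mass $1$) together with any bound slightly past $\ell^1$ — or, more economically, the $\ell^2$ maximal bound obtained from the $\ell^2$ variation bound (since $\sup_n|A_{\lfloor\lambda^n\rfloor}f|\le |A_1 f|+V^1(\cdots)$ and $V^r\le\|f\|$ for $r>2$ suffices after the lacunary-to-full-scale passage using the positivity and near-monotonicity of the kernel) — and Riesz–Thorin interpolation gives $\ell^p$ for all $p\in(1,\infty)$. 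The passage from lacunary scales $\lfloor\lambda^n\rfloor$ to all $t\in\mathbb N$ in the maximal function is routine: for $t\in[\lambda^n,\lambda^{n+1}]$ one compares $A_tf$ with $A_{\lfloor\lambda^n\rfloor}f$ using that $|A_tf-A_sf|\lesssim (\lambda-1)\,\sup_u|A_uf|$-type bounds hold for averages with positive kernels, taking $\lambda$ close to $1$.

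The main obstacle I expect is not conceptual but bookkeeping: making the sampling principle apply cleanly despite the box localization of $\mathcal L_t$ and the mismatch between the discrete scale $t\in\mathbb N$ and the dilation scale of the continuous operator. One must check that the localized continuous multiplier $\eta(\xi/t^{2\varepsilon-1},\xi/t^{\varepsilon-c})\mathcal L_t(\xi)$ (periodized) still falls under the hypotheses of Proposition~2.1 in \cite{TransferenceLp}, which it does because the cutoff is a fixed smooth bump rescaled by a polynomial-in-$t$ factor — exactly the setting that sampling tolerates — but verifying the uniformity of the constants across scales is where care is needed. Everything else is a combination of Proposition~\ref{multapproxforpetclean}, the cited continuous results, and interpolation.
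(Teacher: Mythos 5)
Your proposal is correct and follows essentially the same route as the paper: approximate $k_t$ by the cut-off continuous multiplier via Proposition~\ref{multapproxforpetclean}, sum the $t^{-\varepsilon/4}$ errors over dyadic (resp.\ lacunary) scales, transfer the main term from Theorem~1.5 of \cite{jim} using Proposition~2.1 of \cite{TransferenceLp} and Proposition~3.1 of \cite{MVV}, and handle $p\neq 2$ for the error terms by interpolating each individual operator between its geometrically decaying $\ell^2$ bound and a uniform trivial $\ell^{p_0}$ bound. The one slip is your suggestion that Riesz--Thorin between $\ell^2$ and $\ell^\infty$ covers all $p\in(1,\infty)$ (it only reaches $p\ge 2$); the paper instead gets the main term's $\ell^p$ bound for the full range directly from the transferred continuous $L^p$ maximal inequality and interpolates the error operators with $p_0$ on the appropriate side of $2$, which your ``trivial-bound-plus-interpolation'' alternative already accommodates.
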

Before proceeding to the proof of Theorem~$\ref{quantestpoint}$ we state the corresponding result for the continuous counterpart of our averaging operator. For every $t\in[1,\infty)$ and $g\colon\mathbb{R}^2\to\mathbb{C}$ let 
\[
B_tg(x_1,x_2)\coloneqq\frac{1}{t}\int_{0}^tf(x_1-s,x_2- s^c)\d s\text{.}
\]
\begin{theorem}\label{contcounterpart} For any $p\in(1,\infty)$ and $r\in(2,\infty)$ there exists positive constants $C=C(p)$ and $C'=C(p,r)$ such that
	\begin{equation}\label{maxcontt}
		\big\|\sup_{t\in\mathbb{N}}|B_tf|\big\|_{L^p(\mathbb{R}^2)}\le C\|f\|_{L^p(\mathbb{R}^2)}
	\end{equation}
	and
	\begin{equation}\label{rvarcont}
		\big\|V^r\big(B_tf:\,t\in\mathbb{N}\big)\big\|_{L^p(\mathbb{R}^2)}\le C'\|f\|_{L^p(\mathbb{R}^2)}
		\text{.}
	\end{equation}
\end{theorem}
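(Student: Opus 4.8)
The plan is to deduce the maximal bound \eqref{maxcontt} from the variational bound \eqref{rvarcont}, and then to prove the latter by the standard variational machinery for averages along a non‑degenerate curve. Passing from $t\in\mathbb N$ to $t\in[1,\infty)$ only enlarges both the supremum and the $r$‑variation, so it suffices to treat continuous parameters $t\ge1$. Since $B_t$ is an average of translations of total mass one, $\|B_1f\|_{L^p(\mathbb R^2)}\le\|f\|_{L^p(\mathbb R^2)}$, and pointwise $\sup_{t\ge1}|B_tf|\le|B_1f|+V^r(B_tf:t\ge1)$; hence \eqref{maxcontt} follows at once from \eqref{rvarcont}. To attack \eqref{rvarcont} I first record the structure: writing $\gamma(s)=(s,s^c)$ one has $\widehat{B_tf}(\xi)=\Phi(t\xi_1,t^c\xi_2)\widehat f(\xi)$ with $\Phi(\eta_1,\eta_2)=\int_0^1 e(-\eta_1u-\eta_2u^c)\,\d u$, which up to the substitution $s=tu$ is exactly the multiplier $\mathcal L_t$ of Proposition~\ref{multapproxforpetclean}; the family is therefore governed by the anisotropic dilations $\delta_t(x_1,x_2)=(tx_1,t^cx_2)$, and the curve is non‑degenerate, i.e. $\gamma'(s)\wedge\gamma''(s)=c(c-1)s^{c-2}\neq0$ for $s>0$, because $c\in(1,2)$ and in particular $c\neq1$.

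The two analytic facts I would use are (a) $|\Phi(\eta)|\lesssim\min\{1,(|\eta_1|+|\eta_2|)^{-1/2}\}$, proved by the dyadic‑in‑$u$ decomposition together with van der Corput's second‑derivative test (using $(u^c)''\neq0$) and first‑derivative test, exactly as in the proofs of Propositions~\ref{expsummin} and \ref{multapproxforpetclean}; and (b) the identity $t\partial_t\big[\Phi(t\xi_1,t^c\xi_2)\big]=e(-t\xi_1-t^c\xi_2)-\Phi(t\xi_1,t^c\xi_2)$, obtained by differentiating $tB_tf=\int_0^t f(\cdot-\gamma(s))\,\d s$, which together with (a) gives $\big|t\partial_t\Phi(t\xi)\big|\lesssim\min\{t|\xi_1|+t^c|\xi_2|,1\}$. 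I then run the Jones–Seeger–Wright long/short splitting
\[
V^r\big(B_tf:t\ge1\big)\lesssim V^r\big(B_{2^k}f:k\ge0\big)+\Big(\sum_{k\ge0}V^r\big(B_tf:t\in[2^k,2^{k+1}]\big)^r\Big)^{1/r}\text{.}
\]
For the long variation, (a) and (b) show that the consecutive differences $\Phi(2^{k+1}\xi)-\Phi(2^k\xi)$ have modulus $\lesssim\min\{A_k,(1+A_k)^{-1/2}\}$ with $A_k=2^k|\xi_1|+2^{kc}|\xi_2|$, hence form a uniformly $\ell^2$‑bounded, Calderón–Zygmund‑admissible Littlewood–Paley system; Lépingle's inequality together with the associated square‑function estimate (valid on $L^p(\mathbb R^2)$ for all $p\in(1,\infty)$) gives $\|V^r(B_{2^k}f:k\ge0)\|_{L^p}\lesssim_{p,r}\|f\|_{L^p}$ for every $r>2$. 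For the short variation I would, in each block $[2^k,2^{k+1}]$, decompose $f$ into Littlewood–Paley pieces adapted to $\delta_{2^k}$, i.e. selecting the frequencies with $A_k\simeq2^j$: the pieces with $j\le0$ contribute $O(2^{j})$ (Lipschitz control of $t\mapsto B_tf$ on the block, from (b)) and are also dominated pointwise by the non‑isotropic Hardy–Littlewood maximal function, while for the pieces with $j>0$ one combines the fixed‑time $L^2$ smoothing $\|B_tf\|_{L^2}\lesssim2^{-j/2}\|f\|_{L^2}$ for $f$ spectrally localized to $A_k\simeq2^j$ (this is (a), the quantitative form of $\gamma'\wedge\gamma''\neq0$), the bound $\|t\partial_tB_tf\|_{L^2}\lesssim\|f\|_{L^2}$ from (b), and the Sobolev embedding $W^{1/2,2}\hookrightarrow V^r$ in the $t$‑variable — valid precisely because $r>2$ — to obtain a gain $2^{-\varepsilon(r)j}$ with $\varepsilon(r)>0$. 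Summing over $j$ and over the blocks $k$ — using $\ell^r\subseteq\ell^2$ to gather the blocks and the orthogonality of the $\delta_{2^k}$‑adapted projections — yields the $L^2$ bound for the short variation, and the $L^p$ bound for $p\neq2$ follows by interpolating the $L^2$‑gain against the crude bounds (with at most polynomial loss in $j$) furnished by Calderón–Zygmund theory and by the maximal estimate for the low‑frequency part.

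The one genuinely non‑routine step is this high‑frequency short‑variation estimate: one must use \emph{simultaneously} the decay of $\Phi$ — equivalently the curvature $\gamma'\wedge\gamma''\neq0$, which is exactly where $c\in(1,2)$, and so $c\neq1$, enters — and the oscillation in the $t$‑variable that $V^r$ for $r>2$ can absorb, since by (b) the derivative $t\partial_t\Phi(t\xi)$ retains the non‑decaying factor $e(-t\xi_1-t^c\xi_2)$ and hence no naive square‑function argument in $t$ is available. This is, however, precisely the circle of ideas of the Stein–Wainger theory of maximal averages along curves and its variational refinements, and Theorem~\ref{contcounterpart} is available in that literature (see e.g. Theorem~1.5 in \cite{jim}), so in practice one may simply invoke it rather than reprove it.
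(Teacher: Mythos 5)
Your proposal is correct and ultimately takes the same route as the paper: the variational estimate \eqref{rvarcont} is Theorem~1.5 of \cite{jim} (which you correctly note one may simply invoke, and whose proof you sketch accurately), and \eqref{maxcontt} follows from it via the same pointwise bound $\sup_{t}|B_tf|\le|B_1f|+V^r(B_tf)$ used in the paper.
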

\begin{proof}The proof of the estimate \eqref{rvarcont} can be found in \cite{jim}, see Theorem~1.5, and it is easy to see that it implies \eqref{maxcontt} since we have the pointwise estimate $\sup_{t\in\mathbb{N}}|B_tf(x)|\le |B_1f(x)|+V^4\big(B_tf(x):\,t\in\mathbb{N}\big)$, which in turn implies that $\big\|\sup_{t\in\mathbb{N}}|B_tf|\big\|_{L^p(\mathbb{R}^2)}\lesssim \|f\|_{L^p(\mathbb{R}^2)}+\big\|V^4\big(B_tf(x):\,t\in\mathbb{N}\big)\|_{L^p(\mathbb{R}^2)}$. 
\end{proof}
We are now ready to give a short proof of Theorem~$\ref{quantestpoint}$.
\begin{proof}[Proof of Theorem~$\ref{quantestpoint}$]Again we fix $c$ and $\tau\in(0,\min\{1/4,2-c\})$ as before and all  implied constants may depend on them. We begin with the maximal estimate. Firstly, note that $\sup_{t\in\mathbb{N}}|A_tf|\lesssim \sup_{n\in\mathbb{N}_0}A_{2^n}|f|$. Let $\eta\in\mathcal{C}_c^{\infty}(\mathbb{R}^2)$ be such that $\cic{1}_{[-1/4,1/4)^2}\le\eta\le\cic{1}_{[-1/2,1/2)^2}$, and let $\mathcal{L}_t(\xi_1,\xi_2):=\frac{1}{t}\int_{0}^{t}e(-\xi_2s^c-\xi_1 s)\d s$. Fix $n_0\in\mathbb{N}$ and note that by the monotone convergence theorem it suffices to show that
	\[
	\|\sup_{n\le n_0}A_{2^n}|f|\|_{\ell^p(\mathbb{Z}^2)}\lesssim_{p,c} \|f\|_{\ell^p(\mathbb{Z}^2)}
	\]
	where the implied constant is independent of the truncation parameter $n_0$. We note that 
	\begin{multline}
	\Big\|\sup_{n\le n_0}\big|\mathcal{F}^{-1}_{\mathbb{R}^2}\big[\eta\mathcal{L}_{2^n}\mathcal{F}_{\mathbb{R}^2}f\big]\big|\Big\|_{L^p(\mathbb{R}^2)}\le\Big\|\sup_{n\le n_0}\big|\mathcal{F}^{-1}_{\mathbb{R}^2}\big[\mathcal{L}_{2^n}\mathcal{F}_{\mathbb{R}^2}f\big]\big|\Big\|_{L^p(\mathbb{R}^2)}+\Big\|\sup_{n\le n_0}\big|\mathcal{F}^{-1}_{\mathbb{R}^2}\big[(1-\eta)\mathcal{L}_{2^n}\mathcal{F}_{\mathbb{R}^2}f\big]\big|\Big\|_{L^p(\mathbb{R}^2)}
	\\
	\le\big\|\sup_{t\in \mathbb{N}}|B_tf|\big\|_{L^p(\mathbb{R}^2)}+\Big(\sum_{n\in\mathbb{N}}\big\|\mathcal{F}^{-1}_{\mathbb{R}^2}\big[(1-\eta)\mathcal{L}_{2^n}\mathcal{F}_{\mathbb{R}^2}f\big]\big\|_{L^p(\mathbb{R}^2)}^p\Big)^{1/p}\text{.}
	\end{multline}
	The first summand is immediately bounded by the previous theorem, and for the second summand, by Proposition~$\ref{multapproxforpetclean}$  we have that
	\[
	\big\|\mathcal{F}^{-1}_{\mathbb{R}^2}\big[(1-\eta)\mathcal{L}_{2^n}\mathcal{F}_{\mathbb{R}^2}f\big]\big\|_{L^2(\mathbb{R}^2)}\le\|(1-\eta)\mathcal{L}_{2^n}\|_{L^{\infty}}\|f\|_{L^2(\mathbb{R}^2)}\lesssim 2^{-\frac{\tau}{4}n}\|f\|_{L^2(\mathbb{R}^2)}\text{,}
	\]
	because for large $n$ we have that $\supp(1-\eta)\subseteq\mathbb{R}^2\setminus\widetilde{\mathfrak{M}}_{2^n}$. This immediately implies that 
	\[
	\Big\|\sup_{n\le n_0}\big|\mathcal{F}^{-1}_{\mathbb{R}^2}\big[\eta\mathcal{L}_{2^n}\mathcal{F}_{\mathbb{R}^2}f\big]\big|\Big\|_{L^2(\mathbb{R}^2)}\lesssim \|f\|_{L^2(\mathbb{R}^2)}\text{.}
	\]
	
	If $p\neq 2$ we may fix $p_0\in(1,\infty)$ such that $p$ is strictly between $p_0$ and $2$. We have that
	\begin{multline}
	\big\|\mathcal{F}^{-1}_{\mathbb{R}^2}\big[(1-\eta)\mathcal{L}_{2^n}\mathcal{F}_{\mathbb{R}^2}f\big]\big\|_{L^{p_0}(\mathbb{R}^2)}\le
	\big\|\mathcal{F}^{-1}_{\mathbb{R}^2}\big[\mathcal{L}_{2^n}\mathcal{F}_{\mathbb{R}^2}f\big]\big\|_{L^{p_0}(\mathbb{R}^2)}+\big\|\mathcal{F}^{-1}_{\mathbb{R}^2}\big[\eta\mathcal{L}_{2^n}\mathcal{F}_{\mathbb{R}^2}f\big]\big\|_{L^{p_0}(\mathbb{R}^2)}
\\
	=\big\|B_{2^n}f\big\|_{L^{p_0}(\mathbb{R}^2)}+\big\|\mathcal{F}^{-1}_{\mathbb{R}^2}[\eta]*\mathcal{F}^{-1}_{\mathbb{R}^2}\big[\mathcal{L}_{2^n}\mathcal{F}_{\mathbb{R}^2}f\big]\big\|_{L^{p_0}(\mathbb{R}^2)}
	\\
	\le\|f\|_{L^{p_0}(\mathbb{R}^2)}+\|\mathcal{F}^{-1}_{\mathbb{R}^2}\eta\|_{L^1(\mathbb{R})}\big\|B_{2^n}f\big\|_{L^{p_0}(\mathbb{R}^2)}\lesssim_{\eta}\|f\|_{L^{p_0}(\mathbb{R}^2)}\text{.}
	\end{multline} 
	Riesz–Thorin interpolation between $p_0$ and $2$ yields a bound of the form
	\[
	\big\|\mathcal{F}^{-1}_{\mathbb{R}^2}\big[(1-\eta)\mathcal{L}_{2^n}\mathcal{F}_{\mathbb{R}^2}f\big]\big\|_{L^p(\mathbb{R}^2)}\lesssim_{\eta} 2^{-\frac{\tau(p,p_0)}{4}n}\|f\|_{L^p(\mathbb{R}^2)}\text{,}\quad\text{for some $\tau(p,p_0)>0$.}
	\]
	 Thus we obtain
	\[
	\Big\|\sup_{n\le n_0}\big|\mathcal{F}^{-1}_{\mathbb{R}^2}\big[\eta\mathcal{L}_{2^n}\mathcal{F}_{\mathbb{R}^2}f\big]\big|\Big\|_{L^p(\mathbb{R}^2)}\lesssim
	\|f\|_{L^p(\mathbb{R}^2)}+\Big(\sum_{n\in\mathbb{N}}2^{-\frac{p\tau(p,p_0)}{4}n}\|f\|^p_{L^p(\mathbb{R}^2)}\Big)^{1/p}\lesssim \|f\|_{L^p(\mathbb{R}^2)}\text{.}
	\]
	We may use the previous bound together with an application of Proposition~2.1 from \cite{TransferenceLp} to conclude that
	\begin{equation}\label{easyhelp}
		\Big\|\sup_{n\le n_0}\big|\mathcal{F}^{-1}_{\mathbb{Z}^2}\big[\eta\mathcal{L}_{2^n}\mathcal{F}_{\mathbb{Z}^2}f\big]\big|\Big\|_{\ell^p(\mathbb{Z}^2)}\lesssim \|f\|_{\ell^p(\mathbb{Z}^2)}\text{,}
	\end{equation}
	where the implied constant is independent of $n_0$. Finally, note that
	\begin{equation}\label{almostdonemax}
		\|\sup_{n\le n_0}A_{2^n}|f|\|_{\ell^p(\mathbb{Z}^2)}\le\big\|\sup_{n\le n_0}\big|\mathcal{F}^{-1}_{\mathbb{Z}^2}\big[\eta\mathcal{L}_{2^n}\mathcal{F}_{\mathbb{Z}^2}f\big]\big|\big\|_{\ell^p(\mathbb{Z}^2)}+\Big(\sum_{n\in\mathbb{N}_0}\big\|\mathcal{F}^{-1}_{\mathbb{Z}^2}\big[(k_{2^n}-\eta\mathcal{L}_{2^n})\mathcal{F}_{\mathbb{Z}^2}f\big]\big\|^p_{\ell^p(\mathbb{Z}^2)}\Big)^{1/p}
	\end{equation}
	and the first summand is immediately appropriately bounded while for the second summand an argument analogous to the one given earlier yields the desired estimate. More precisely, for $p=2$ note that 
	\[
	\big\|\mathcal{F}^{-1}_{\mathbb{Z}^2}\big[(k_{2^n}-\eta\mathcal{L}_{2^n})\mathcal{F}_{\mathbb{Z}^2}f\big]\big\|_{\ell^2(\mathbb{Z}^2)}\le\|k_{2^n}-\eta\mathcal{L}_{2^n}\|_{L^{\infty}([-1/2,1/2))}\|f\|_{\ell^2(\mathbb{Z})}\lesssim 2^{-\frac{\tau}{4}n}\|f\|_{\ell^2(\mathbb{Z})}\text{,}
	\] 
	where for the last estimate we have used Proposition~$\ref{multapproxforpetclean}$ and the fact that for large $n$, $\eta(x)=1$ for all $x\in\widetilde{\mathfrak{M}}_{2^n}$ and thus 
	\begin{equation}\label{reasonforclose}
		\|k_{2^n}-\eta\mathcal{L}_{2^n}\|_{L^{\infty}([-1/2,1/2))}\le \|k_{2^n}-\mathcal{L}_{2^n}\|_{L^{\infty}(\widetilde{\mathfrak{M}}_{2^n})}+\big\||k_{2^n}|+|\mathcal{L}_{2^n}|\big\|_{L^{\infty}([-1/2,1/2)\setminus \widetilde{\mathfrak{M}}_{2^n})}\lesssim 2^{-\frac{\tau}{4}n}\text{.}
	\end{equation}
	This implies that \[
	\|\sup_{n\le n_0}A_{2^n}|f|\|_{\ell^2(\mathbb{Z}^2)}\le \|f\|_{\ell^2(\mathbb{Z}^2)}\text{.}
	\]
	If $p\neq 2$ we again fix $p_0$ such $p$ is strictly between $p_0$ and $2$ and we interpolate between the good $\ell^2$-bounds and the trivial $\ell^{p_0}$-bounds by noting
	\[
	\big\|\mathcal{F}^{-1}_{\mathbb{Z}^2}\big[(k_{2^n}-\eta\mathcal{L}_{2^n})\mathcal{F}_{\mathbb{Z}^2}f\big]\big\|_{\ell^{p_0}(\mathbb{Z}^2)}\le \big\|A_{2^n}f\big\|_{\ell^{p_0}(\mathbb{Z}^2)}+\big\|\mathcal{F}^{-1}_{\mathbb{Z}^2}\big[(\eta\mathcal{L}_{2^n})\mathcal{F}_{\mathbb{Z}^2}f\big]\big\|_{\ell^{p_0}(\mathbb{Z}^2)}\lesssim \|f\|_{\ell^{p_0}(\mathbb{Z}^2)}\text{,} 
	\] 
	where we may use for example \eqref{easyhelp}. Interpolation gives an estimate
	\[
	\big\|\mathcal{F}^{-1}_{\mathbb{Z}^2}\big[(k_{2^n}-\eta\mathcal{L}_{2^n})\mathcal{F}_{\mathbb{Z}^2}f\big]\big\|_{\ell^{p}(\mathbb{Z}^2)}\lesssim 2^{-\tau(p,p_0)n}\|f\|_{\ell^p(\mathbb{Z}^2)}\text{,}
	\]
	which is view of \eqref{almostdonemax} gives $\|\sup_{n\le n_0}A_{2^n}|f|\|_{\ell^p(\mathbb{Z}^2)}\lesssim \|f\|_{\ell^p(\mathbb{Z}^2)}$ with implied constant independent of $n_0$ and thus we have established the following estimate
	\[
	\big\|\sup_{t\in\mathbb{N}}|A_tf|\big\|_{\ell^p(\mathbb{Z}^2)}\le C(c,p)\|f\|_{\ell^p(\mathbb{Z}^2)}
	\text{.}
	\]
	Now we fix $\lambda\in(1,\infty)$, $r\in(2,\infty)$ and focus on the $r$-variations. Note that by Theorem~\ref{contcounterpart} we will have
	\[
	\big\|V^r\big(\mathcal{F}^{-1}_{\mathbb{R}^2}\big[\mathcal{L}_{\lfloor\lambda^n\rfloor}\mathcal{F}_{\mathbb{R}^2}f\big]:\,n\in\mathbb{N}_0\big)\big\|_{L^2(\mathbb{R}^2)}=\big\|V^r\big(B_{\lfloor \lambda^n\rfloor}f:\,n\in\mathbb{N}_0\big)\big\|_{L^2(\mathbb{R}^2)}\lesssim_r\|f\|_{L^2(\mathbb{R}^2)}\text{.}
	\] 
	Let $\tilde{\eta}\in\mathcal{C}^{\infty}_c(\mathbb{R}^2)$ such that $\cic{1}_{[-2^{-7},2^{-7}]}\le \tilde{\eta}\le \cic{1}_{[-2^{-6},2^{-6}]}$, and note that according to Proposition~3.1 in \cite{MVV} (applied for $Q=1$ and $m=(0,0)$) we have that
	\begin{multline}
	\big\|V^r\big(\mathcal{F}^{-1}_{\mathbb{Z}^2}\big[\mathcal{L}_{\lfloor\lambda^n\rfloor}\tilde{\eta}\mathcal{F}_{\mathbb{Z}^2}f\big]:\,n\in\mathbb{N}_0\big)\big\|_{\ell^2(\mathbb{Z}^2)}
\\
\lesssim_r \big\|\mathcal{F}^{-1}_{\mathbb{Z}^2}\big[\tilde{\eta}\mathcal{F}_{\mathbb{Z}^2}f\big]\big\|_{\ell^2(\mathbb{Z}^2)}=\big\|\tilde{\eta}\mathcal{F}_{\mathbb{Z}^2}f\big\|_{L^2(\mathbb{T}^2)}\le\|\mathcal{F}_{\mathbb{Z}^2}f\|_{L^2(\mathbb{T}^2)}=\|f\|_{\ell^2(\mathbb{Z}^2)}\text{.}
	\end{multline}
	This immediately implies the result since $r>2$ and we have
	\begin{multline}
	\|V^r(A_{\lfloor \lambda^n\rfloor}f:\,n\in\mathbb{N}_0)\|_{\ell^2(\mathbb{Z})}\le \big\|V^r\big(\mathcal{F}^{-1}_{\mathbb{Z}^2}\big[\mathcal{L}_{\lfloor\lambda^n\rfloor}\tilde{\eta}\mathcal{F}_{\mathbb{Z}^2}f\big]:\,n\in\mathbb{N}_0\big)\big\|_{\ell^2(\mathbb{Z}^2)}
	\\
	+
	\Big\|\Big(\sum_{n\in\mathbb{N}_0}\big|\mathcal{F}^{-1}_{\mathbb{Z}^2}\big[k_{\lfloor\lambda^n\rfloor}\mathcal{F}_{\mathbb{Z}^2}f\big]-\mathcal{F}^{-1}_{\mathbb{Z}^2}\big[\mathcal{L}_{\lfloor\lambda^n\rfloor}\tilde{\eta}\mathcal{F}_{\mathbb{Z}^2}f\big]\big|\Big)^{1/2}\Big\|_{\ell^2(\mathbb{Z}^2)}\lesssim_r \|f\|_{\ell^2(\mathbb{Z}^2)}
\\	+\bigg(\sum_{n\in\mathbb{N}_0}\big\|\mathcal{F}^{-1}_{\mathbb{Z}^2}\big[(k_{\lfloor\lambda^n\rfloor}-\mathcal{L}_{\lfloor\lambda^n\rfloor}\tilde{\eta})\mathcal{F}_{\mathbb{Z}^2}f\big]\big\|_{\ell^2(\mathbb{Z}^2)}^2\bigg)^{1/2}\\
\lesssim
	\|f\|_{\ell^2(\mathbb{Z}^2)}+	\Big(\sum_{n\in\mathbb{N}_0}\big\|k_{\lfloor\lambda^n\rfloor}-\mathcal{L}_{\lfloor\lambda^n\rfloor}\tilde{\eta}\big\|_{L^\infty(\mathbb{T}^2)}^2\Big)^{1/2}\|f\|_{\ell^2(\mathbb{Z}^2)}\lesssim \|f\|_{\ell^2(\mathbb{Z}^2)}\text{,}
\end{multline}
	where for the last estimate we used the fact that for large $n$ we have that $\big\|k_{\lfloor\lambda^n\rfloor}-\mathcal{L}_{\lfloor\lambda^n\rfloor}\tilde{\eta}\big\|_{L^\infty(\mathbb{T}^2)}\lesssim \lambda^{-\frac{\tau}{4}n}$, and this can be justified in an analogous manner to \eqref{reasonforclose}. The proof is complete.
\end{proof}
\section{Reducing the proof of Theorem~$\ref{maintheorem}$ to the continuous case}
In this section we begin the proof of Theorem~$\ref{maintheorem}$, essentially reducing it to Theorem~1.1 in \cite{GuoOscillatory}. We will use the following Sobolev embedding lemma which can be found in \cite[Lemma 4.12]{benbook}; we state it here for the sake of clarity.\begin{lemma} \label{sobolev}
	Assume $(X,\mathcal{B},\mu)$ is a $\sigma$-finite measure space and $I\subseteq \mathbb{R}$ an interval. Assume $\big(F(x,t)\big)_{x\in X,t\in I}$ is a family of functions such that $F(x,\cdot)$ is absolutely continuous for $\mu$-a.e. $x\in X$. Then we have
	\[
	\Big\| \sup_{t \in I}|F(x,t)| \Big\|_{L^2_{\mu}(X)}\lesssim A+\left(Aa|I|\right)^{\frac{1}{2}}\text{,}
	\]
	where $A= \sup_{t \in I}\| F(x,t)\|_{L^2_{\mu}(X)}$ and $a= \sup_{t \in I}\| \partial_t F(x,t)\|_{L^2_{\mu}(X)}$, and the implied constant is absolute.
\end{lemma}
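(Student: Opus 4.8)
The plan is to reduce everything to a one-dimensional Sobolev embedding on the interval $I$ and then integrate over $X$. First I would recall the scalar Sobolev inequality on an interval: for an absolutely continuous $g\colon I\to\mathbb{C}$ one has the pointwise bound
\[
\sup_{t\in I}|g(t)|^2\lesssim |I|^{-1}\int_I|g(s)|^2\,\d s+\Big(\int_I|g(s)|^2\,\d s\Big)^{1/2}\Big(\int_I|g'(s)|^2\,\d s\Big)^{1/2}\text{,}
\]
which follows from writing $|g(t)|^2-|g(u)|^2=\int_u^t \frac{\d}{\d s}|g(s)|^2\,\d s=2\Re\int_u^t g'(s)\overline{g(s)}\,\d s$, averaging in $u$ over $I$, and applying Cauchy--Schwarz to the resulting integral term. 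Applying this with $g=F(x,\cdot)$ for $\mu$-a.e.\ fixed $x$ gives
\[
\sup_{t\in I}|F(x,t)|^2\lesssim |I|^{-1}\int_I|F(x,s)|^2\,\d s+\Big(\int_I|F(x,s)|^2\,\d s\Big)^{1/2}\Big(\int_I|\partial_s F(x,s)|^2\,\d s\Big)^{1/2}\text{.}
\]

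Next I would integrate this inequality in $x$ against $\mu$. The first term integrates, by Tonelli, to $|I|^{-1}\int_I\|F(\cdot,s)\|_{L^2_\mu(X)}^2\,\d s\le A^2$. For the second term I would apply the Cauchy--Schwarz inequality in $x$ (with respect to $\mu$) to split off the two square-root factors, obtaining
\[
\int_X\Big(\int_I|F(x,s)|^2\,\d s\Big)^{1/2}\Big(\int_I|\partial_s F(x,s)|^2\,\d s\Big)^{1/2}\,\d\mu(x)\le\Big(\int_X\int_I|F|^2\Big)^{1/2}\Big(\int_X\int_I|\partial_s F|^2\Big)^{1/2}\text{,}
\]
and then Tonelli again bounds $\int_X\int_I|F(x,s)|^2\,\d s\,\d\mu(x)\le A^2|I|$ and likewise $\int_X\int_I|\partial_sF|^2\le a^2|I|$. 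Combining, $\big\|\sup_{t\in I}|F(x,t)|\big\|_{L^2_\mu(X)}^2\lesssim A^2+A|I|\cdot a|I|^{?}$— here I must be careful with the powers of $|I|$: the second term is $\big(A^2|I|\big)^{1/2}\big(a^2|I|\big)^{1/2}=Aa|I|$, so altogether $\big\|\sup_t|F|\big\|_{L^2_\mu}^2\lesssim A^2+Aa|I|$, and taking square roots and using $\sqrt{x+y}\le\sqrt{x}+\sqrt{y}$ yields the claimed bound $A+(Aa|I|)^{1/2}$.

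The only genuinely delicate point is a measurability/finiteness issue rather than a hard estimate: one should first dispose of the trivial case $A=\infty$ or $a=\infty$, and when both are finite one needs $s\mapsto\|F(\cdot,s)\|_{L^2_\mu(X)}$ and $s\mapsto\|\partial_sF(\cdot,s)\|_{L^2_\mu(X)}$ to be measurable and the relevant double integrals to be finite so that Tonelli applies; this is where the hypothesis that $F(x,\cdot)$ is absolutely continuous $\mu$-a.e.\ (so $\partial_sF$ exists a.e.\ and the fundamental theorem of calculus holds fibrewise) is used, together with $\sigma$-finiteness of $\mu$ to justify Tonelli. Modulo these standard measure-theoretic verifications, the argument is exactly the two displays above.
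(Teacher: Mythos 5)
Your argument is correct: the fibrewise fundamental-theorem-of-calculus bound $\sup_{t\in I}|g(t)|^2\le |I|^{-1}\int_I|g|^2+2\big(\int_I|g|^2\big)^{1/2}\big(\int_I|g'|^2\big)^{1/2}$ followed by integration in $x$, Cauchy--Schwarz, and Tonelli gives exactly $A^2+Aa|I|$ for the square of the left-hand side, which is the claimed estimate. The paper does not prove this lemma itself but quotes it from \cite[Lemma 4.12]{benbook}, and your proof is essentially the standard argument for that result, so there is nothing to add beyond the measure-theoretic caveats you already flag.
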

Before appealing to this result, we discard all $\lambda$-frequencies in the supremum defining the $c$-modulated Carleson operators; this is the content of the following proposition, the proof of which will be given in the end of the section and is the most technical part of the present work. Similarly to the beginning of Section~3 we fix for convenience two parameters $\varepsilon\coloneqq\frac{2-c}{4}\in\big(0,\min\big\{\frac{1}{4},2-c\big\}\big)$ and $\nu\coloneqq \frac{\varepsilon}{8}$  and define for every $j\in\mathbb{N}$ 
\[
X_j\coloneqq \big\{\lambda\in[-1/2,1/2):\,|\lambda|\le 2^{(-c+\nu)j}\big\}\text{.}
\]
With this in mind the following proposition holds.
\begin{proposition}\label{awayfrom0}There exist an absolute positive constant $C$ such that for every $j\in\mathbb{N}$ and $\mathsf{i}\in\{0,1\}$ we have
\begin{equation}\label{discardlest}
\Big\|\sup_{\lambda\in[-1/2,1/2)\setminus X_j}\big|\big(\hat{f}(\cdot)m_j^{\mathsf{i}}(\cdot,\lambda)\big)^{\vee}(x)\big|
\Big\|_{\ell^2_{\d x}}\le C(c-1)^{-\frac{1}{2}}(2-c)^{-\frac{9}{4}} 2^{-\frac{\varepsilon}{256} j}\|f\|_{\ell^2}
\text{.}
\end{equation}
\end{proposition}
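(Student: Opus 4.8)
The plan is to fix $j$ and carry out a $TT^*$ argument to handle the supremum over the "minor" range $\lambda\in[-1/2,1/2)\setminus X_j$. Write $T_\lambda f=(\hat f\, m_j^{\mathsf{i}}(\cdot,\lambda))^\vee$. The key point is that we do not have any nontrivial control of $\partial_\lambda m_j^{\mathsf{i}}$ beyond the trivial bound $2^{cj}$ from Lemma~\ref{derivtrivial}, so we cannot apply a Sobolev embedding directly on this range of $\lambda$. Instead we should exploit that on the complement of $X_j$ the modulation $|\lambda|$ is at least $2^{(-c+\nu)j}$, and the exponential sum estimates of Proposition~\ref{exponentialweighted} give genuine decay in $|\lambda|$: precisely $|m_j^{\mathsf{i}}(\xi,\lambda)|\lesssim 2^{-j}\, j\,(2^{\frac{c+1}{3}j}+2^{(1-c/2)j}|\lambda|^{-1/2})$. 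On $X_j^{\mathsf{c}}$ the second term is at most $j\,2^{(1-c/2)j}2^{(c-\nu)j/2}2^{-j}=j\,2^{(-\nu/2)j}$, which is already small; the first term $j\,2^{(c-2)/3\, j}$ is small since $c<2$. So pointwise in $(\xi,\lambda)$ we have a bound like $2^{-\kappa j}$ for some $\kappa>0$ on the minor range. This alone does not suffice because of the supremum over the uncountable set of $\lambda$'s, which is exactly what the $TT^*$ / square-function machinery must absorb.

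The first step is therefore to linearize the supremum: replace $\sup_\lambda |T_\lambda f(x)|$ by $|T_{\lambda(x)}f(x)|$ for a measurable function $x\mapsto\lambda(x)$ (taking values in $X_j^{\mathsf{c}}$), and it suffices to bound $\|T_{\lambda(\cdot)}f\|_{\ell^2}$ uniformly in the choice of $\lambda(\cdot)$ by $C2^{-\rho j}\|f\|_{\ell^2}$. Then I would run the standard $TT^*$ computation: $\|T_{\lambda(\cdot)}f\|_{\ell^2}^2=\langle T_{\lambda(\cdot)}^* T_{\lambda(\cdot)}f, f\rangle$, and expand the kernel of $T_{\lambda(\cdot)}^* T_{\lambda(\cdot)}$. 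Writing $K_j^\lambda(n)=\widehat{\psi_j(\cdot)e(\lambda\,\mathsf{sign}(\cdot)^{\mathsf{i}}\lfloor|\cdot|^c\rfloor)}$ — i.e. $T_\lambda f = f * K_j^\lambda$ with $K_j^\lambda(n)=e(\lambda\,\mathsf{sign}(n)^{\mathsf{i}}\lfloor|n|^c\rfloor)\psi_j(n)$ supported on $|n|\simeq 2^j$ — the kernel of $T_{\lambda(\cdot)}^*T_{\lambda(\cdot)}$ at $(x,y)$ is $\sum_{z}\overline{K_j^{\lambda(z)}(z-x)}\,K_j^{\lambda(z)}(z-y)$. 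The diagonal-ish contribution (where $|x-y|\lesssim 2^j$) is controlled trivially by $\|K_j^\lambda\|_{\ell^1}^2\cdot\text{(number of relevant }z)$-type bookkeeping, giving an $O(1)$ bound — not good enough by itself, so the decay must come from the oscillation in $z$. The heart of the matter is estimating the off-diagonal sum $\sum_z \overline{K_j^{\lambda(z)}(z-x)}K_j^{\lambda(z)}(z-y)$ for fixed $x\neq y$ and using Schur's test: the exponential $e\big(\lambda(z)[\,\mathsf{sign}(z-x)^{\mathsf{i}}\lfloor|z-x|^c\rfloor-\mathsf{sign}(z-y)^{\mathsf{i}}\lfloor|z-y|^c\rfloor\,]\big)$ should be understood either via a further circle-method decomposition in the auxiliary frequency, or — more robustly — by integrating the dual statement: note $\|T_{\lambda(\cdot)}f\|_{\ell^2}^2 \le \int_{X_j^{\mathsf c}}\|T_\lambda f\|_{\ell^2}^2\,d\mu_f(\lambda)$ is false in general, so one genuinely needs the pointwise-in-$z$ version.

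A cleaner route, which I expect to be the one actually used, is to avoid linearization and instead bound the supremum by a square function after a Sobolev embedding \emph{on each dyadic $\lambda$-block}. Partition $X_j^{\mathsf c}$ into dyadic annuli $A_k=\{2^{k}\le|\lambda|<2^{k+1}\}$ for $(-c+\nu)j\le k\le -1$, so there are $O(j)$ blocks. On each $A_k$ apply Lemma~\ref{sobolev} with $I=A_k$ (length $|I|\simeq 2^k$): there $A=\sup_{\lambda\in A_k}\|T_\lambda f\|_{\ell^2}\le \sup_{\lambda\in A_k}\|m_j^{\mathsf i}(\cdot,\lambda)\|_{L^\infty}\|f\|_{\ell^2}\lesssim 2^{-j}j\,(2^{\frac{c+1}{3}j}+2^{(1-c/2)j}2^{-k/2})\|f\|_{\ell^2}$ by Proposition~\ref{exponentialweighted}, and $a=\sup_\lambda\|\partial_\lambda T_\lambda f\|_{\ell^2}\lesssim 2^{cj}\|f\|_{\ell^2}$ by Lemma~\ref{derivtrivial}. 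Hence $\|\sup_{\lambda\in A_k}|T_\lambda f|\|_{\ell^2}\lesssim A+(Aa\,2^k)^{1/2}$. One then checks that for every admissible $k$ the worst term is $(A\cdot 2^{cj}\cdot 2^k)^{1/2}$; plugging $A\lesssim 2^{-j}j\,2^{(1-c/2)j}2^{-k/2}\|f\|_{\ell^2}$ (the dominant piece when $k$ is most negative, i.e. $2^k\simeq 2^{(-c+\nu)j}$) gives $\big(2^{-j}j\,2^{(1-c/2)j}2^{-k/2}\cdot 2^{cj}\cdot 2^{k}\big)^{1/2}\|f\|_{\ell^2}=\big(j\,2^{(c/2)j}2^{k/2}\big)^{1/2}\|f\|_{\ell^2}$, and since $2^k\le 2^{(-c+\nu)j}$ this is $\lesssim (j\,2^{(\nu/2)j})^{1/2}\|f\|_{\ell^2}$ — which is \emph{growing}, not decaying. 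This shows the naive per-block Sobolev bound is too lossy, and the extra decay has to be squeezed out by summing the $\ell^2$ norms of differences rather than suprema, i.e. by combining Sobolev on each block with the genuine $2^{-\kappa j}$ pointwise decay and a careful optimization of $\nu$ against $\varepsilon$; I expect this balancing of the $\varepsilon,\nu,c$ exponents, together with handling the derivative only through the trivial bound, to be the main obstacle, and it is presumably resolved by the "delicate $TT^*$ argument" the introduction advertises — replacing the per-block derivative bound $2^{cj}$ by an averaged/almost-orthogonality estimate across blocks that beats it.
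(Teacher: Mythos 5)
There is a genuine gap. You correctly identify the right overall strategy --- linearize the supremum and run a $TT^*$ argument --- and your computation showing that a per-block Sobolev embedding on $X_j^{\mathsf c}$ necessarily fails is accurate (this is exactly why the paper does \emph{not} use Lemma~\ref{sobolev} on this range, reserving it for $\lambda\in X_j$ where $|I|\cdot a$ is small). But your proposal stops precisely where the proof begins: you never estimate the off-diagonal kernel of $T^*_{\lambda(\cdot)}T_{\lambda(\cdot)}$, you only say it ``should be understood'' via some further decomposition. Two concrete ingredients are missing, and without them the kernel estimate cannot be made. First, the floor function must be removed \emph{before} the $TT^*$ step, using the expansion $e(-x\{y\})=\sum_{|m|\le M}c_m(x)e(my)+O(\min\{1,(M\|y\|)^{-1}\})$ with $M=\lfloor 2^{\delta_1 j}\rfloor$; the error term yields a convolution kernel with $\ell^1$-norm $\lesssim 2^{-\rho_1 j}$, and the main term replaces the phase $\lambda\,\mathsf{sign}(n)^{\mathsf i}\lfloor|n|^c\rfloor$ by the smooth phase $(\lambda\,\mathsf{sign}(n)^{\mathsf i}+m)|n|^c$ at the cost of enlarging the modulation range to $|\lambda|\lesssim 2^{\delta_1 j}$. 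The kernel you wrote down still contains $\lfloor|z-x|^c\rfloor$, to which van der Corput does not apply.

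Second, after linearization one must dyadically pigeonhole $\lambda 2^{cj}$ into $O(j)$ ranges $[2^r,2^{r+1})$ with $\nu j-2\le r\le (c+2\delta_1)j+2$, so that within each piece $\lambda_x\simeq\lambda_y\simeq 2^{r-cj}$. This is what makes the second-derivative computation for the phase $f(t)=\lambda_x(t-x)^c-\lambda_y(t-y)^c$ tractable: by the mean value theorem $|f''(t)|\simeq 2^{r-3j}\,|(y-x)-\varepsilon(x,y)(t-y)|$ with $\varepsilon(x,y)=(\lambda_y/\lambda_x)^{1/(c-2)}-1$, and one splits into the cases $\varepsilon(x,y)\le 2^{-\delta_2 j-100}$ (where $|f''|\simeq 2^{r-2j}$ up to a loss $2^{\delta_2 j}$ and van der Corput gives $|K(x,y)|\lesssim 2^{-(1+\rho')j}$ for $|x-y|\ge 2^{(1-\delta_2)j}$) and $\varepsilon(x,y)\ge 2^{-\delta_2 j-100}$ (where one excises a short exceptional interval on which $f''$ may vanish, bounds it by its length, and applies van der Corput on the two complementary intervals). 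The resulting pointwise bound $|K(x,y)|\lesssim 2^{-(1+\rho_4)j}\cic{1}_{2^{(1-\delta_2)j}\le|x-y|\le 2^j}+2^{-j}\cic{1}_{|x-y|\le 2^{(1-\delta_2)j}}$ gives $\|TT^*\|_{\ell^2\to\ell^2}\lesssim 2^{-2\rho_3 j}$ by Schur's test, and the $O(j)$ blocks and the floor-removal error are summed trivially. None of this case analysis, nor the choice and interplay of the parameters $\delta_1,\delta_2,\nu'$ against $\nu$ and $2-c$, appears in your proposal, so as written it is a plan rather than a proof.
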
 
Assuming momentarily the validity of the previous proposition we obtain for $\mathsf{i}\in\{0,1\}$
\begin{multline}\label{firstinterpolation}
	\bigg\|\sup_{\lambda \in [-1/2,1/2)}\Big| \sum_{n \neq 0}f(x-n) \frac{e(\lambda \mathsf{sign}^{\mathsf{i}}(n) \lfloor |n|^{c} \rfloor)}{n}\Big|\bigg\|_{{\ell_{\d x}^p}}
	\\
	\le\bigg\|\sup_{\lambda\in[-1/2,1/2)}\Big|\Big(\sum_{j\in\mathbb{N}}\hat{f}(\cdot)m_j^{\mathsf{i}}(\cdot,\lambda)\cic{1}_{X_j}(\lambda)\Big)^{\vee}(x)\Big|\bigg\|_{\ell_{\d x}^p}+\sum_{j\in\mathbb{N}}\bigg\|\sup_{\lambda\in[-1/2,1/2)\setminus X_j}\Big|\big(\hat{f}(\cdot)m_j^{\mathsf{i}}(\cdot,\lambda)\big)^{\vee}(x)\Big|\bigg\|_{\ell_{\d x}^p}
	\\
	\lesssim_{c,p}\bigg\|\sup_{\lambda\in[-1/2,1/2)}\Big|\Big(\sum_{j\in\mathbb{N}}\hat{f}(\cdot)m_j^{\mathsf{i}}(\cdot,\lambda)\cic{1}_{X_j}(\lambda)\Big)^{\vee}(x)\Big|\bigg\|_{\ell_{\d x}^p}+\|f\|_{\ell^p}\text{,}
\end{multline}
where to obtain the last inequality we argue as follows. For $p=2$, \eqref{discardlest} yields that 
\begin{multline}\label{absoluteconstants}
\sum_{j\in\mathbb{N}}\Big\|\sup_{\lambda\in[-1/2,1/2)\setminus X_j}\big|\big(\hat{f}(\cdot)m_j^{\mathsf{i}}(\cdot,\lambda)\big)^{\vee}(x)\big|
\Big\|_{\ell^2_{\d x}}
\\
\lesssim (c-1)^{-\frac{1}{2}}(2-c)^{-\frac{9}{4}}\Big(\sum_{j\in\mathbb{N}} 2^{-\frac{2-c}{256} j}\Big)\|f\|_{\ell^2}\lesssim (c-1)^{-\frac{1}{2}}(2-c)^{-\frac{13}{4}}\|f\|_{\ell^2}\text{,}
\end{multline}
where the implicit constant is absolute and we used that for all $\theta\in(0,1)$ we have
\begin{equation}\label{sumwithdecaynow}
\sum_{j\in\mathbb{N}} 2^{-\theta j}\le\frac{1}{1-2^{-\theta }}=\frac{1}{1-e^{-\theta\log(2) }}\le \frac{e}{\theta\log(2)}\lesssim \theta^{-1}\text{, since $1-e^{-x}\ge \frac{x}{e}$ for $x\in[0,1]$.
}
\end{equation}
 We note that the following pointwise estimate holds
\begin{equation}\label{maxHLd}
	\sup_{\lambda\in[-1/2,1/2)}\big|\big(\hat{f}(\cdot)m_j^{\mathsf{i}}(\cdot,\lambda)\big)^{\vee}(x)\big|\le\sum_{n\in\mathbb{Z}}|f(x-n)\psi_j(n)|\lesssim Mf(x)\text{,}
\end{equation}
where $Mf$ is the standard discrete Hardy--Littlewood maximal operator. We obtain weak-type $(1,1)$ and strong $(\infty,\infty)$ estimates and interpolation with the $\ell^2$-estimates of \eqref{discardlest} yields
\begin{equation}\label{afterinterpolation}
	\bigg\|\sup_{\lambda\in[-1/2,1/2)\setminus X_j}\Big|\big(\hat{f}(\cdot)m_j^{\mathsf{i}}(\cdot,\lambda)\big)^{\vee}(x)\Big|\bigg\|_{\ell_{\d x}^p}\lesssim_{c,p} 2^{-\rho_p j}\|f\|_{\ell^p}\text{, for some $\rho_p>0$,}
\end{equation}
and the last inequality in \eqref{firstinterpolation} is justified.

We are left with the task of estimating the first term in the last line of \eqref{firstinterpolation}.  We will use Proposition~$\ref{approxcom}$ together with a similar reasoning. Note that
\begin{multline}\label{inputapprox}
	\bigg\|\sup_{\lambda\in[-1/2,1/2)}\Big|\Big(\sum_{j\in\mathbb{N}}\hat{f}(\cdot)m_j^{\mathsf{i}}(\cdot,\lambda)\cic{1}_{X_j}(\lambda)\Big)^{\vee}(x)\Big|\bigg\|_{\ell_{\d x}^p}\\
	\le \bigg\|\sup_{\lambda\in [-1/2,1/2)}\Big|\Big(\sum_{j\in\mathbb{N}}\hat{f}(\cdot)H_j^{\mathsf{i}}(\cdot,\lambda)\cic{1}_{X_j}(\lambda)\Big)^{\vee}(x)\Big|\bigg\|_{\ell_{\d x}^p}+\sum_{j\in\mathbb{N}}\bigg\|\sup_{\lambda\in X_j}\big|\big(\hat{f}(\cdot)E_j^{\mathsf{i}}(\cdot,\lambda)\big)^{\vee}(x)\big|\bigg\|_{\ell_{\d x}^p}\text{,}
\end{multline}
where the definition of $H_j^{\mathsf{i}}$ and $E_j^{\mathsf{i}}$ can be found in Proposition~$\ref{approxcom}$. By Lemma~$\ref{sobolev}$ we get that
\begin{equation}
\frac{\bigg\|\sup_{\lambda\in X_j}\big|\big(\hat{f}(\cdot)E_j^{\mathsf{i}}(\cdot,\lambda)\big)^{\vee}(x)\big|\bigg\|_{\ell_{dx}^2}}{(c-1)^{-\frac{1}{2}}}\lesssim \big(2^{-\frac{\varepsilon}{4} j}+(2^{-\frac{\varepsilon}{4} j}2^{cj}2^{(-c+\nu)j})^{1/2}\big)\|f\|_{\ell^2}\lesssim 2^{-\frac{\varepsilon}{16}j}\|f\|_{\ell^2}\text{,}
\end{equation}
since $\nu=\varepsilon/8$. For $p=2$ we can immediately use the above estimate to bound the second summand in the last line of \eqref{inputapprox} as follows
\begin{equation}\label{est512}
\sum_{j\in\mathbb{N}}\bigg\|\sup_{\lambda\in X_j}\big|\big(\hat{f}(\cdot)E_j^{\mathsf{i}}(\cdot,\lambda)\big)^{\vee}(x)\big|\bigg\|_{\ell_{dx}^2}\lesssim (c-1)^{-\frac{1}{2}}\Big(\sum_{j=0}^{\infty}2^{-\frac{\varepsilon}{16}j}\Big)\|f\|_{\ell^2}\lesssim (2-c)^{-1}(c-1)^{-\frac{1}{2}}\|f\|_{\ell^2}\text{,}
\end{equation}
where we took into account that $\varepsilon=(2-c)/4$ and used a calculation as in \eqref{sumwithdecaynow}.

For $p\neq 2$ we  use interpolation between the $\ell^2$-estimates and trivial estimates to obtain an estimate of the form 
\begin{equation}\label{noreapeats}
\bigg\|\sup_{\lambda\in X_j}\big|\big(\hat{f}(\cdot)E_j^{\mathsf{i}}(\cdot,\lambda)\big)^{\vee}(x)\big|\bigg\|_{\ell_{dx}^{p}}\lesssim_{c,p} 2^{-\rho_p j}\|f\|_{\ell^p}\text{, for some $\rho_p>0$,}
\end{equation}
This can be achieved by taking into account that $E_j^{\mathsf{i}}=m_j^{\mathsf{i}}-H_j^{\mathsf{i}}$, the estimate \eqref{maxHLd} and by establishing
\[
\bigg\|\sup_{\lambda\in X_j}\big|\big(\hat{f}(\cdot)H_j^{\mathsf{i}}(\cdot,\lambda)\big)^{\vee}(x)\big|\bigg\|_{\ell_{dx}^{p_0}}\lesssim_{c,p_0}\|f\|_{\ell^{p_0}}\text{,} 
\]
which can be easily done by proving the corresponding one for $L^{p_0}$ with an argument as in \eqref{maxHLd} and then using the sampling principle in Proposition~2.1 in \cite{TransferenceLp} as in \cite{carlesonprimes}, see page 1052.

The estimate \eqref{noreapeats} immediately appropriately bounds the second summand in the last line of \eqref{inputapprox} and all that is left is to bound the first summand i.e. to establish the estimate
\[
\bigg\|\sup_{\lambda\in [-1/2,1/2)}\Big|\Big(\sum_{j\in\mathbb{N}}\hat{f}(\cdot)H_j^{\mathsf{i}}(\cdot,\lambda)1_{X_j}(\lambda)\Big)^{\vee}(x)\Big|\bigg\|_{\ell_{\d x}^p}\lesssim_{c,p} \|f\|_{\ell^p}\text{.}
\]
Again, appealing to Proposition~2.1 in \cite{TransferenceLp} as in \cite{carlesonprimes}, it suffices to establish its continuous analogue, namely, the following
\begin{equation}
	\bigg\|\sup_{\lambda\in [-1/2,1/2)}\Big|\mathcal{F}^{-1}_{\mathbb{R}}\Big[\sum_{j\in\mathbb{N}}\mathcal{F}_{\mathbb{R}}[f](\cdot)H_j^{\mathsf{i}}(\cdot,\lambda)1_{X_j}(\lambda)\Big]\Big|\bigg\|_{L^p(\mathbb{R})}\lesssim_{c,p} \|f\|_{L^p(\mathbb{R})}\text{,}
\end{equation}
which is the content of Proposition~$\ref{ContRedExpl}$. All complications associated with the discrete nature of our problem have been addressed.

Moreover, it is clear from our proof $\big(\eqref{firstinterpolation},\eqref{absoluteconstants},\eqref{inputapprox},\eqref{est512}\big)$,  that for the case of $p=2$ we have shown that
\begin{multline}\label{Sec5absconstant}
\sup_{f\in\ell^2:\,\|f\|_{\ell^2}\le 1}\bigg\|\sup_{\lambda \in [-1/2,1/2)}\Big| \sum_{n \neq 0}f(x-n) \frac{e(\lambda \mathsf{sign}^{\mathsf{i}}(n) \lfloor |n|^{c} \rfloor)}{n}\Big|\bigg\|_{{\ell_{\d x}^2}}
\lesssim (c-1)^{-\frac{1}{2}}(2-c)^{-\frac{13}{4}}
\\
+\sup_{f\in L^2(\mathbb{R}):\,\|f\|_{L^2(\mathbb{R})}\le 1} \bigg\|\sup_{\lambda\in [-1/2,1/2)}\Big|\mathcal{F}^{-1}_{\mathbb{R}}\Big[\sum_{j\in\mathbb{N}}\mathcal{F}_{\mathbb{R}}[f](\cdot)H_j^{\mathsf{i}}(\cdot,\lambda)1_{X_j}(\lambda)\Big]\Big|\bigg\|_{L^2(\mathbb{R})}\text{,}
\end{multline}
and unsuprisingly the constant from the continuous counterpart corresponding to the second summand above does not introduce any blowup as $c\to 2^-$ and is in fact $\lesssim( c-1)^{-\frac{1}{2}}$ as will be apparent from the proof of Proposition~$\ref{ContRedExpl}$ is Section~6.
We end this section by proving Proposition~$\ref{awayfrom0}$. 
\begin{proof}[Proof of Proposition~$\ref{awayfrom0}$.]
	As in the beginning of Lemma~$\ref{continuousvdc}$ it suffices to prove the estimate for $j\in\mathbb{N}$ such that $(2-c) j\ge 2^{20}$, since for the rest it is immediate. We remove the floor function. For $n\neq 0$, $\lambda\in[-1/2,1/2)$ and $M\in\mathbb{N}$ we have
	\[
\begin{split}
		e(\lambda \mathsf{sign}^{\mathsf{i}}(n) \lfloor |n|^c \rfloor )&=e(\lambda \mathsf{sign}^{\mathsf{i}}(n)|n|^c) e(-\lambda \mathsf{sign}^{\mathsf{i}}(n) \left\{ |n|^c\right\} )\\ &= \sum_{|m| \leq M}c_m(\lambda \mathsf{sign}^{\mathsf{i}}(n))e\big((m+\lambda \mathsf{sign}^{\mathsf{i}}(n))|n|^c\big)+O \left( \min\left\{ 1,\frac{1}{M  \big\| |n|^c \big\|}\right\} \right)\text{,}
\end{split}
	\] 
	as in the beginning of the proof of Proposition~$\ref{expsummin}$. We apply the previous estimate with $M=\lfloor 2^{\varepsilon j}\rfloor$ and obtain
	\begin{multline}\label{splitinsigns}
		\sup_{\lambda \in X_j^c}\big|\big(\hat{f}(\cdot)m_j^{ \mathsf{i} }(\cdot,\lambda)\big)^{\vee}(x)\big|=\sup_{\lambda\in X_j^c}\Big|\sum_{n\neq 0}f(x-n)e(\lambda \mathsf{sign}^{\mathsf{i}}(n) \lfloor |n|^c\rfloor)\psi_j(n)\Big|
		\\
		\lesssim
		\sup_{\lambda\in X_j^c}\Big|\sum_{n\neq 0}\sum_{|m|\le M}c_m(\lambda \mathsf{sign}^{\mathsf{i}}(n))f(x-n)e((m+\lambda \mathsf{sign}^{\mathsf{i}}(n))|n|^c)\psi_j(n)\Big|
		\\
		+\sum_{n\neq 0}|f(x-n)\psi_j(n)|\min\bigg\{ 1,\frac{1}{M  \big\| |n|^c \big\|}\bigg\}
		\\	\lesssim\sup_{\lambda\in X_j^c }  \left| \sum_{n \neq 0}\sum_{|m| \leq M}c_{m}(\lambda \mathsf{sign}^{\mathsf{i}}(n))f(x-n) e((\lambda \mathsf{sign}^{\mathsf{i}}(n)+m) |n|^c) \psi_j(n) \right|+ |f|*L_j(x)\text{,}
	\end{multline}
	where 
	\[
	L_j(x)=2^{-j}1_{2^{j-3}\le |x|\le 2^{j-1}}\min\bigg\{1,\frac{1}{M\big\||x|^c\big\|}\bigg\}\text{,}
	\]
and notice that by taking into account our choice of $M$, the estimate \eqref{mindyadic} and since $\varepsilon j=\frac{2-c}{4}j\ge1$ and  $\log(M)\le \log(2^{\varepsilon j}+1)\le \log(2^{\varepsilon j+1})\le \log(2^{2\varepsilon j})\lesssim (2-c)j$,  we get
\begin{multline}
	\|L_j\|_{\ell^1}\lesssim 2^{-j}\sum_{2^{j-3}\le n\le 2^{j-1}}\min\bigg\{1,\frac{1}{M\|n^c\|}\bigg\}\lesssim (c-1)^{-\frac{1}{2}}2^{-j}j\varepsilon \big(2^{(1-\varepsilon )j}+2^{(c/2+\varepsilon /2)j}\big)
\\
\lesssim (c-1)^{-\frac{1}{2}}\Big(j\varepsilon2^{-\varepsilon j}+j\varepsilon 2^{\frac{c+\varepsilon-2}{2}j}\Big)\lesssim(c-1)^{-\frac{1}{2}}j\varepsilon 2^{-\varepsilon j}\lesssim(c-1)^{-\frac{1}{2}}2^{-\frac{\varepsilon}{2} j}\text{,}
\end{multline}
since $c+\varepsilon-2<-2\varepsilon$, because $\varepsilon<\frac{2-c}{3}$, and by \eqref{cleantrick}. Thus $\|L_j\|_{\ell^1}\lesssim (c-1)^{-\frac{1}{2}}2^{-\frac{\varepsilon}{2} j}$. Now by defining $T_jf\coloneqq f*L_j$, we see that 
\[
\|T_j\|_{\ell^2\to\ell^2}\lesssim (c-1)^{-\frac{1}{2}}2^{-\frac{\varepsilon}{2} j}\text{.}
\]
For $\tau\in\{+,-\}$, let 
	\[
	\psi_j^{\tau}=\left\{
\begin{array}{ll}
      \psi_j \cic{1}_{(0,\infty) }\text{,} & \text{if }\tau=+ \\
      \psi_j \cic{1}_{(-\infty,0) }\text{,} &\text{if }\tau=-\text{.}\\
\end{array} 
\right.
	\]
Note that the first summand in the last line of \eqref{splitinsigns} is bounded by
	\begin{multline}
	\sum_{\mathsf{\tau} \in \left\{+,-\right\}}	\sup_{\lambda\in X_j^c } \sum_{|m| \leq M}\frac{1}{|m|+1} \left| \sum_{n \neq 0}f(x-n) e((\lambda \mathsf{sign}^{\mathsf{i}}(n)+m) |n|^c) \psi_j^{\tau}(n) \right|
		\\
		\lesssim \sum_{\mathsf{\tau} \in \left\{+,-\right\}} \sup_{2^{(-c+\nu)j}<|\lambda|\le M+1/2}\left|\sum_{n \neq 0}f(x-n) e(\lambda |n|^c) \psi_j^{\tau}(n) \right|\log(M+1)
		\\
\lesssim (2-c)j \max_{\mathsf{\tau} \in \left\{+,-\right\}} \sup_{2^{(-c+\nu)j}<|\lambda|\le 2^{2\varepsilon j}}\Big|\sum_{n \neq 0}f(x-n) e(\lambda |n|^c) \psi_j^{\tau}(n) \Big|\text{,}
	\end{multline}
since $\varepsilon j=\frac{2-c}{4}j\ge1$ and  $\log(M+1)\le \log(2^{\varepsilon j}+1)\le \log(2^{\varepsilon j+1})\le \log(2^{2\varepsilon j})\lesssim (2-c)j$. Combining everything yields
	\begin{multline}\label{firstboundin5.2}
	\Big\|\sup_{\lambda\in X^c_j}\big|\big(\hat{f}(\cdot)m_j^{\mathsf{i}}(\cdot,\lambda)\big)^{\vee}(x)\big|
	\Big\|_{\ell^2_{\d x}}\lesssim (2-c) j  \max_{{\mathsf{\tau} \in \left\{+,-\right\}}} \bigg\|\sup_{2^{(-c+\nu)j}<|\lambda|\le 2^{2\varepsilon j}}\Big|\sum_{n \neq 0}f(x-n) e(\lambda  |n|^c) \psi_j^{\tau}(n) \Big|\bigg\|_{\ell^2_{\d x}}
\\
+(c-1)^{-\frac{1}{2}}2^{-\frac{\varepsilon}{2} j}\|f\|_{\ell^2}\text{,}
	\end{multline}
	and we focus on the first term of the right hand side. We claim that it suffices to prove that for each $\tau \in  \left\{+,-\right\}$ we have
\begin{equation}\label{est51goal}
\bigg\|\sup_{2^{\nu j}<|\lambda2^{cj}|\le 2^{(c+2\varepsilon)j}}\Big|\sum_{n \neq 0}f(x-n) e(\lambda |n|^c) \psi_j^{\tau}(n) \Big|\bigg\|_{\ell^2_{\d x}}\lesssim (2-c)^{-1}j \big((2-c)(c-1)\big)^{-\frac{1}{4}}2^{-\frac{\varepsilon}{128}j}\|f\|_{\ell^2}\text{,}
\end{equation}
To see why this is the case note that once we establish \eqref{est51goal} we may return to \eqref{firstboundin5.2} to obtain
\begin{multline}\label{firstboundin5.21}
	\Big\|\sup_{\lambda\in X^c_j}\big|\big(\hat{f}(\cdot)m_j^{\mathsf{i}}(\cdot,\lambda)\big)^{\vee}(x)\big|
	\Big\|_{\ell^2_{\d x}}\lesssim  j^2  \big((2-c)(c-1)\big)^{-\frac{1}{4}}2^{-\frac{\varepsilon}{128}j}\|f\|_{\ell^2}
+(c-1)^{-\frac{1}{2}}2^{-\frac{\varepsilon}{2} j}\|f\|_{\ell^2}
\\
\lesssim  (2-c)^{-\frac{9}{4}}(c-1)^{-\frac{1}{4}}(\varepsilon j)^22^{-\frac{\varepsilon}{128}j}\|f\|_{\ell^2}
+(c-1)^{-\frac{1}{2}}2^{-\frac{\varepsilon}{2} j}\|f\|_{\ell^2}
\lesssim (2-c)^{-\frac{9}{4}}(c-1)^{-\frac{1}{2}}2^{-\frac{\varepsilon}{256}j}\|f\|_{\ell^2}
\text{,}
\end{multline}
where we took into account the fact that $\varepsilon=\frac{2-c}{4}$ and used the estimate $x^22^{-x}\lesssim 2^{-x/2}$.

To establish \eqref{est51goal} it will be important to have further control of the scale of $2^{cj}\lambda$, so we perform a further decomposition. Note that
\begin{multline}\label{refinewithr}
\sup_{2^{\nu j}<\lambda2^{cj}\le 2^{(c+2\varepsilon )j}}\Big|\sum_{n \neq 0}f(x-n) e(\lambda |n|^c) \psi_j^{\tau}(n) \Big|
\\
\le\sum_{r\in(2-c)\mathbb{Z}}\sup_{2^{\nu j}<\lambda2^{cj}\le 2^{(c+2\varepsilon)j}}\cic{1}_{[2^{r},2^{r+(2-c)})}(\lambda 2^{cj})\Big|\sum_{n \neq 0}f(x-n) e(\lambda |n|^c) \psi_j^{\tau}(n) \Big|
\\	\le\sum_{\substack{r\in(2-c)\mathbb{Z}\text{,}\\ \nu j-2\le r\le (c+2\varepsilon)j+2}}\sup_{2^r\le \lambda 2^{cj}<2^{r+(2-c)}}\Big|\sum_{n \neq 0}f(x-n) e(\lambda |n|^c) \psi_j^{\tau}(n) \Big|\text{,}
	\end{multline}
	where for the last equality we simply note that for any $r\in(2-c)\mathbb{Z}\footnote{Here we use the standard notation $\lambda\mathbb{Z}\coloneqq \{\lambda k:\,k\in\mathbb{Z}\}$. Letting $r\in\mathbb{Z}$ instead of $r\in(2-c)\mathbb{Z}$ results in an exponential loss in the constants and since we try to optimize the constant the argument yields with respect to $c$, a finer control of the scale $2^{c j}\lambda$ is needed.}$ such that $r<\nu j-2$ or $r>(c+2\varepsilon)j+2$ we have that
	\[
	\big[2^r,2^{r+(2-c)}\big)\cap\big(2^{\nu j},2^{(c+2\varepsilon)j}\big]=\emptyset\text{.}
	\]
	Note that the outer sum has $O((2-c)^{-1}j)$ terms and thus to establish \eqref{est51goal} it suffices to prove that for any $r\in[\nu j-2, (c+2\varepsilon)j+2]$ we have that
\begin{equation}\label{estimate51}
	\bigg\|\sup_{2^r\le \lambda 2^{cj}<2^{r+1}}\Big|\sum_{n \neq 0}f(x-n) e(\lambda |n|^c) \psi_j^{\tau}(n) \Big|\bigg\|_{\ell^2_{\d x}}\lesssim  \big((2-c)(c-1)\big)^{-\frac{1}{4}}2^{-\frac{\varepsilon}{128}j}\|f\|_{\ell^2}\text{,}
\end{equation}
 We linearize the supremum; we will prove that for any measurable $\lambda_x\colon\mathbb{Z}\to [2^{r-cj},2^{r+(2-c)-cj})$ we have
	\begin{equation}\label{goalreturn}
		\Big\|\sum_{n \neq 0}f(x-n) e(\lambda_x |n|^c) \psi_j^{\tau}(n) \Big\|_{\ell^2_{\d x}}\lesssim \big((2-c)(c-1)\big)^{-\frac{1}{4}}2^{-\frac{\varepsilon}{128}j}\|f\|_{\ell^2}\text{.}
	\end{equation}
We fix such a function and use the $TT^*$ method here. It is not difficult to see that if
\[
T_{\tau,j}f(x)=	\sum_{n \neq 0}f(x-n) e(\lambda_x |n|^c) \psi_j^{\tau}(n)\text{,}\quad \text{then}\quad T^*_{\tau,j}g(y)=\sum_{x\in\mathbb{Z}}g(x)e(-\lambda_x|x-y|^c)\psi_j^{\tau}(x-y)\text{,}
	\]
	and also that
	\[
	T_{\tau,j}T^*_{\mathsf{\tau},j}f(x)=\sum_{y\in\mathbb{Z}}f(y)K_{\tau,j}(x,y)\text{,}
	\]
	where
	\[
	K_{\tau,j}(x,y)=\sum_{m\in\mathbb{Z}}e(\lambda_x|x-m|^c-\lambda_y |y-m|^c)\psi_j^{\tau}(x-m)\psi_j^{\tau}(y-m)\text{.}
	\]
	Let us remark that $K_{\tau,j}$ depends on $r$ as well, since $\lambda$ depends on $r$, we chose to suppress that dependence. Since $\|T_{\tau,j}\|_{\ell^2\to\ell^2}= \|T_{\tau,j}T^*_{\tau,j}\|_{\ell^2\to\ell^2}^{1/2}$, we focus on the operator $
	T_{\tau,j}T^*_{\tau,j}$, and to derive the desired bounds it suffices to establish the following proposition.
	\begin{proposition}\label{keyptwiseest}
Fix $\rho\coloneqq\frac{\varepsilon}{64}=\frac{2-c}{256}$ and $\sigma\coloneqq  \frac{\varepsilon}{16}=\frac{2-c}{64}$. There exists an absolute positive constant $C$ such that the following estimate holds
\begin{equation}\label{TTstarest}
|K_{\tau,j}(x,y)|\lesssim\big((2-c)(c-1)\big)^{-\frac{1}{2}} 2^{-(1+\rho)j}\cic{1}_{2^{(1-\sigma)j}\le |x-y|\le 2^j}+2^{-j}\cic{1}_{|x-y|\le 2^{(1-\sigma)j}}\text{.}
\end{equation}
\end{proposition}
\begin{proof}We begin with certain straightforward reductions. Firstly, we assume that $y\ge x$ and $\tau=-$ since symmetric arguments yield the result for the remaining cases. Secondly, we may assume that $|x-y|\geq 2^{(1-\sigma)j}$ since otherwise we clearly have
		\[
		|K_{-,j}(x,y)|\lesssim \sum_{m:\,2^{j-3}\le|x-m|\le 2^{j-1} }2^{-2j}\le2^{-j}\text{.}
		\]
		Now for the case $y\ge x$ and $|x-y| \geq 2^{(1-\sigma)j}$ we note that
		\begin{multline}\label{expsum2}
			K_{-,j}(x,y)
			\\			=\bigg(\sum_{ \substack{x \leq  m \leq y \\  2^{j-3}\le|m-y|\le2^{j-1} \\    2^{j-3}\le|m-x|\le2^{j-1}}}+ \sum_{\substack{m>y \\ 2^{j-3}\le|m-y|\le2^{j-1} \\    2^{j-3}\le|m-x|\le2^{j-1}}}+\sum_{\substack{m< x \\ 2^{j-3}\le|m-y|\le2^{j-1} \\    2^{j-3}\le|m-x|\le2^{j-1}}} \bigg) e(\lambda_x|x-m|^c-\lambda_y |y-m|^c) \psi^{-}_j(x-m) \psi^{-}_j(y-m)
			\\
			=\sum_{\substack{m>y \\ 2^{j-3}\le|m-y|\le2^{j-1} \\    2^{j-3}\le|m-x|\le2^{j-1}}}e(\lambda_x|x-m|^c-\lambda_y |y-m|^c) \psi^{-}_j(x-m) \psi^{-}_j(y-m)\text{,}
		\end{multline}
		since for the first sum we have that $x-m$ and $y-m$ have different signs and for the third one we note that $x-m>0$ and thus $\psi_j^{-}(x-m)=0$. We focus on bounding the last line of $\eqref{expsum2}$. For any fixed $x,y$ the phase of the exponential sum becomes 
\[
f(t)=\lambda_x(t-x)^c-\lambda_y(t-y)^c\text{,}\quad t\in[a_{x,y},b_{x,y}]\text{,}
\]
where $a_{x,y},b_{x,y}$ are defined by $\{m\in\mathbb{Z}:\,m>y,\,2^{j-3}\le|m-y|\le 2^{j-1},\,2^{j-3}\le|m-x|\le 2^{j-1}\}\eqqcolon\{a_{x,y},\dotsc,b_{x,y}\}$. We see that 
		\[
		f''(t)=c(c-1)\big(\lambda_x(t-x)^{c-2}-\lambda_y(t-y)^{c-2}\big)=c(c-1)\Big(\big(\lambda_x^{\frac{1}{c-2}}(t-x)\big)^{c-2}-\big(\lambda_y^{\frac{1}{c-2}}(t-y)\big)^{c-2}\Big)\text{.}
		\]
		By the Mean Value Theorem we obtain that there exists $\xi=\xi(x,y,t)$ between
		$\lambda_x^{\frac{1}{c-2} }(t-x)$ and $ \lambda_y^{\frac{1}{c-2} }(t-y)$ such that
\begin{multline}
		|f''(t)|=|c(c-1)(c-2)|\xi^{c-3}\Big|\lambda_x^{\frac{1}{c-2} }(t-x)-\lambda_y^{\frac{1}{c-2} }(t-y)\Big|
		\\
		=|c(c-1)(c-2)|\lambda_x^{\frac{1}{c-2} }\xi^{c-3}\bigg|(t-x)-\frac{\lambda_y^{\frac{1}{c-2} }}{\lambda_x^{\frac{1}{c-2} }}(t-y)\bigg|\text{,}
	\end{multline}
		and now define $\varepsilon(x,y)\coloneqq \big(\frac{\lambda_y}{\lambda_x}\big)^{\frac{1}{c-2}}-1$. Without loss of generality let us assume that $\lambda_y\le \lambda_x$ and thus $\varepsilon(x,y)\ge 0$. For the case $\lambda_y\ge \lambda_x$, note that  
		\[
		|f''(t)|=|c(c-1)(c-2)|\lambda_y^{\frac{1}{c-2} }\xi^{c-3}\bigg|(t-y)-\frac{\lambda_x^{\frac{1}{c-2} }}{\lambda_y^{\frac{1}{c-2} }}(t-x)\bigg|\text{,}
		\]
		and one could define $\tilde{\varepsilon}(x,y)\coloneqq \big(\frac{\lambda_x}{\lambda_y}\big)^{\frac{1}{c-2}}-1\ge 0$ and proceed in an identical manner. Thus from now one we assume $\lambda_y\le \lambda_x$ and we may write
\begin{multline}
|f''(t)|=|c(c-1)(c-2)|\lambda_x^{\frac{1}{c-2} }\xi^{c-3}\big|(t-x)-\big(1+\varepsilon(x,y)\big)(t-y)\big|
\\
=|c(c-1)(c-2)|\lambda_x^{\frac{1}{c-2} }\xi^{c-3}\big|(y-x)-\varepsilon(x,y)(t-y)\big|\text{.}
\end{multline}
Now we distinguish two cases. Firstly, we assume that $\varepsilon(x,y)\le2^{-\sigma j-100}$. Note that then
\[
\lambda_x^{\frac{1}{c-2}}\big(\lambda_x^{\frac{1}{c-2}}(t-x)\big)^{c-3}\simeq \lambda_x^{\frac{1}{c-2}} \lambda_x^{\frac{c-3}{c-2}}2^{j(c-3)}\simeq \big(2^{cj}\lambda_x\big)2^{-3j} \simeq 2^{r-3j}
		\]
		and the same reasoning together with the fact that $\lambda_y^{\frac{1}{c-2}}\simeq \lambda_x^{\frac{1}{c-2}}$ yield	
\[		\lambda_x^{\frac{1}{c-2}}\big(\lambda_y^{\frac{1}{c-2}}(t-x)\big)^{c-3}\simeq \lambda_y^{\frac{1}{c-2}}\big(\lambda_y^{\frac{1}{c-2}}(t-x)\big)^{c-3}\simeq 2^{r-3j}\text{.}
\]
Thus $\lambda_x^{\frac{1}{c-2}}\xi^{c-3}\simeq 2^{r-3j}$, and this yields
\[
2^{r-3j}\big(2^{(1-\sigma)j}-2^{-\sigma j-100}2^{j-1}\big)\le 2^{r-3j}\big(|y-x|-2^{-\sigma j-100}|t-y|\big)\lesssim\frac{|f''(t)|}{(c-1)(2-c)}\lesssim 2^{r-3j}2^j=2^{r-2j}\text{,}
		\] 
		thus $2^{r-2j-\sigma j}\lesssim\frac{|f''(t)|}{(c-1)(2-c)}\lesssim 2^{r-2j}$. By van der Corput, see \cite{IK} page 208, for any interval $J\subseteq \{a_{x,y},\dotsc,b_{x,y}\}$ we get 
		\begin{multline}
		\bigg|\sum_{m\in J}e\left( \lambda_x   |x-m|^c -\lambda_y   |y-m|^c \right)\bigg|\lesssim 2^{\sigma j}\big((c-1)(2-c)\big)^{\frac{1}{2}}2^{\frac{r}{2}-j-\frac{\sigma}{2}j}2^j+\big((c-1)(2-c)\big)^{-\frac{1}{2}}2^{-\frac{r}{2}+j+\frac{\sigma}{2}j}
\\
\lesssim2^{\frac{\sigma}{2}j}\Big(2^{\frac{r}{2}}+\big((c-1)(2-c)\big)^{-\frac{1}{2}}2^{j-\frac{r}{2}}\Big)\text{.}
\end{multline}
By Lemma~5.5 from \cite{Demeter} we conclude
\[
|K_{-,j}(x,y)|\lesssim 2^{-2j+\frac{\sigma}{2}j+\frac{r}{2}}+\big((c-1)(2-c)\big)^{-1/2}2^{-j+\frac{\sigma}{2}j-\frac{r}{2}}\lesssim\big((c-1)(2-c)\big)^{-1/2}2^{-j}\big(2^{-j+\frac{\sigma}{2}j+\frac{r}{2}}+2^{\frac{\sigma}{2}j-\frac{r}{2}}\big)\text{,}
\]
and note that since $r\in\big[\nu j-2,(c+2\varepsilon)j+2\big]$, we get
\[
-j+\frac{\sigma}{2}j+\frac{r}{2}\le -j+\frac{\sigma}{2}j+\frac{1}{2}(c+2\varepsilon)j+1=\frac{c-2+\sigma+2\varepsilon}{2}j+1
\]
and
\[
\frac{\sigma}{2}j-\frac{r}{2}\le \frac{\sigma}{2}j-\frac{\nu }{2}j+1\text{.}
\]
Remembering that $\sigma= \frac{\varepsilon}{16}$, $\nu= \frac{\varepsilon}{8}$ and $\varepsilon=\frac{2-c}{4}$, it is easy to check that the previous estimate yields that
\begin{equation}\label{firstestwithnumerology}
|K_{-,j}(x,y)|\lesssim\big((c-1)(2-c)\big)^{-1/2} 2^{-(1+\rho)j}\text{,}\quad\text{for $\rho=\frac{\varepsilon}{64}$\text{.}}
\end{equation}

It remains to treat the case when $\varepsilon(x,y)\ge2^{-\sigma j-100}$. Fix $\nu'\coloneqq\frac{ 3\sigma}{2}=\frac{3\varepsilon}{32}$ and define
\[
I_{x,y}\coloneqq \{m\in\{a_{x,y}\dotsc,b_{x,y}\}:\,|(y-x)-\varepsilon(x,y)(m-y)|\le 2^{j-\nu'j}\}\text{, and thus }|I_{x,y}|\lesssim 2^{(1+\sigma-\nu')j}\text{.} 
\]
Let $T_{x,y}\coloneqq \{a_{x,y},\dotsc,b_{x,y}\}$ and split it to $T_{x,y}=I_{x,y}\cup(T_{x,y}\setminus I_{x,y})$. Note that $I_{x,y}$ is an interval and thus $T_{x,y}\setminus I_{x,y}$ is a union of at most two intervals i.e. there exists two intervals $I^1_{x,y}$ and $I^2_{x,y}$ such that $T_{x,y}\setminus I_{x,y}=I^1_{x,y}\cup I^2_{x,y}$. We split our exponential sum to $I_{x,y}\cup I^1_{x,y}\cup I^2_{x,y}$ and we bound these seperately.

For $I_{x,y}$ note that
\begin{equation}
\Big|\sum_{m\in I_{x,y}}e\left( \lambda_x   |x-m|^c -\lambda_y  |y-m|^c  \right) \psi^-_j(x-m) \psi_j^-(y-m)\Big|\lesssim 2^{-2j}|I_{x,y}|\lesssim 2^{-(1+\nu'-\sigma)j}\lesssim2^{-(1+\frac{\varepsilon}{64})j}\text{,}
\end{equation}
since $\nu'-\sigma=\frac{\sigma}{2}=\frac{\varepsilon}{32}$.

We now treat $I^1_{x,y}$, and $I^2_{x,y}$ can be handled likewise. Let $c_{x,y}$ and $d_{x,y}$ be the left and right endpoints of $I^1_{x,y}$, and note that for any $t\in[c_{x,y},d_{x,y}]$ we have 
\[
2^{r-3j}2^{j-\nu'j}\lesssim\frac{ |f''(t)|}{(c-1)(2-c)}\lesssim 2^{r-3j}2^j\text{,}
\] 
where for the first estimate we used the fact that if $t\in[c_{x,y},d_{x,y}]$, then $|(y-x)-\varepsilon(x,y)(t-y)|>2^{j-\nu'j}$, and for the second estimate we used the fact that $\varepsilon(x,y)\lesssim 1$, since 
\[
\varepsilon(x,y)\coloneqq \Big(\frac{\lambda_y}{\lambda_x}\Big)^{\frac{1}{c-2}}-1\le \Big(\frac{\lambda_x}{\lambda_y}\Big)^{\frac{1}{2-c}}\le\bigg(\frac{2^{r+(2-c)-cj}}{2^{r-cj}}\bigg)^{\frac{1}{2-c}}\le 2\text{.}\footnote{Notice that had we opted for the natural choice $r\in\mathbb{Z}$ for our spliting in \eqref{refinewithr} to control the scale of $2^{cj}\lambda$, here we would have $\varepsilon(x,y)\lesssim 2^{\frac{1}{2-c}}$ resulting in exponential blowup of the final estimate as $c\to 2^{-}$.}
\]
We use van der Corput to conclude that for any subinterval $(I^1_{x,y})'$ of $I^1_{x,y}$ we get
\begin{multline}
\Big|\sum_{m\in (I^1_{x,y})'}e\left( \lambda_x  |x-m|^c -\lambda_y  |y-m|^c  \right) \Big|
\\
\lesssim \big((2-c)(c-1)\big)^{\frac{1}{2}}2^{\nu' j}2^{\frac{r}{2}-j-\frac{\nu'}{2}j}2^j+\big((2-c)(c-1)\big)^{-\frac{1}{2}}2^{-\frac{r}{2}+j+\frac{\nu'}{2}j}
\lesssim 
2^{\frac{\nu'}{2}j+\frac{r}{2}}+\big((2-c)(c-1)\big)^{-\frac{1}{2}}2^{-\frac{r}{2}+j+\frac{\nu'}{2}j}\text{,}
\end{multline}
and thus similarly to our previous considerations
\begin{multline}
\Big|\sum_{m\in I^1_{x,y}}e\left( \lambda_x  |x-m|^c -\lambda_y  |y-m|^c  \right)\psi^-_j(x-m)\psi_j^-(y-m) \Big|
\\
\lesssim 2^{\frac{\nu'}{2}j+\frac{r}{2}-2j}+\big((2-c)(c-1)\big)^{-\frac{1}{2}}2^{-\frac{r}{2}-j+\frac{\nu'}{2}j}\lesssim\big((2-c)(c-1)\big)^{-\frac{1}{2}}2^{-j}\Big(2^{\frac{\nu'}{2}j+\frac{r}{2}-j}+2^{-\frac{r}{2}+\frac{\nu'}{2}j}\Big)
\\
\lesssim\big((2-c)(c-1)\big)^{-\frac{1}{2}}2^{-j(1+\frac{\varepsilon}{64})}
\text{,}
\end{multline}
where for the last estimate we took into account that $r\in\big[\nu j-2,(c+2\varepsilon)j+2\big]$ as well as the fact that $\nu'=\frac{3\varepsilon}{32}$, $\nu=\frac{\varepsilon}{8}$, $\varepsilon\coloneqq\frac{2-c}{4}$ and thus
\[
\frac{\nu'}{2}j+\frac{r}{2}-j<\frac{\nu'}{2}j-j+\frac{c+2\varepsilon}{2}j+1=\frac{\nu'+c-2+2\varepsilon}{2}j+1=\frac{\frac{3\varepsilon}{32}-4\varepsilon+2\varepsilon}{2}j+1\le -\frac{\varepsilon}{64}j+1
\]
and
\[
-\frac{r}{2}+\frac{\nu'}{2}j\le \frac{\nu'}{2}j-\frac{\nu}{2} j+1=\frac{\frac{3\varepsilon}{32}-\frac{4\varepsilon}{32}}{2} j+1 \le -\frac{\varepsilon}{64}j+1\text{.}
\]
The estimate is complete and the same reasoning applies for $I^2_{x,y}$.
\end{proof}
	Using the estimate of Proposition~$\ref{keyptwiseest}$ we can immediately establish the estimate \eqref{goalreturn} and conclude the proof of Proposition~$\ref{awayfrom0}$ since \eqref{TTstarest} implies that
	\begin{equation}\label{kerneltobound}
	|T_{\tau,j}T_{\tau,j}^*f(x)|\lesssim \big((2-c)(c-1)\big)^{-\frac{1}{2}}(2^{-\rho j}+2^{-\sigma j})Mf(x)\lesssim\big((2-c)(c-1)\big)^{-\frac{1}{2}}2^{-\rho j}Mf(x)\text{,}
	\end{equation}
	and thus we get $\|T_{\tau,j}\|_{\ell^2\to\ell^2}\lesssim \big((2-c)(c-1)\big)^{-\frac{1}{4}}2^{-\frac{\rho}{2}j}$, establishing \eqref{estimate51} and concluding the proof.
\end{proof}

\section{Concluding the Proof of Theorem~$\ref{maintheorem}$}
This section is devoted to establishing the estimate \eqref{contreduction} which we do in the following proposition, concluding the proof of Theorem~$\ref{maintheorem}$. \begin{proposition}\label{ContRedExpl}Let $\mathsf{i}\in\{0,1\}$, $p \in (1,\infty)$. There exists a positive constant $C=C(p,c)$ such that for all $f\in L^p(\mathbb{R})$ we have
	\begin{equation}\label{contreduction}
		\bigg\|\sup_{\lambda\in [-1/2,1/2)}\Big|\mathcal{F}^{-1}_{\mathbb{R}}\Big[\sum_{j\in\mathbb{N}}\mathcal{F}_{\mathbb{R}}[f](\cdot)H_j^{\mathsf{i}}(\cdot,\lambda)\cic{1}_{X_j}(\lambda)\Big]\Big|\bigg\|_{L^p(\mathbb{R})}\le C\|f\|_{L^p(\mathbb{R})}\text{.}
	\end{equation}
\end{proposition}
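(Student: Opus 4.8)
The idea is to identify $\sum_{j}\mathcal F_{\mathbb R}[f](\cdot)H^{\mathsf{i}}_j(\cdot,\lambda)\cic{1}_{X_j}(\lambda)$ as a smoothed, mildly truncated version of the Fourier multiplier of Guo's continuous operator, and then peel off every harmless piece. First I would dispose of the cutoff $\eta(\xi,\lambda)=\eta_1(\xi)\eta_1(\lambda)$: the scalar factor $\eta_1(\lambda)\in[0,1]$ is harmless, while $\eta_1(\xi)\mathcal F_{\mathbb R}[f](\xi)=\mathcal F_{\mathbb R}[\eta_1^\vee *f](\xi)$, so after replacing $f$ by $\eta_1^\vee *f$ (a bounded operation on every $L^p(\mathbb R)$) one may assume $\eta\equiv 1$. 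Using the telescoping identity $\sum_{1\le j\le N}\psi_j(t)=\tfrac{\varphi(2^{-N}t)-\varphi(t)}{t}$ and the fact that $\lambda\in X_j$ precisely for $1\le j\le N(\lambda):=\big\lfloor\tfrac{\log_2(1/|\lambda|)}{c-\nu}\big\rfloor$ (the sum being empty for $|\lambda|>2^{-c+\nu}$), the left-hand side of \eqref{contreduction} becomes, up to the above reduction, $\|\mathcal Gf\|_{L^p(\mathbb R)}$, where
\[
\mathcal Gf(x):=\sup_{\lambda\in[-1/2,1/2)}\Big|\int_{\mathbb R}f(x-t)\,\frac{\varphi(t/R_\lambda)-\varphi(t)}{t}\,e\big(\lambda[t]_{\mathsf{i}}^c\big)\,\d t\Big|,\qquad R_\lambda:=2^{N(\lambda)}\simeq|\lambda|^{-1/(c-\nu)}.
\]

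Next I would split $\tfrac{\varphi(t/R_\lambda)-\varphi(t)}{t}=\tfrac1t-\tfrac{\varphi(t)}{t}-\tfrac{1-\varphi(t/R_\lambda)}{t}$ and handle the three kernels in turn. The term $\tfrac1t e(\lambda[t]_{\mathsf{i}}^c)$ produces, after the supremum over $\lambda$, exactly Guo's operator $\mathcal C^{\mathsf{even}}_c$ (if $\mathsf{i}=0$) or $\mathcal C^{\mathsf{odd}}_c$ (if $\mathsf{i}=1$), which is bounded on $L^p(\mathbb R)$ by Theorem~\ref{Guo}. The term $\tfrac{\varphi(t)}{t}e(\lambda[t]_{\mathsf{i}}^c)$ is supported in $|t|\le 1/2$; writing $e(\lambda[t]_{\mathsf{i}}^c)=1+\big(e(\lambda[t]_{\mathsf{i}}^c)-1\big)$ one gets a fixed ($\lambda$-independent) truncated Hilbert transform plus a kernel whose supremum over $\lambda$ is pointwise $\lesssim|t|^{c-1}\cic{1}_{|t|\le1/2}\in L^1(\mathbb R)$, so that the associated maximal operator is $\lesssim Mf$; both are bounded on $L^p(\mathbb R)$.

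It remains to bound $\sup_{\lambda}\big|\int_{\mathbb R}f(x-t)\tfrac{1-\varphi(t/R_\lambda)}{t}e(\lambda[t]_{\mathsf{i}}^c)\,\d t\big|$, the ``tail'', whose kernel is supported in $|t|\gtrsim R_\lambda$. Splitting once more $e(\lambda[t]_{\mathsf{i}}^c)=1+\big(e(\lambda[t]_{\mathsf{i}}^c)-1\big)$, the ``$1$'' part is dominated by the maximal smoothly truncated Hilbert transform $\sup_{R>0}\big|\int f(x-t)\tfrac{1-\varphi(t/R)}{t}\,\d t\big|$, which is classically bounded on $L^p(\mathbb R)$; for the remaining part one adds and subtracts $\tfrac{1-\varphi(t/R_\lambda)}{t}$ and the full kernel $\tfrac{e(\lambda[t]_{\mathsf{i}}^c)}{t}$ to see that it is controlled by the maximal smoothly truncated Hilbert transform, by Guo's operator, and by its maximally truncated companion $\sup_{\lambda,R}\big|\int_{|t|\le R}f(x-t)\tfrac{e(\lambda[t]_{\mathsf{i}}^c)}{t}\,\d t\big|$. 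Invoking Theorem~\ref{Guo} (in particular for this last, maximally truncated operator) together with the classical $L^p$--bounds for $M$ and for the truncated Hilbert transforms, one obtains $\|\mathcal Gf\|_{L^p(\mathbb R)}\lesssim\|f\|_{L^p(\mathbb R)}$, which is \eqref{contreduction} and completes the proof of Theorem~\ref{maintheorem}.

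The step I expect to require the most care is the tail: its supremum over $\lambda$ cannot be absorbed into a Hardy--Littlewood maximal function, since for $|t|\gtrsim R_\lambda$ the factor $e(\lambda[t]_{\mathsf{i}}^c)-1$ is of size $\simeq 1$, so genuine oscillation must be exploited. The clean route is via a maximally truncated version of Guo's theorem, as indicated above; alternatively one can argue directly, using that $R_\lambda\simeq|\lambda|^{-1/(c-\nu)}$ lies a power $|\lambda|^{-\nu/(c(c-\nu))}$ beyond the critical scale $|\lambda|^{-1/c}$, so that van der Corput's estimate gives $\big\|\int_{|t|\gtrsim R_\lambda}f(\cdot-t)\tfrac{e(\lambda[t]_{\mathsf{i}}^c)}{t}\,\d t\big\|_{L^2(\mathbb R)}\lesssim|\lambda|^{\nu/(2(c-\nu))}\|f\|_{L^2(\mathbb R)}$ for each fixed $\lambda$, together with trivial $L^{p_0}$--estimates, and the supremum in $\lambda$ is then recovered by a $TT^*$ argument parallel to --- but considerably lighter than --- the one in Section~5, the continuous setting rendering van der Corput sharp so that the numerology (governed by $\nu<\varepsilon/10$) closes comfortably.
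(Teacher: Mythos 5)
Your plan is essentially the paper's proof in different clothing: after telescoping the $\psi_j$'s, your three kernels $\tfrac1t$, $\tfrac{\varphi(t)}{t}$ and $\tfrac{1-\varphi(t/R_\lambda)}{t}$ correspond exactly to the paper's decomposition $\sum_{j\in\Z}H_j^{\mathsf i}-\sum_{j\le0}H_j^{\mathsf i}-\sum_{j\in\N}H_j^{\mathsf i}\cic{1}_{X_j^c}$, and the tools you invoke for the first two (Theorem~\ref{Guo}; Hardy--Littlewood plus the truncated/maximally truncated Hilbert transform) are the ones used there for $S_1$ and $S_3$. The one caution concerns your ``clean route'' for the tail: a maximally truncated companion $\sup_{\lambda,R}\big|\int_{|t|\le R}f(x-t)e(\lambda[t]_{\mathsf i}^c)\,\tfrac{\d t}{t}\big|$ is \emph{not} part of Theorem~\ref{Guo} as stated, and since $e(\lambda[t]_{\mathsf i}^c)/t$ is not a Calder\'on--Zygmund kernel uniformly in $\lambda$, maximal truncations do not come for free from a Cotlar-type argument; relying on it as a black box would leave a gap. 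Your fallback, however, is exactly what the paper does (Lemma~\ref{largeelllp}): decompose the region $|t|\gtrsim R_\lambda$ into dyadic blocks indexed by $\ell$ with $|\lambda|^{1/c}2^{j}\simeq2^{\ell}$, note that $\lambda\in X_j^{c}$ forces $\ell\gtrsim\nu j/c$ so each block sits beyond the critical scale, get the single-$\lambda$ gain $2^{-\beta\ell}$ from van der Corput, and recover the supremum in $\lambda$ (and in the scale) by a $TT^*$ kernel estimate in the continuous setting, then interpolate with trivial $L^{p_0}$ bounds and sum the geometric series in $\ell$. With that route made explicit, the argument closes.
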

The objective in this section is to reduce the proposition above to Theorem~1.1 from \cite{GuoOscillatory} which we state here for the sake of clarity.
\begin{theorem}\label{Guo}For every $\varepsilon\in\mathbb{R}$ and $
\mathsf{i}\in\{0,1\}$, let  
\begin{equation}\label{GuoOperator}
\mathcal{C}^{\mathsf{i}}_{\varepsilon}f(x)=\sup_{\lambda \in \mathbb{R}} \bigg|\int_{\mathbb{R}}f(x-t) \frac{e(\lambda [t]_{\mathsf{i}}^{\varepsilon})}{t}\d t\bigg|=\sup_{\lambda \in \mathbb{R}} \Big| \mathcal{F}_{\mathbb{R}}^{-1} \Big[ \mathcal{F}_{\mathbb{R}}[f](\xi) \; \mathsf{p.v.} \int_{\mathbb{R}} \frac{e(\lambda [t]_{\mathsf{i}}^{\varepsilon}-\xi t)}{t}\d t \Big](x)\Big|  \text{.}
\end{equation}
Then for every $\varepsilon\neq 1-\mathsf{i}$ and $p \in (1,\infty)$ there exists a  constant $C_{p,\varepsilon,\mathsf{i}}$ such that $\|\mathcal{C}^{\mathsf{i}}_{\varepsilon}f\|_{L^p(\mathbb{R})}\le C_{p,\varepsilon,\mathsf{i}} \|f\|_{L^p(\mathbb{R})}$.
\end{theorem}

\begin{proof}The proof can be found \cite{GuoOscillatory}.
\end{proof}
We briefly mention that for $\varepsilon\in(1,2)$ we have $C_{2,\varepsilon,\mathsf{i}}\lesssim (c-1)^{-\frac{1}{2}}$, where the implied constant is absolute. This is shown in Lemma~$\ref{GuoConstant}$.

We are ready to give the proof of Proposition~$\ref{ContRedExpl}$.
\begin{proof}[Proof of Proposition~$\ref{ContRedExpl}$] We decompose as follows  
	\[
	\sum_{j \in \N} H_j^{\mathsf{i}}(\xi,\lambda) \cic{1}_{X_j}(\lambda)=\sum_{j \in \Z}H_j^{\mathsf{i}}(\xi,\lambda)- \sum_{j \leq 0} H_j^{\mathsf{i}}(\xi,\lambda) -\sum_{j \in \N} H_j^{\mathsf{i}}(\xi,\lambda)  \cic{1}_{X_j^c}
	\]
	and
	\[ \sum_{j \in \N} H_j^{\mathsf{i}}(\xi ,\lambda) \cic{1}_{X_j^c}(\lambda) =\sum_{ \ell \in \Z} \sum_{j \in \N} \cic{1}_{[1,2)}(\lambda^{\frac{1}{c}  }2^{j-\ell})H_j^{\mathsf{i}}(\xi,\lambda) \cic{1}_{X_j^{c}}(\lambda)= \sum_{\ell  \geq \frac{\nu}{c}-1}H^{\mathsf{i}}_{j(\lambda,\ell)}(\xi,\lambda)\cic{1}_{\mathbb{N}}(j(\lambda,\ell)) \cic{1}_{X_{j(\lambda,\ell)}^c}(\lambda)\text{,}
	\]
	where $j(\lambda,\ell)$ is the unique integer such that $1 \leq \lambda^{\frac{1}{c}}2^{j(\lambda,\ell)-\ell}<2$. Therefore we may write
	\begin{multline}\label{decompositionH}
		\bigg\|\sup_{\lambda \in [-1/2,1/2)} \Big| \mathcal{F}^{-1}_{\mathbb{R}} \Big[\sum_{j \in \N} \mathcal{F}_{\mathbb{R}}[f](\cdot) H_j^{\mathsf{i}}(\cdot,\lambda) \cic{1}_{X_j}(\lambda) \Big] \Big|\bigg\|_{L^p(\mathbb{R})}
		\\
		\le \bigg\| \sup_{ \lambda \in [-1/2,1/2)} \Big|\mathcal{F}^{-1}_{\mathbb{R}} \Big[\mathcal{F}_{\mathbb{R}}[f](\cdot) \sum_{j \in \Z}H_j^{\mathsf{i}}(\cdot,\lambda) \Big] \Big|\bigg\|_{L^p(\mathbb{R})}
		\\
		+\bigg\|\sup_{ \lambda \in [-1/2,1/2)} \Big|\mathcal{F}^{-1}_{\mathbb{R}}\Big[\sum_{\ell\ge\frac{\nu}{c}-1}\mathcal{F}_{\mathbb{R}}[f](\cdot) \cic{1}_{[1,2)}(\lambda^{\frac{1}{c}  }2^{j-\ell})  H^{\mathsf{i}}_{j(\lambda,\ell)}(\cdot,\lambda)) \cic{1}_{X_{j(\lambda,\ell)}^{c}}(\lambda)\Big]\Big|\bigg\|_{L^p(\mathbb{R})}
		\\
		+\bigg\|\sup_{\lambda \in [-1/2,1/2)}\Big|\mathcal{F}_{\mathbb{R}}^{-1}\Big[\sum_{j \leq 0} \mathcal{F}_{\mathbb{R}}[f](\cdot) H_j^{\mathsf{i}}(\cdot,\lambda) \Big]\Big|\bigg\|_{L^p(\mathbb{R})}\eqqcolon S_1+S_2+S_3\text{,}
	\end{multline}
	and the task is reduced to bounding these three summands. The first one will be bounded using Theorem~$\ref{Guo}$, the second one using a simple variant of the proof of Proposition~$\ref{awayfrom0}$ and the third one will be treated via pointwise bounds by the Hardy-Littlewood maximal function and the maximal truncations of the Hilbert transform.\\
	\textbf{Estimates for $S_1$.} Note that
	\[
	\begin{split}
		&\sum_{j \in \mathbb{Z}}H^{\mathsf{i}}_j(\xi,\lambda) \mathcal{F}_{\mathbb{R}}[f](\xi)= \eta_1(\lambda) \big(\eta_1(\xi)\mathcal{F}_{\mathbb{R}}[f](\xi)\big)\sum_{j \in \mathbb{Z}} \int_{\mathbb{R}} e(\lambda [t]_{\mathsf{i}}^c-\xi t) \psi_j(t) \d t
		\\
		&=\eta_1(\lambda) \mathcal{F}_{\mathbb{R}}\big[f* \mathcal{F}_{\mathbb{R}}^{-1}[\eta_1]\big](\xi) \bigg(\mathsf{p.v.} \int_{\mathbb{R}} e(\lambda [t]_{\mathsf{i}}^c-\xi t) \frac{\d t}{t}\bigg)\text{,}
	\end{split}
	\]
	and thus for every $\lambda\in[-1/2,1/2)$ and $x\in\mathbb{R}$, since $\|\eta_1\|_{L^{\infty}}\le 1$, we get
	\begin{multline}\label{Cevend}
		\Big|\mathcal{F}^{-1}_{\mathbb{R}} \Big[\mathcal{F}_{\mathbb{R}}[f](\cdot) \sum_{j \in \Z}H^{\mathsf{i}}_j(\cdot,\lambda) \Big](x)\Big|\le \bigg|\mathcal{F}^{-1}_{\mathbb{R}}\bigg[\mathcal{F}_{\mathbb{R}}\big[f* \mathcal{F}_{\mathbb{R}}^{-1}[\eta_1]\big](\xi) \bigg(\mathsf{p.v.} \int_{\mathbb{R}} e(\lambda [t]_{\mathsf{i}}^c-\xi t) \frac{\d t}{t}\bigg)\bigg](x)\bigg|
		\\
		\le
		\mathcal{C}_{c}^{\mathsf{i}}[f* \mathcal{F}_{\mathbb{R}}^{-1}[\eta_1]](x)\text{,}
	\end{multline}
	and finally we obtain 
	\begin{equation}\label{ptwiseboundconv}
		S_1\le \big\|\mathcal{C}_{c}^{\mathsf{i}}\big[f* \mathcal{F}_{\mathbb{R}}^{-1}[\eta_1]\big]\big\|_{L^p(\mathbb{R})}\lesssim_{c,p} \|f* \mathcal{F}_{\mathbb{R}}^{-1}[\eta_1]\|_{L^p(\mathbb{R})}\lesssim_{\eta_1}\|f\|_{L^p(\mathbb{R})}\text{,}
	\end{equation}
	where we have used Theorem~$\ref{Guo}$. The estimate for $S_1$ is complete, and we note that for the case $p=2$, we obtain $S_1\lesssim (c-1)^{-\frac{1}{2}}\|f\|_{L^2(\mathbb{R})}$, see our comment after Theorem~$\ref{Guo}$ and Lemma~$\ref{GuoConstant}$.
\\ \, \\
	\textbf{Estimates for $S_2$. }The estimation relies on the following lemma.
	\begin{lemma} \label{largeelllp} Let $\mathsf{i}\in\{0,1\}$. For every $p\in(1,\infty)$ there exist positive constants $C=C(c,p)$ and $\beta=\beta(c,p)$ such that for every $\ell\in\mathbb{N}$ we have
		\[
		\Big\|\sup_{j\in\mathbb{Z}} \sup_{|\lambda|^{\frac{1}{c}}2^{j}\simeq 2^{\ell}} \Big| \mathcal{F}_{\mathbb{R}}^{-1}\big[\mathcal{F}_{\mathbb{R}}(f)(\cdot) H_j^{\mathsf{i}}(\cdot,\lambda)\big]\Big|\Big\|_{L^p(\mathbb{R})}  \le C 2^{-\beta \ell}\|f\|_{L^p(\mathbb{R})}\text{.}
		\]
Moreover, for $p=2$, one may choose $C\lesssim (c-1)^{-\frac{1}{2}}$ and $\beta=\frac{1}{6}$.
	\end{lemma}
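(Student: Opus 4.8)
The plan is to unfold the multiplier into physical space, reduce to an $L^2$ estimate by interpolation, and then establish the $L^2$ estimate by combining a van der Corput bound for the symbol with a $TT^*$ argument in the spirit of Proposition~\ref{awayfrom0}. Since $H_j^{\mathsf{i}}(\xi,\lambda)=\eta_1(\xi)\eta_1(\lambda)\int_{\mathbb R}e(\lambda[t]_{\mathsf i}^c-\xi t)\psi_j(t)\,\d t$, setting $g\coloneqq f*\mathcal F^{-1}_{\mathbb R}[\eta_1]$ (so that $\|g\|_{L^p(\mathbb R)}\lesssim_{\eta_1}\|f\|_{L^p(\mathbb R)}$ and $|g|\lesssim Mf$ pointwise) and dropping the harmless factor $\eta_1(\lambda)$, one has
\[
\mathcal F^{-1}_{\mathbb R}\big[\mathcal F_{\mathbb R}(f)(\cdot)\,H_j^{\mathsf i}(\cdot,\lambda)\big](x)=\eta_1(\lambda)\int_{\mathbb R}\psi_j(t)\,e\big(\lambda[t]_{\mathsf i}^c\big)\,g(x-t)\,\d t .
\]
In particular $\sup_j\sup_\lambda\big|\mathcal F^{-1}_{\mathbb R}[\mathcal F_{\mathbb R}(f)(\cdot)H_j^{\mathsf i}(\cdot,\lambda)]\big|\lesssim Mf$, which disposes of the case $\ell\le0$ (where $2^{-\beta\ell}\gtrsim1$) and, after linearizing the two suprema by measurable selections $x\mapsto j(x)\in\mathbb Z$ and $x\mapsto\lambda(x)$ with $|\lambda(x)|^{1/c}2^{j(x)}\simeq2^\ell$, reduces the case $p\neq2$ to the case $p=2$ by Riesz--Thorin interpolation against these trivial $L^{p_0}$ bounds (uniformly in the selections), exactly as in the interpolation argument following \eqref{maxHLd}. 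Thus it suffices to exhibit $\beta_2>0$ so that, for $\ell\ge1$ and uniformly in the selections, $\big\|\int_{\mathbb R}\psi_{j(x)}(t)\,e(\lambda(x)[t]_{\mathsf i}^c)\,g(x-t)\,\d t\big\|_{L^2_{dx}}\lesssim 2^{-\beta_2\ell}\|g\|_{L^2(\mathbb R)}$.

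Two facts will be used, both consequences of van der Corput's second--derivative test applied to $t\mapsto\lambda[t]_{\mathsf i}^c-\xi t$, whose second derivative has size $\simeq|\lambda|2^{(c-2)j}\simeq2^{c\ell}2^{-2j}$ on $\supp\psi_j$: (i) $|H_j^{\mathsf i}(\xi,\lambda)|\lesssim2^{-c\ell/2}$ for every $\xi$, uniformly over $|\lambda|^{1/c}2^j\simeq2^\ell$; and (ii) off the band $|\xi|\simeq2^{c\ell-j}$ (the instantaneous--frequency range of $e(\lambda[\cdot]_{\mathsf i}^c)$ on $\supp\psi_j$) one has $|H_j^{\mathsf i}(\xi,\lambda)|\lesssim_N2^{-Nc\ell}$ for all $N$, by non--stationary phase. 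Since $|\lambda|\le\tfrac12$ forces $j\gtrsim\ell$, fact (ii) shows that distinct scales $j$ act, up to negligible tails, on distinct frequency bands of $g$. For the supremum over $\lambda$ at a fixed scale $j$ I would imitate Proposition~\ref{keyptwiseest}: the linearized operator $T_{\ell,j}h(x)=\cic{1}_{\{j(x)=j\}}(x)\int_{\mathbb R}\psi_j(t)\,e(\lambda(x)[t]_{\mathsf i}^c)\,h(x-t)\,\d t$ has $TT^*$ kernel $\cic{1}_{\{j(x)=j\}}\cic{1}_{\{j(y)=j\}}\int_{\mathbb R}\psi_j(x-s)\psi_j(y-s)\,e(\lambda(x)[x-s]_{\mathsf i}^c-\lambda(y)[y-s]_{\mathsf i}^c)\,\d s$, which one estimates as in \eqref{expsum2}--\eqref{TTstarest}, using $\lambda(x)2^{cj}\simeq\lambda(y)2^{cj}\simeq2^{c\ell}$ (so the parameter ``$r$'' there equals $c\ell$), the trivial bound near the diagonal, and the dichotomy on the size of $\varepsilon(x,y)=(\lambda(y)/\lambda(x))^{1/(c-2)}-1$; a Schur--type estimate carried out dyadic--annulus by dyadic--annulus in $|x-y|$ — each annulus bounded by the van der Corput estimate appropriate to it rather than by a single pointwise kernel bound — gives $\sup_x\int_{\mathbb R}|K(x,y)|\,\d y\lesssim2^{-2\beta_2\ell}$, hence $\|T_{\ell,j}\|_{L^2\to L^2}\lesssim2^{-\beta_2\ell}$, uniformly in $j$. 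Finally, a Littlewood--Paley decomposition of $g$ together with the pairing of each scale $j$ with the single band $|\xi|\simeq2^{c\ell-j}$ from (ii) (the other scales contributing $j$--summable negligible tails), the disjointness of the supports $\{j(\cdot)=j\}$, and the almost--orthogonality of these bands, upgrades the uniform-in-$j$ bound to the desired $L^2$ estimate.

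The step I expect to be the main obstacle is the $TT^*$ estimate: one must rework the delicate case analysis of Proposition~\ref{keyptwiseest} — the $\varepsilon(x,y)$ dichotomy and the van der Corput bounds with the right exponents — so that the Schur bound still decays in $\ell$ uniformly over the whole, a priori unbounded, range of admissible scales $j$, rather than over the two--sided window that was available there; and one must make the frequency--localization (ii) precise enough that the decoupling over $j$ is genuinely almost--orthogonal, with summable off--band tails. Everything else is routine: interpolation then transfers the $L^2$ bound to all $p\in(1,\infty)$ as above.
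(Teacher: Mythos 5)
Your proposal is correct in outline and shares the paper's core engine: reduce to $L^2$ by linearizing the suprema and interpolating against the trivial bound by the Hardy--Littlewood maximal function, then prove the $L^2$ bound by a $TT^*$ argument with van der Corput estimates organized around a dichotomy in $\varepsilon(x,y)$. Where you genuinely diverge is in how the supremum over $j$ is absorbed. The paper linearizes \emph{both} selections $x\mapsto j_x$ and $x\mapsto\lambda_x$ into a single operator and estimates the one $TT^*$ kernel \eqref{kerneldefinition} with $j_x\neq j_y$ allowed; the cross-scale case $|j_x-j_y|\ge 100$ is then disposed of directly because the first derivative of the phase is automatically of size $\gtrsim 2^{c\ell-\min\{j_x,j_y\}}$ there, and only the case $|j_x-j_y|\le 100$ requires the $\varepsilon(x,y)$ analysis. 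You instead fix the scale, run the $TT^*$ per scale, and recombine scales via the frequency localization of $H_j^{\mathsf i}$ near $|\xi|\simeq 2^{c\ell-j}$ together with Littlewood--Paley almost-orthogonality. Both routes work; the paper's buys you freedom from having to make the off-band tails quantitatively summable in $j$ (your integration-by-parts gain of $2^{-Nc\ell}$ per step is uniform in $j$ but does not by itself decay in the distance between a fixed Littlewood--Paley band and the band attached to scale $j$, so you would need to track that extra decay), at the cost of one additional, but easy, case in the kernel estimate. One concrete warning on the step you rightly flag as the main obstacle: you cannot import the exponents of Proposition~\ref{keyptwiseest} verbatim with ``$r=c\ell$''. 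There the thresholds are fractions of $j$ (the dichotomy at $\varepsilon(x,y)\le 2^{-\delta_2 j-100}$, the exceptional interval of length $\simeq 2^{(1+\delta_2-\nu')j}$), and the resulting Schur bounds contain factors such as $2^{\delta_2 j/2 - r/2}$, which with $r=c\ell$ blow up as soon as $j\gg\ell$ --- exactly the regime your unbounded range of scales allows. The fix, which is what the paper's proof of the lemma implements, is to recalibrate every threshold as a fraction of $\ell$ rather than of $j$ (dichotomy at $\varepsilon(x,y)\le 2^{-\delta\ell-100}$, exceptional set where $|\partial_t g|\le 2^{-j+\delta\ell}$ of measure $\lesssim 2^{j+(-c+2\delta)\ell}$), after which every term in the kernel bound \eqref{ttkernelbound} decays in $\ell$ uniformly in $j$. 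With that recalibration, and with the almost-orthogonality step written out carefully, your argument goes through.
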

	Before giving the proof let us see how this immediately yields the desired estimate for $S_2$. Note that
	\begin{multline}
		S_2\le \sum_{\ell\ge\frac{\nu}{c}-1}\bigg\|\sup_{ \lambda \in [-1/2,1/2)} \Big|\mathcal{F}^{-1}_{\mathbb{R}}\Big[\mathcal{F}_{\mathbb{R}}[f](\cdot) \cic{1}_{[1,2)}(\lambda^{\frac{1}{c}  }2^{j-\ell})  H^{\mathsf{i}}_{j(\lambda,\ell)}(\cdot,\lambda)) \cic{1}_{X_{j(\lambda,\ell)}^{c}}(\lambda)\Big]\Big|\bigg\|_{L^p(\mathbb{R})}
		\\
		\le\sum_{\ell\ge\frac{\nu}{c}-1}\Big\|\sup_{j\in\mathbb{Z}} \sup_{|\lambda|^{\frac{1}{c}}2^{j}\simeq 2^{\ell}} \Big| \mathcal{F}_{\mathbb{R}}^{-1}\big[\mathcal{F}_{\mathbb{R}}(f)(\cdot) H_j^{\mathsf{i}}(\cdot,\lambda)\big]\Big|\Big\|_{L^p(\mathbb{R})}
		\\\lesssim_{c,p} \sum_{\ell\ge\frac{\nu}{c}-1}2^{-\beta \ell}\|f\|_{L^p(\mathbb{R})}\lesssim_p \|f\|_{L^p(\mathbb{R})}\text{,}
	\end{multline}
	and the estimate for $S_2$ is complete. For the case $p=2$ we additionally note that by taking into account the second assertion of Lemma~$\ref{largeelllp}$ we get
\begin{equation}\label{EstS2L2}
S_2\lesssim (c-1)^{-\frac{1}{2}}\Big(\sum_{\ell\ge\frac{\nu}{c}-1}2^{-\frac{\ell}{6} }\Big)\|f\|_{L^2(\mathbb{R})}\le  (c-1)^{-\frac{1}{2}}\Big(\sum_{\ell\ge-1}2^{-\frac{\ell}{6} }\Big)\|f\|_{L^2(\mathbb{R})}\lesssim  (c-1)^{-\frac{1}{2}}\|f\|_{L^2(\mathbb{R})}\text{.}
\end{equation}
We now provide a proof of the above lemma.
	\begin{proof}[Proof of Lemma~$\ref{largeelllp}$]
		With an argument analogous to the one given in Section~5, one sees that it suffices to establish the second assertion of the lemma and specifically we prove that the $L^2$ norm of the following operator
\[
f \mapsto \int_{\R} f(x-t)e(\lambda_x[t]_{\mathsf{i}}^c) \psi^{\tau}_{j_x}(t) \d t
\] 
is controlled by $\lesssim (c-1)^{-\frac{1}{2}}2^{-\frac{\ell}{6}}$ for any choice of measurable functions $\lambda_{x}\colon\mathbb{Z}\to[0,\infty)$ and $j_x\colon\mathbb{Z}\to\mathbb{Z}$ with the property that $\lambda_x^{\frac{1}{c}} 2^{j_x} \simeq 2^{\ell} $, where the implied constants are absolute. The corresponding result on $L^p$ follows from interpolation with identical considerations with the ones of Section~5.
To that end we prove a pointwise bound on the kernel of the associated $TT^*$ operator defined by
\begin{equation}\label{kerneldefinition}
K_{\mathsf{i},\tau,\ell}(x,y)= \int_{\R} e(\lambda_x[x-t]_{\mathsf{i}}^c-\lambda_y[y-t]_{\mathsf{i}}^c) \psi^{\tau}_{j_x}(x-t) \psi^{\tau}_{j_y}(y-t) \d t\text{.}
\end{equation}
More precisely, we prove that for $\sigma\coloneqq \frac{1}{3}$ we have the following pointwise bound
\begin{multline}\label{ttkernelbound}
\left|K_{\mathsf{i},\tau,\ell}(x,y)\right| 
\\
\lesssim (c-1)^{-1}2^{-\max \left\{j_x,j_y\right\}-\sigma\ell} \cic{1}_{ 2^{\max \left\{j_x,j_y\right\}-\sigma\ell  } \lesssim |x-y| \lesssim  2^{\max \left\{j_x,j_y\right\}}}+2^{-\max \left\{j_x,j_y\right\}} \cic{1}_{|x-y| \lesssim 2^{\max \left\{j_x,j_y\right\}-\sigma\ell}} 
\text{.} \end{multline}
This establishes the desired $L^2$-bound with an argument as in \eqref{kerneltobound}.
		We have that
\[
\left|K_{\mathsf{i},\tau,\ell}(x,y)\right| \lesssim   2^{-\max \left\{j_x,j_y\right\}} \cic{1}_{|x-y| \lesssim 2^{\max \left\{j_x,j_y\right\}}}\text{,} \] 
and thus if $|x-y| \lesssim 2^{\max \left\{j_x,j_y\right\}-\sigma\ell }$ we have that \eqref{ttkernelbound} is satisfied. Therefore, we are going to assume that $|x-y| \gtrsim 2^{\max \left\{j_x,j_y\right\}-\sigma\ell }$. Without loss of generality let $x \leq y.$ We are also going to denote by $g$ the phase of the oscillatory integral we are interested in. Let us mention that all $t\in[x,y]$ may be discarded in the integration in \eqref{kerneldefinition} since $x-t$, $y-t$ will have different signs and the integrand will be zero. We begin with the case $|j_x-j_y| \geq 100$. We have \[\begin{split}
			& \partial_{t}g(t)=  \eps_{t,x} \lambda_xc|x-t|^{c-1}- \eps_{t,y} \lambda_{y}c |y-t|^{c-1}, \quad \text{with } |\eps_{t,x}|=|\eps_{t,y}|=1 \\ & \Rightarrow |\partial_{t}g| \gtrsim 2^{c\ell} 2^{- \min \left\{j_x,j_y\right\}}
			\end{split}  \]  Since there are $O(1)$  subintervals of $I_{x,y} \coloneqq \left\{ |x-t| \sim 2^{j_x}, |y-t| \sim 2^{j_y} \right\}$ on each of  which $\partial_tg$ is monotonous we may apply van der Corput's lemma to obtain \[ |K(x,y)| \lesssim  \frac{2^{-j_x-j_y}}{2^{c\ell-\min \left\{j_x,j_y\right\}}} \cic{1}_{|x-y| \lesssim 2^{\max \left\{j_x,j_y\right\}}}=2^{-c\ell-\max \left\{j_x,j_y\right\}} \cic{1}_{|x-y| \lesssim 2^{\max \left\{j_x,j_y\right\}}}. \] Therefore, from now on we may focus on the interesting case when the scales are dyadically the ``same'', namely when $|j_x-j_y| \leq 100$. We will repeatedly use the fact that $2^{j_x} \sim 2^{j_y}$. Without loss of generality we assume that $\lambda_x \leq \lambda_y$ and let us define
\[
\eps(x,y)=\Big(\frac{\lambda_y}{\lambda_x}\Big)^{\frac{1}{c-1}}-1\text{.}
\]
If $\eps(x,y) \leq 2^{-\frac{ \ell}{3}-100}$, then for $t \in I_{x,y} \setminus [x,y]$ we have \[ |\partial_t g(t)|=c \left| \lambda_x |t-x|^{c-1}-\lambda_{y} |t-y|^{c-1} \right| \simeq c(c-1)\xi^{c-2}  \left | \lambda_x^{\frac{1}{c-1}}|t-x|-\lambda_{y}^{\frac{1}{c-1}}|t-y| \right |\text{,}
\] for some $\xi$ between $\lambda_{y}^{\frac{1}{c-1}}|t-y|$ and $ \lambda_{x}^{\frac{1}{c-1}}|t-x|$. We also used the fact that in $I_{x,y} \setminus [x,y]$, $t-x$ and $t-y$ have the same sign. We also have  \[ \begin{split} 
			|\partial_t g(t)| &\gtrsim (c-1)\xi^{c-2} | \lambda_{x}^{\frac{1}{c-1}}(t-x)-\lambda_y^{\frac{1}{c-1}}(t-y)| \gtrsim (c-1)\lambda_x 2^{cj_x} 2^{-2j_x} | t-x- \left(\frac{\lambda_y}{\lambda_x}\right)^{\frac{1}{c-1}}(t-y)| \\ &  \gtrsim (c-1)2^{c \ell-2j_x }|t-x-(1+\eps(x,y))(t-y)| \gtrsim(c-1)  2^{c \ell-2j_x } 2^{j_x-\frac{\ell}{3} } \gtrsim (c-1)2^{(c-\frac{1}{3})\ell-j_x} \text{.}
		\end{split}  \] 
Van der Corput's lemma yields
\begin{multline}
\left|\int_{I_{x,y} \setminus [x,y]} e(\lambda_x[x-t]_{\mathsf{i}}^c-\lambda_y[y-t]_{\mathsf{i}}^c) \psi^{\tau}_{j_x}(x-t) \psi^{\tau}_{j_y}(y-t) \d t \right|
\\
\lesssim  (c-1)^{-1}\frac{2^{-2j_x}}{2^{-j_x+(c-\frac{1}{3})\ell}}=(c-1)^{-1}2^{-(c-\frac{1}{3})\ell-j_x}\le(c-1)^{-1}2^{-\frac{1}{3}\ell-j_x}\text{,}
\end{multline}
 which is a bound of the desired form.
		
		Finally, if $\eps(x,y) > 2^{-\frac{\ell}{3}-100}$ by recasting the above calculation we have that 
		\begin{multline}
			E_{x,y} \coloneqq \left\{ t \in I_{x,y} \setminus [x,y]: |\partial_t g(t)| \leq 2^{-j_x+\frac{\ell}{3}} \right\}
			\\
			\subseteq \left\{t \in I_{x,y} \setminus [x,y]:  2^{c \ell -2j_x}|y-x+\eps(x,y)(t-y)| \lesssim \big(c(c-1)\big)^{-1}2^{-j_x+\frac{\ell}{3}} \right\}
			\\
			\subseteq \left\{ t \in I_{x,y} \setminus [x,y]:  |y-x+\eps(x,y)(t-y)| \lesssim(c-1)^{-1} 2^{j_x+(-c+\frac{1}{3})\ell} \right\}\text{,}
		\end{multline}
		which yields that
$ |E_{x,y}| \lesssim (c-1)^{-1}2^{j_x+(-c+\frac{2}{3})\ell}\lesssim(c-1)^{-1}2^{j_x-\frac{1}{3}\ell}$. Therefore we can split the oscillatory integral on $I_{x,y} \setminus [x,y]$ as follows  \begin{multline}
\left|\int_{I_{x,y} \setminus [x,y]} e(\lambda_x[x-t]_{\mathsf{i}}^c-\lambda_y[y-t]_{\mathsf{i}}^c) \psi^{\tau}_{j_x}(x-t) \psi^{\tau}_{j_y}(y-t) \d t \right| \leq \left|\int_{E_{x,y}}\cdots \right|+\left| \int_{I_{x,y} \setminus E_{x,y}}\cdots \right| 
\\
\lesssim (c-1)^{-1} 2^{-2j_x+j_x-\frac{\ell}{3}}+(c-1)^{-1}\frac{2^{-2j_x}}{2^{-j_x+\frac{\ell}{3}}}
\lesssim (c-1)^{-1}2^{-j_x-\frac{1}{3}\ell}\text{,}\quad\text{as desired.}
\end{multline}
 The proof is complete.
\end{proof}
Let us note that with this lemma essentially yields Theorem~$\ref{Guo}$ on $L^2$ for $c\in(1,2)$. Since we would like to keep track of the dependencies of constants with respect to $c\in(1,2)$ on $L^2$ we quickly showcase how this is done while establishing the bound $C_{2,\varepsilon,\mathsf{i}}\lesssim (c-1)^{-\frac{1}{2}}$ we claimed earlier, see Theorem~$\ref{Guo}$ above.
\begin{lemma}\label{GuoConstant}
For every $c\in(1,2)$ and $\mathsf{i}\in\{0,1\}$ we have
\[
\big\|\mathcal{C}^{\mathsf{i}}_{c}f\big\|_{L^2(\mathbb{R})}\lesssim(c-1)^{-\frac{1}{2}}\|f\|_{L^2(\mathbb{R})}\text{,}
\]
where the operator $\mathcal{C}^{\mathsf{i}}_{c}$ is defined in \eqref{GuoOperator} and the implicit constant is absolute.
\end{lemma}
\begin{proof}
It suffices to establish the estimate for smooth compactly supported functions and we note that
\begin{multline}\label{splitforthecontinuous}
\int_{\mathbb{R}}f(x-t) \frac{e(\lambda [t]_{\mathsf{i}}^{c})}{t}\d t=\sum_{j \in \Z} \int_{\mathbb{R}}f(x-t) e(\lambda [t]_{\mathsf{i}}^{c}) \psi_j(t)\d t  
\\
=\sum_{j \in \Z} \sum_{r \in \Z} \cic{1}_{[2^r,2^{r+1})}(|\lambda|^{\frac{1}{c}}2^j ) \int_{\mathbb{R}}f(x-t) e(\lambda [t]_{\mathsf{i}}^{c}) \psi_j(t)\d t
=\sum_{r \in \Z} \int_{\R}f(x-t)e(\lambda \left[t\right]_{\mathsf{i}}^c) \psi_{j(r,\lambda)}(t) \d t
\\
= \bigg(\sum_{r> 0}+ \sum_{r\le 0}\bigg) \int_{\R}f(x-t)e(\lambda \left[t\right]_{\mathsf{i}}^c) \psi_{j(r,\lambda)}(t) \d t \coloneqq \mathsf{I}+\mathsf{II}\text{,}
\end{multline}
where $j(r,\lambda)$ is the unique $j\in\mathbb{Z}$ such that $2^r\le |\lambda|^{\frac{1}{c}}2^j<2^{r+1}$, i.e.:
\[
j(r,\lambda)= \big\lfloor\log_2 \big(2^r |\lambda|^{-\frac{1}{c}} \big)\big \rfloor= \big\lfloor r+\log_2\big(|\lambda|^{-\frac{1}{c}}\big)\big\rfloor=r+\big\lfloor \log_2\big(|\lambda|^{-\frac{1}{c}}\big)\big\rfloor\text{.}
\]

For the first summand in the last line of \eqref{splitforthecontinuous} we  have the following pointwise bound
\[
|\mathsf{I}| \le \sum_{r\in\mathbb{N}} \sup_{j\in\mathbb{Z}} \sup_{2^r\le|\lambda|^{\frac{1}{c}}2^{j}\le  2^{r+1}} \big|\mathcal{F}_{\mathbb{R}}^{-1}\big[\mathcal{F}_{\mathbb{R}}(f)(\cdot) H_j^{\mathsf{i}}(\cdot,\lambda)\big](x)\big|\text{,}
\]
and by Lemma~$\ref{largeelllp}$, we get that
\begin{equation}\label{Est2cont}
\|\mathsf{I}\|_{L^{2}(\mathbb{R})}\lesssim \sum_{r\in\mathbb{N}}(c-1)^{-\frac{1}{2}}2^{-\frac{r}{6}}\|f\|_{L^2(\mathbb{R})}\lesssim (c-1)^{-\frac{1}{2}}\|f\|_{L^2(\mathbb{R})}\text{.}
\end{equation}

It remains to treat the second summand, which we bound pointwise using the Hardy--Littlewood maximal function and maximal truncations of the Hilbert tranform. Firstly, note that 
\[ \left| \mathsf{II} \right|= \sum_{r \leq 0} \int_{\R}f(x-t)\left(e(\lambda \left[t\right]_{\mathsf{i}}^c )-1\right)\psi_{j(r,\lambda)}(t) \d t +\sum_{r \leq 0 } \int_{\R}f(x-t)\psi_{j(r,\lambda)}(t) \d t\text{.} 
\]
On the one hand, we have
\[ \Big|\sum_{r \leq 0} \int_{\R}f(x-t)\left(e(\lambda \left[t\right]_{\mathsf{i}}^c )-1\right)\psi_{j(r,\lambda)}(t) \d t \Big| \leq \sum_{r \leq 0} 2^{rc} \mathrm{M}f(x) \lesssim \mathrm{M}f(x)\text{,}
\]
and on the other hand, we have
\[
\sum_{r \leq 0 } \int_{\R}f(x-t)\psi_{j(r,\lambda)}(t) \d t=\sum_{j \leq \lfloor \log_2(|\lambda|^{-\frac{1}{c}})\rfloor  } \int_{\R}f(x-t) \psi_{j}(t) \d t \leq \mathcal{H}^*f(x)\text{,} 
\]
where $\mathcal{H}^*f(x)= \sup_{k \in \Z} \Big|\sum_{j \leq k} \int_{\R}f(x-t) \psi_j(t) \d t\Big|$. Since both $ \mathrm{M}$ and $ \mathcal{H}^*$ are bounded operators on $L^2$, we immediately get that $\|\mathsf{II}\|_{L^2(\mathbb{R})}\lesssim \|f\|_{L^2(\mathbb{R})}$.

Taking the above estimate into account together with \eqref{splitforthecontinuous} and \eqref{Est2cont} concludes the proof.
\end{proof}
\,\\
\textbf{Estimates for $S_3$.} Note that
	\begin{multline}
	\Big|	\mathcal{F}_{\mathbb{R}}^{-1}\Big[\sum_{j \leq 0} \mathcal{F}_{\mathbb{R}}[f](\cdot) H^{\mathsf{i}}_j(\cdot,\lambda) \Big](x)\Big| \leq  \Big|  \sum_{j\le 0}\int_{\mathbb{R}}(f*\mathcal{F}^{-1}_{\mathbb{R}}[\eta_1])(x-t)e(\lambda[t]_{\mathsf{i}}^c)\psi_j(t)\d t  \Big|
		\\
		\leq \Big|\sum_{j\le 0}\int_{\mathbb{R}}(f*\mathcal{F}^{-1}_{\mathbb{R}}[\eta_1])(x-t)\big(e(\lambda[t]_{\mathsf{i}}^c)-1\big)\psi_j(t)\d t \Big|+\Big|\sum_{j\le 0}\int_{\mathbb{R}}(f*\mathcal{F}^{-1}_{\mathbb{R}}[\eta_1])(x-t)\psi_j(t)\d t \Big| \text{.}
	\end{multline}
	For the first summand we note that for every $x\in\mathbb{R}$ we have
	\[
	\begin{split}
		&    \sup_{\lambda\in[-1/2,1/2)}\Big|\sum_{j\le 0}\int_{\mathbb{R}}(f* \mathcal{F}_{\mathbb{R}}^{-1}[\eta_1])(x-t)\big(e(\lambda [t]_{\mathsf{i}}^c)-1\big)\psi_j(t)\d t \Big|
		\\
		&\lesssim\sup_{\lambda\in[-1/2,1/2)}  \sum_{j\le 0} \int_{\mathbb{R}}|f* \mathcal{F}_{\mathbb{R}}^{-1}[\eta_1]|(x-t)|\lambda| |t|^c|\psi_j(t)|\d t
		\\
		&\lesssim\sum_{j \leq 0}2^{jc}\int_{\mathbb{R}}|f* \mathcal{F}_{\mathbb{R}}^{-1}[\eta_1]|(x-t)|\psi_j(t)| \d t \le\sum_{j \le 0} 2^{jc} M\big(f* \mathcal{F}_{\mathbb{R}}^{-1}[\eta_1]\big)(x)\lesssim M\big(f* \mathcal{F}_{\mathbb{R}}^{-1}[\eta_1]\big)(x)\text{,}
	\end{split}
	\]
	and one may use this pointwise estimate and the $L^p$-boundedness of the Hardy--Littlewood maximal operator to conclude.
	
	For the second term we note that one may control it by the maximal truncations of the Hilbert transform and therefore conclude in a similar manner. This is standard and we chose not to give full details here. We briefly mention that if we let
	\[
	\mathcal{H}_0 f(x)\coloneqq\sum_{j \le 0}\int_{\mathbb{R}}f(t)\psi_j(x-t)\d t
	\]
	and $\mathcal{H}$ be the Hilbert transform, by Cotlar's inequality we can see that
	\[
	|\mathcal{H}_0 f(x)| \lesssim M f(x)+M(\mathcal{H}f)(x)
	\]
	and conclude, see for example the proof of Lemma~6.20 in \cite{ParissisHa}. The estimate for $S_3$ is complete and so is the proof of Proposition~$\ref{ContRedExpl}$.
\end{proof}
Let us mention that by combining Lemma~$\ref{largeelllp}$, Lemma~$\ref{GuoConstant}$, as well as the fact that our handling of $S_3$ clearly yields constants independent of $c$ we get
\begin{equation}\label{L2ContEst}
\bigg\|\sup_{\lambda\in [-1/2,1/2)}\Big|\mathcal{F}^{-1}_{\mathbb{R}}\Big[\sum_{j\in\mathbb{N}}\mathcal{F}_{\mathbb{R}}[f](\cdot)H_j^{\mathsf{i}}(\cdot,\lambda)\cic{1}_{X_j}(\lambda)\Big]\Big|\bigg\|_{L^2(\mathbb{R})}\lesssim (c-1)^{-\frac{1}{2}}\|f\|_{L^2(\mathbb{R})}\text{,}
\end{equation}
and returning to \eqref{Sec5absconstant}, we get that
\begin{equation}\label{FinConstantEst}
\sup_{f\in\ell^2:\,\|f\|_{\ell^2}\le 1}\bigg\|\sup_{\lambda \in [-1/2,1/2)}\Big| \sum_{n \neq 0}f(x-n) \frac{e(\lambda \mathsf{sign}^{\mathsf{i}}(n) \lfloor |n|^{c} \rfloor)}{n}\Big|\bigg\|_{{\ell_{\d x}^2}}
\lesssim (c-1)^{-\frac{1}{2}}(2-c)^{-\frac{13}{4}}\text{,}\quad\text{as claimed earlier.}
\end{equation}

\end{document}